\newtheorem{thm}{Theorem}[section]
\newtheorem{cor}[thm]{Corollary}
\newtheorem{prop}[thm]{Proposition}
\newtheorem{quest}[thm]{Question}
\theoremstyle{definition}
\newtheorem{defn}[thm]{Definition}
\newtheorem{defns}[thm]{Definitions}
\theoremstyle{remark}
\newtheorem{rem}[thm]{Remark}
\numberwithin{equation}{section}
\title{Integer Complexity: Representing numbers of bounded defect}
\author{Harry Altman}
\date{March 6, 2016}
\begin{document}

\newcommand{\cpx}[1]{\|#1\|}
\newcommand{\dft}{\delta}
\newcommand{\st}{{st}}
\newcommand{\xpdd}[1]{\hat{#1}}
\newcommand{\drop}{\Delta}
\newcommand{\badfac}{\kappa}
\newcommand{\cR}{R}
\newcommand{\acl}{\ell}
\newcommand{\exnum}{1.92}

\newcommand{\NUM}{{48}}
\newcommand{\TWONUM}{{96}}

\newcommand{\N}{{\mathbb N}}
\newcommand{\R}{{\mathbb R}}
\newcommand{\Z}{{\mathbb Z}}
\newcommand{\Q}{{\mathbb Q}}
\newcommand{\sS}{{\mathcal S}}
\newcommand{\sT}{{\mathcal T}}

\newcommand{\floor}[1]{{\lfloor #1 \rfloor}}
\newcommand{\ceil}[1]{{\lceil #1 \ceil}}

\begin{abstract}
Define $\cpx{n}$ to be the \emph{complexity} of $n$, the smallest number of ones
needed to write $n$ using an arbitrary combination of addition and
multiplication.  John Selfridge showed that $\cpx{n}\ge 3\log_3 n$ for all $n$.
Based on this, this author and Zelinsky defined \cite{paper1} the ``defect'' of
$n$, $\dft(n):=\cpx{n}-3\log_3 n$, and this author showed that the set of all
defects is a well-ordered subset of the real numbers \cite{paperwo}.  This was
accomplished by showing that for a fixed real number $r$, there is a finite set
$S$ of polynomials called ``low-defect polynomials'' such that for any $n$ with
$\dft(n)<r$, $n$ has the form $f(3^{k_1},\ldots,3^{k_r})3^{k_{r+1}}$ for some
$f\in S$.  However, using the polynomials produced by this method, many
extraneous $n$ with $\dft(n)\ge r$ would also be represented.  In this paper we
show how to remedy this and modify $S$ so as to represent precisely the $n$ with
$\dft(n)<r$ and remove anything extraneous.  Since the same polynomial can
represent both $n$ with $\dft(n)<r$ and $n$ with $\dft(n)\ge r$, this is not a
matter of simply excising the appropriate polynomials, but requires
``truncating'' the polynomials to form new ones.
\end{abstract}

\maketitle

\section{Introduction}
\label{intro}

The \emph{complexity} of a natural number $n$ is the least number of $1$'s
needed to write it using any combination of addition and multiplication, with
the order of the operations specified using  parentheses grouped in any legal
nesting.  For instance, $n=11$ has a complexity of $8$, since it can be written
using $8$ ones as
\[ 11=(1+1+1)(1+1+1)+1+1, \] but not with any fewer than $8$.  This notion was
implicitly introduced in 1953 by Kurt Mahler and Jan Popken \cite{MP}; they
actually considered an inverse function, the size of the largest number
representable using $k$ copies of the number $1$.  (More generally, they
considered the same question for representations using $k$ copies of a positive
real number $x$.) Integer complexity was explicitly studied by John Selfridge,
and was later popularized by Richard Guy \cite{Guy, UPINT}.  Following J. Arias
de Reyna \cite{Arias} we will denote the complexity of $n$ by $\cpx{n}$.

Integer complexity is approximately logarithmic; it satisfies the bounds
\begin{equation*}\label{eq1}
3 \log_3 n= \frac{3}{\log 3} \log  n\le \cpx{n} \le \frac{3}{\log 2} \log n  ,\qquad n>1.
\end{equation*}
The lower bound can be deduced from the result of Mahler and Popken, and was
explicitly proved by John Selfridge \cite{Guy}. It is attained with equality for
$n=3^k$ for all $k \ge1$.  The upper bound can be obtained by writing $n$ in
binary and finding a representation using Horner's algorithm. It is not sharp,
and the constant $\frac{3}{\log2} $ can be improved for large $n$ \cite{upbds}.

Based on the above, this author and Zelinsky defined the \emph{defect} of $n$:

\begin{defn}
The \emph{defect} of $n$, denoted $\dft(n)$ is defined by
\[ \dft(n) := \cpx{n} - 3\log_3 n. \]
\end{defn}

The defect has proven to be a useful tool in the study of integer complexity.
For instance, one outstanding question regarding integer complexity, raised by
Guy \cite{Guy}, is that of the complexity of $3$-smooth numbers; is $\cpx{2^n
3^k}$ always equal to $2n+3k$, whenever $n$ and $k$ are not both zero?  This
author and Zelinsky used the study of the defect to show in \cite{paper1} that
this holds true whenever $n\le 21$.

This was accomplished by means of a method for,
given a real number $s$, determining restrictions on what natural numbers $n$
could have $\dft(n)<s$.  They defined:

\begin{defn}
For a real number $s\ge0$, the set $A_s$ is the set of all natural numbers with
defect less than $s$.
\end{defn}

The method worked by first choosing a ``step size'' $\alpha\in(0,1)$, and then
recursively building up coverings for the sets $A_\alpha, A_{2\alpha},
A_{3\alpha}, \ldots$; obviously, any $A_s$ can be reached this way.
Using this method, one can show:

\begin{thm}[Covering theorem]
\label{oldmainthmfront}
For any real $s\ge 0$, there exists a finite set $\sS_s$ of multilinear
polynomials such that for any natural number $n$ satisfying $\dft(n)<s$,
there is some $f\in\sS_s$ and some nonnegative integers
$k_1,\ldots,k_{r+1}$
such that $n=f(3^{k_1},\ldots,3^{k_r})3^{k_{r+1}}$.  In other words, given $s$
one can find $\sS_s$ such that
\[ \{ n : \dft(n)<s \} \subseteq \bigcup_{f\in\sS_s}
\{f(3^{k_1},\ldots,3^{k_r})3^{k_{r+1}} : k_1,\ldots,k_{r+1}\ge 0 \}. \]
\end{thm}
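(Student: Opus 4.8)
The plan is to prove the more precise statement that $\sS_s$ can be taken to consist of \emph{low-defect polynomials}: multilinear polynomials with nonnegative integer coefficients built from the constant polynomial $1$ by the two operations $(f,g)\mapsto fg$ (on disjoint sets of variables) and $f(x_1,\dots,x_r)\mapsto f(x_1,\dots,x_r)\,x_{r+1}+c$ for $c\in\N$. To each such $f$ one attaches a ``recorded complexity'' $C_f$, the number of $1$'s used in building it, and shows by induction on the construction that $\cpx{f(3^{k_1},\dots,3^{k_r})}\le C_f+3(k_1+\cdots+k_r)$ for all $k_i\ge0$; hence $\dft(f(3^{k_1},\dots,3^{k_r}))$ is bounded above by a constant depending only on $f$. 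We allow $\sS_s$ to over-cover --- representing some $n$ with $\dft(n)\ge s$ as well, which is exactly what this paper later corrects --- so no upper bound on the recorded defects is demanded, only that $\sS_s$ be finite.

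I would first reduce to \emph{leaders}, calling $n$ a leader when it is not of the form $3m$ with $\cpx{n}=\cpx{m}+3$: every $n$ equals $3^{k_{r+1}}m$ with $m$ a leader and $\dft(n)=\dft(m)$, and the trailing factor $3^{k_{r+1}}$ is precisely the one allowed in the statement. Fix a step size $\alpha$ with $0<\alpha<\epsilon_0$, where $\epsilon_0>0$ is a lower bound for the defect of any number that is not a power of $3$ (for instance $\epsilon_0=2-3\log_3 2$, by \cite{paper1}), and prove by induction on $j$ that finitely many low-defect polynomials cover all leaders in $A_{j\alpha}$; since $A_s\subseteq A_{\lceil s/\alpha\rceil\,\alpha}$ this yields the theorem, and the base case is trivial.

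For the inductive step, assume a finite cover of $A_t$ is in hand and let $n$ be a leader with $t\le\dft(n)<t+\alpha$. All but finitely many small $n$ may be assumed arbitrarily large, and these exceptions we simply adjoin to the cover; for the rest, fix an optimal representation, whose outermost operation (as $n>1$) is a multiplication or an addition. If it is a multiplication, write $n=3^{k}a_1\cdots a_p$ with each $a_i$ not a power of $3$; since this is a genuine product, $p\ge2$, and $\dft(n)=\sum_i\dft(a_i)$ forces $p\le\dft(n)/\epsilon_0$ and each $\dft(a_i)\le\dft(n)-\epsilon_0<t$ (using $\alpha\le\epsilon_0$), so each $a_i$ lies in $A_t$ and $n$ is covered by a product of the corresponding polynomials times a monomial. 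If it is an addition, decompose the top-level sum; after grouping, $n=a+c$ where $c$ has complexity bounded in terms of $s$ alone (the $1$'s, and more generally the small summands, contribute only boundedly much to an optimal representation) and $a$ is a single ``large'' piece. The identity $\dft(n)=\dft(a)+\dft(c)+3\log_3\frac{ac}{a+c}$ --- and in the extreme case $c=1$ the sharper $\dft(a)=\dft(n)-1+3\log_3(1+1/a)$ --- shows that once $a$ exceeds an absolute threshold one has $\dft(a)<t$; thus $a$ (with its powers of $3$ stripped off) lies in $A_t$, is covered by the inductive hypothesis, and $n=a\cdot3^{e}+c$ is covered by appending one ``$\cdot\,x_{r+1}+c$'' step to the corresponding polynomial. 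Since $c$ ranges over a finite set and $a$ over finitely many polynomials, only finitely many new polynomials arise, and the induction goes through.

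The main obstacle is the additive case, in two respects. First, one must justify that the top-level sum of an optimal representation really does reduce to ``one $A_t$-piece plus a remainder of bounded complexity'' --- controlling how many, and how large, the extra summands can be in an optimal representation of a number of small defect. Second, when the remainder is as small as $1$ the defect need not drop at all under the naive recursion, so one genuinely needs the sharp logarithmic estimate above to push $\dft(a)$ below $t$ for large $a$, and must peel off the finitely many small $a$ by hand; this is what dictates the choice $\alpha<\epsilon_0$ and the finite list of exceptional $n$. The remaining ingredients --- the defect bookkeeping for the construction of low-defect polynomials, and the boundedness of $p$ --- are routine, and assembling them produces the finite set $\sS_s$.
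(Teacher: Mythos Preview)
Your approach is essentially the one the paper invokes: this theorem is not proved in the present paper but is quoted from \cite{paper1} and \cite{paperwo}, where exactly the step-size induction you describe---choose $\alpha\in(0,1)$, reduce to leaders via Proposition~\ref{arbr}, and build covers of $B_\alpha,B_{2\alpha},B_{3\alpha},\ldots$---is carried out. The paper only sketches this in Section~\ref{buildsec} (Theorems~\ref{finite} and~\ref{oldmainthm}) and restates the full inductive step in the Appendix.

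Your simplification of taking $\alpha<\epsilon_0=\dft(2)$ rather than $\alpha<1$ is legitimate and does make the multiplicative case as clean as you present it: with that choice, every non--power-of-$3$ factor contributes at least $\alpha$ to the defect, so each factor drops into the previous level. The trade-off is that \cite{paper1} works harder to allow any $\alpha<1$, which is why the actual case split (see the Appendix here) has five cases rather than your two, including the mixed forms $n=(a+b)v$ and the exceptional set $T_\alpha$; your coarser step size collapses these.

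The one place your outline is genuinely thin is the additive case, and you flag it yourself. The reduction to ``one large summand plus a remainder of bounded complexity'' is not automatic from ``take the top-level sum''; what \cite{paper1} actually proves is that if $n=a+b$ is optimal with $a\ge b$, then $\dft(a)+\cpx{b}\le\dft(n)+3\log_3 2$, which bounds $\cpx{b}$ in terms of $s$ and forces $b$ into a finite set, while your displayed identity then pushes $\dft(a)<t$ once $a$ is large. Filling that in is the substantive work; the rest of your plan is correct and matches the source.
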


It actually proved more: in particular, the polynomials in
Theorem~\ref{oldmainthmfront} are not arbitrary multilinear polynomials, but
are of a specific form, for which \cite{paperwo} introduced the term
\emph{low-defect polynomials}.  In particular, low-defect polynomials are in
fact \emph{read-once polynomials}, as considered in \cite{ROF} for instance.
See Sections~\ref{review} and \ref{structure} for more on these polynomials.

This sort of theorem is more powerful than it may appear; for instance, one can
use it to show that the defect has unusual order-theoretic properties
\cite{paperwo}:

\begin{thm}(Defect well-ordering theorem)
\label{wothm}
The set $\{ \dft(n) : n \in \mathbb{N} \}$, considered as a subset of the real
numbers, is well-ordered and has order type $\omega^\omega$.
\end{thm}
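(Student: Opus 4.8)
The plan is to establish the two bounds $\operatorname{ot}\bigl(\{\dft(n):n\in\N\}\bigr)\le\omega^\omega$ and $\operatorname{ot}\bigl(\{\dft(n):n\in\N\}\bigr)\ge\omega^\omega$ separately; the upper bound is where the Covering Theorem (Theorem~\ref{oldmainthmfront}) does its work, while the lower bound comes from explicit families of integers. Two soft observations set up the upper bound. First, for any real $s$ the set $\{\dft(n):\dft(n)<s\}$ is an initial segment of the ordered set $\{\dft(n):n\in\N\}$, and the latter is the increasing union over $s=1,2,3,\ldots$ of these initial segments, so its order type is the supremum of theirs. Second, for fixed $m$ the sequence $\dft(3^{j}m)$ is non-increasing in $j$ and its consecutive differences are non-negative integers summing to the finite quantity $\dft(m)-\lim_j\dft(3^{j}m)$, hence it is eventually constant. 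Now by Theorem~\ref{oldmainthmfront}, $\{\dft(n):\dft(n)<s\}$ is contained in the finite union $\bigcup_{f\in\sS_s}D_f$, where for a low-defect polynomial $f$ in $r$ variables I write
\[ D_f:=\bigl\{\dft\bigl(f(3^{k_1},\ldots,3^{k_r})3^{k_{r+1}}\bigr):k_1,\ldots,k_{r+1}\ge 0\bigr\}. \]
Since a finite union of well-ordered sets of order type below $\omega^\omega$ again has order type below $\omega^\omega$ (the natural sum of two ordinals below $\omega^\omega$ stays below it), the first observation reduces the upper bound to the claim that $\operatorname{ot}(D_f)<\omega^\omega$ for every low-defect polynomial $f$---in fact I would aim for $\operatorname{ot}(D_f)<\omega^{r+1}$.

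I would prove this by induction on $r$. The base case $r=0$ ($f$ a constant) is finite by the second observation. For the inductive step, suppose for contradiction that $D_f$ contained a strictly decreasing sequence $\dft(n_1)>\dft(n_2)>\cdots$; write each $n_m=f(3^{k_1},\ldots,3^{k_r})3^{k_{r+1}}$ (the $k_i$ depending on $m$). By Dickson's lemma, after passing to a subsequence we may assume the vectors $(k_1,\ldots,k_r)$ are coordinatewise non-decreasing in $m$. If some coordinate stays bounded, it is eventually constant, say the $j$-th exponent is always $v$; but substituting $x_j\mapsto 3^{v}$ turns $f$ into a low-defect polynomial $f'$ in $r-1$ variables, so the tail of the sequence lies in $D_{f'}$, which is well-ordered by the inductive hypothesis---contradicting strict decrease. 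So every one of the first $r$ exponents tends to $\infty$, and here one invokes the structural theory of low-defect polynomials: once all of $k_1,\ldots,k_r$ are large enough, the obvious representation of $f(3^{k_1},\ldots,3^{k_r})3^{k_{r+1}}$ is optimal, so its defect equals $\cpx{f}-3\log_3 F$, where $F=F(k_1,\ldots,k_r)=f(3^{k_1},\ldots,3^{k_r})/3^{k_1+\cdots+k_r}$ is independent of $k_{r+1}$. Since $F$ is a sum of terms $c_J\prod_{j\notin J}3^{-k_j}$, one per monomial $\prod_{j\in J}x_j$ of $f$, it is non-increasing in each $k_j$, so this defect is coordinatewise \emph{non-decreasing} and cannot strictly decrease along a coordinatewise non-decreasing sequence---a contradiction. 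The same monotonicity exhibits the ``all-large'' defects as the image of a coordinatewise-monotone map on $\N^r$, hence a well-ordered set of order type $\le\omega^{r}$; together with the finitely many ``bounded-coordinate'' slices (each of order type $<\omega^{r}$ by induction) this yields $\operatorname{ot}(D_f)<\omega^{r+1}$. I expect this inductive step---and, beneath it, establishing the structural facts about low-defect polynomials that it uses (that their natural representations are eventually optimal, and that specializing a variable to a power of $3$ again gives a low-defect polynomial)---to be the main obstacle; it is the technical core of the whole approach.

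For the lower bound, I would exhibit for each $j\ge 1$ a family of integers whose defects have order type at least $\omega^j$. Put $m_1=3^{k_1}+1$ and $m_{t+1}=3^{k_{t+1}}m_t+1$, so that $m_j$ has leading term $3^{K_j}$ with $K_j=k_1+\cdots+k_j$ and $m_j/3^{K_j}=1+\sum_{i=1}^{j}3^{-(k_1+\cdots+k_i)}$. Using known lower bounds on integer complexity---for instance $\cpx{3^k+1}=3k+1$ for $k\ge1$ together with the fact that, for these particular numbers, multiplying by $3^{k}$ and then adding $1$ costs exactly $3k+1$ more (estimates of the type proved in \cite{paper1})---one gets $\cpx{m_j}=3K_j+j$, hence
\[ \dft(m_j)=j-3\log_3\Bigl(1+\sum_{i=1}^{j}3^{-(k_1+\cdots+k_i)}\Bigr). \]
For fixed $k_1,\ldots,k_{j-1}$ and $k_j\to\infty$ this is a strictly increasing sequence with limit $\dft(m_{j-1})+1$; so, after passing if necessary to a sufficiently sparse subfamily so that the ``blocks'' indexed by the values of $\dft(m_{j-1})$ are correctly nested, an induction on $j$ produces a subset of order type $\omega^j$. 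As $j$ was arbitrary, $\operatorname{ot}\bigl(\{\dft(n):n\in\N\}\bigr)\ge\sup_j\omega^j=\omega^\omega$, and combined with the upper bound this gives $\operatorname{ot}\bigl(\{\dft(n):n\in\N\}\bigr)=\omega^\omega$.
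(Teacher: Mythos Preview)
This theorem is not proved in the present paper; it is quoted from \cite{paperwo}, so there is no proof here to compare against directly. Your outline follows the same architecture as the argument there (upper bound via the covering theorem and monotonicity of $\dft_{f,C}$; lower bound via explicit nested families), but the inductive step for the upper bound has a real gap. You assert that ``once all of $k_1,\ldots,k_r$ are large enough, the obvious representation of $f(3^{k_1},\ldots,3^{k_r})3^{k_{r+1}}$ is optimal,'' i.e.\ that $\cpx{f(3^{k_1},\ldots,3^{k_r})3^{k_{r+1}}}=\cpx{f}+3\sum k_i$ eventually. This is not established in \cite{paper1} or \cite{paperwo}, and there is no reason to expect it to be easy: it is a \emph{lower} bound on complexity for an infinite family. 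Fortunately it is unnecessary. What \emph{is} known (Proposition~\ref{dftbd}) is that the actual defect differs from $\dft_{f,C}(k_1,\ldots,k_r)$ by a non-negative integer, and both lie in $[0,\dft(f,C))$; along your Dickson subsequence this integer gap takes only finitely many values, so on a further subsequence it is constant, and then $\dft(n)$ inherits the monotonicity of $\dft_{f,C}$ (Proposition~\ref{supdfts}), contradicting strict decrease. The same device handles the order-type estimate without ever invoking eventual optimality.

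There is a second, smaller gap: you claim that substituting $x_j\mapsto 3^v$ into $f$ yields a low-defect polynomial in $r-1$ variables. By Proposition~\ref{dirtrunc} this is guaranteed only when $x_j$ is \emph{minimal} in the nesting order; the paper gives the explicit counterexample $(3x_1+1)(3x_2+1)x_3+1$ with $x_3\mapsto 1$, which is not a low-defect polynomial. Your Dickson argument gives no control over which coordinate stays bounded. One repair is to run the induction over all multilinear polynomials with positive integer coefficients, since the monotonicity and integer-gap arguments survive in that generality; another is to bypass the induction by using the degree bound already present in Theorem~\ref{oldmainthm}, which is closer to how \cite{paperwo} actually proceeds. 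Finally, your lower bound leans on the exact equality $\cpx{m_j}=3K_j+j$ for all $k_1,\ldots,k_j\ge 1$; this is stronger than what \cite{paper1} supplies directly, and in \cite{paperwo} the $\ge\omega^j$ bound is obtained more carefully rather than from a blanket complexity formula.
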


But while this theorem gave a way of representing a covering of $A_s$, this
covering could include extraneous numbers not actually in $A_s$.  In this paper
we remedy this deficiency, and show that the sets $A_s$ themselves can be
described by low-defect polynomials, rather than low-defect polynomials
merely describing a covering for each $A_s$.

In order to establish this result, we introduce a way of ``truncating'' a
low-defect polynomial $f$ to a given defect $s$, though this replaces one
polynomial $f$ by a finite set of low-defect polynomials $\{g_1,\ldots,g_k\}$.
If we truncate every polynomial in the set $\sS_s$ to the defect $s$, we obtain
a set $\sT_s$ of low-defect polynomials so that for any natural number $n$,
$\dft(n)<s$ if and only if $n=f(3^{k_1},\ldots,3^{k_r})3^{k_{r+1}}$ for some
$f\in \sT_s$ and some $k_1,\ldots,k_{r+1}$.  So as stated above we are no longer
merely covering the set $A_r$, but representing it exactly.
Our main result is as follows. 

\begin{thm}[Representation theorem]
\label{mainthmfront}
For any real $s\ge 0$, there exists a finite set $\sT_s$ of multilinear
polynomials such that a natural number $n$ satisfies $\dft(n)<s$ if and only if
there is some $f\in\sT_s$ and some nonnegative integers $k_1,\ldots,k_{r+1}$
such that $n=f(3^{k_1},\ldots,3^{k_r})3^{k_{r+1}}$.
In other words, given $s$
one can find $\sT_s$ such that
\[ \{ n : \dft(n)<s \} = \bigcup_{f\in\sT_s}
\{f(3^{k_1},\ldots,3^{k_r})3^{k_{r+1}} : k_1,\ldots,k_{r+1}\ge 0 \}. \]
\end{thm}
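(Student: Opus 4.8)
The plan is to combine Theorem~\ref{oldmainthmfront} with a ``truncation'' operation applied to individual low-defect polynomials, so I first recall what I need from Sections~\ref{review} and~\ref{structure}. A low-defect polynomial $f$ of degree $r$ carries a base complexity $C_f$ with $\cpx{f(3^{k_1},\ldots,3^{k_r})}\le C_f+3(k_1+\cdots+k_r)$ for all $k_i\ge0$; its coefficients are nonnegative, the coefficient $c$ of $x_1\cdots x_r$ is positive, and $\dft(f):=C_f-3\log_3 c$. Writing $n=f(3^{k_1},\ldots,3^{k_r})3^{k_{r+1}}$ and
\[ D_f(k_1,\ldots,k_r):=C_f+3(k_1+\cdots+k_r)-3\log_3 f(3^{k_1},\ldots,3^{k_r}), \]
one has $\dft(n)\le D_f(k_1,\ldots,k_r)\le\dft(f)$ for all $k_1,\ldots,k_{r+1}\ge0$, with $\dft(n)=D_f(k_1,\ldots,k_r)$ exactly when $\cpx{n}=C_f+3(k_1+\cdots+k_{r+1})$; in that case I say that $f$ \emph{efficiently} represents $n$. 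Beyond Theorem~\ref{oldmainthmfront} itself, I will use that its set $\sS_s$ may be chosen so that every $n$ with $\dft(n)<s$ is \emph{efficiently} represented by some $f\in\sS_s$ --- this is what the construction of \cite{paperwo} produces, the polynomial attached to $n$ there being abstracted from an optimal expression for $n$. Granting this,
\[ A_s=\bigcup_{f\in\sS_s}\{\,f(3^{k_1},\ldots,3^{k_r})3^{k_{r+1}} : D_f(k_1,\ldots,k_r)<s,\ k_1,\ldots,k_{r+1}\ge0\,\}, \]
where $\subseteq$ is efficiency and $\supseteq$ holds because $\dft(n)\le D_f(k_1,\ldots,k_r)<s$ on the right-hand side.

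It therefore suffices to show: for a single low-defect polynomial $f$ and a real $s>0$ (the case $s=0$ is trivial, since $A_0=\emptyset$), the corresponding set on the right is a finite union of value sets of low-defect polynomials, all of whose values have defect $<s$; then $\sT_s$ is the union over $f\in\sS_s$ of all the polynomials so produced. The crux is that $R:=\{(k_1,\ldots,k_r)\in\N^r:D_f(k_1,\ldots,k_r)<s\}$ is \emph{downward closed}. Indeed, expanding $f(3^{k_1},\ldots,3^{k_r})=\sum_S a_S\prod_{i\in S}3^{k_i}$ with $a_S\ge0$, $a_{\{1,\ldots,r\}}=c$, and dividing through by $c\,3^{k_1+\cdots+k_r}$ gives
\[ D_f(k_1,\ldots,k_r)=\dft(f)-3\log_3\Big(1+\sum_{S\subsetneq\{1,\ldots,r\}}\frac{a_S}{c}\,3^{-\sum_{i\notin S}k_i}\Big); \]
each summand in the argument of the logarithm is nonincreasing in every $k_i$, so $D_f$ is nondecreasing in every $k_i$, and $R$ is downward closed (when $\dft(f)<s$ one simply gets $R=\N^r$). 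Now a downward-closed subset of $\N^r$ is a finite union of boxes $\prod_{i=1}^r[0,T_i]$ with each $T_i\in\N\cup\{\infty\}$: its complement is an upward-closed set, hence by Dickson's Lemma has finitely many minimal elements $m^{(1)},\ldots,m^{(p)}$, and distributing the intersection in $R=\bigcap_j\bigcup_i\{k_i<m^{(j)}_i\}$ over the union exhibits $R$ as such a union.

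Fix a box $\prod_{i=1}^r[0,T_i]\subseteq R$ and set $B=\{i:T_i=\infty\}$. For each tuple $\vec c=(c_i)_{i\notin B}$ with $0\le c_i\le T_i$, let $f_{B,\vec c}$ be obtained from $f$ by the substitution $x_i=3^{c_i}$ for $i\notin B$; this is multilinear, and by the closure properties of Section~\ref{structure} it is a low-defect polynomial. Crucially it inherits a base complexity: for any $(j_i)_{i\in B}$, the value $f_{B,\vec c}((3^{j_i})_{i\in B})$ equals $f(3^{\tilde k_1},\ldots,3^{\tilde k_r})$ with $\tilde k_i=c_i$ for $i\notin B$ and $\tilde k_i=j_i$ for $i\in B$, so $C_f+3\sum_{i\notin B}c_i$ is a legitimate base complexity for $f_{B,\vec c}$, and with this choice $D_{f_{B,\vec c}}((j_i)_{i\in B})=D_f(\tilde k_1,\ldots,\tilde k_r)<s$ since the tuple lies in the box. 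Hence every value of every $f_{B,\vec c}$ has defect $\le D_{f_{B,\vec c}}<s$. Ranging over all boxes of $R$ and all $\vec c$ produces finitely many $f_{B,\vec c}$, whose value sets have union exactly $\{f(3^{k_1},\ldots,3^{k_r})3^{k_{r+1}}:D_f(k_1,\ldots,k_r)<s\}$, because every exponent tuple with $D_f<s$ lies in some box. Taking $\sT_s$ to be the union over $f\in\sS_s$ of all these $f_{B,\vec c}$ now yields the theorem: every value of a member of $\sT_s$ has defect $<s$, and conversely if $\dft(n)<s$ then $n$ is efficiently represented by some $f\in\sS_s$, so its exponent tuple lies in $R$, hence in some box, and $n$ is a value of the corresponding $f_{B,\vec c}$.

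I anticipate two difficulties. The more routine is the substitution step: verifying through the structure theory that specializing the variables of a low-defect polynomial to fixed powers of $3$ (including to $1$) again gives a low-defect polynomial, and keeping the bookkeeping of $C_f$, $D_f$, and the trailing factor $3^{k_{r+1}}$ consistent --- in particular that $C_f+3\sum_{i\notin B}c_i$ is genuinely a valid base complexity for $f_{B,\vec c}$, so that $D_{f_{B,\vec c}}$ never exceeds $D_f$. The essential point, and the one most likely to require real work, is the efficiency strengthening of Theorem~\ref{oldmainthmfront}: if the covering of \cite{paperwo} does not literally assign each $n\in A_s$ a representation with $\cpx{n}=C_f+3\sum k_i$, one must re-run that construction while tracking optimal expressions, for without efficiency the region $R$ could miss some $n\in A_s$ that the polynomials of $\sS_s$ represent only wastefully.
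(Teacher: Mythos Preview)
Your strategy is sound and runs parallel to the paper's: start from an \emph{efficient} covering set (this is exactly what Theorem~\ref{oldmainthm} supplies, so your second anticipated difficulty is already handled), observe that $D_f$ is coordinatewise nondecreasing so $R=\{D_f<s\}$ is downward closed, and then replace $f$ by finitely many specializations whose values exhaust $R$. The Dickson/box decomposition is a clean alternative to the paper's recursion, and for Theorem~\ref{mainthmfront} as stated (which asks only for \emph{multilinear} polynomials) your argument goes through.

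There is one genuine error, though it turns out to be harmless here. Your claim that substituting powers of $3$ into an arbitrary subset of the variables of a low-defect polynomial again yields a low-defect polynomial is \emph{false}; the paper gives the counterexample $(3x_1+1)(3x_2+1)x_3+1$, where setting $x_3=1$ produces $9x_1x_2+3x_1+3x_2+2$, which is not a low-defect polynomial at all (and setting $x_2=3$ in $(3x_1+1)x_2+1$ gives $9x_1+4$, which is a polynomial but not with a valid base complexity of $8$). What you flagged as the ``more routine'' difficulty is thus not routine: it simply fails. Fortunately your proof does not need it. Multilinearity of $f_{B,\vec c}$ is immediate, and the inequality $\dft(n)\le D_f(\tilde k)<s$ for $n=f_{B,\vec c}((3^{j_i}))\,3^{k_{r+1}}$ follows from Proposition~\ref{basicub} applied to the \emph{original} pair $(f,C_f)$, not from any low-defect structure on $f_{B,\vec c}$. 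So drop the low-defect claim and argue directly with $D_f$.

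By contrast, the paper's truncation is organized so that one only ever substitutes into variables forming a downward-closed set in the \emph{nesting ordering} (concretely, into minimal variables, recursively via Corollary~\ref{dirtrunc2} and Theorem~\ref{thmtrunc}). This extra care guarantees that every resulting $g$ is again a low-defect polynomial with $\dft(g,D)\le s$, which is what the stronger Theorem~\ref{mainthm} records and what later applications in the paper rely on. Your Dickson approach reaches Theorem~\ref{mainthmfront} more directly but forfeits that structural conclusion; the paper's route is longer precisely because it preserves the low-defect property, not merely multilinearity.
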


This theorem is a special case of a stronger result; see Theorem~\ref{mainthm}.

Note that it is possible that, for a given $s$, there will be more than one set
$\sT_s$ satisfying the conclusions of Theorem~\ref{mainthmfront}.  In
particular, it's not clear if the $\sT_s$ generated by the methods of this paper
will be minimal in size.  We ask:

\begin{quest}
For a given $s$, what is the smallest size $g(s)$ of a set $\sT_s$ as above?
\end{quest}

We can also ask what can be said about the function $g(s)$ as $s$ varies.  It is
not monotonic in $s$; for instance, let us consider what happens as $s$
approaches $1$ from below.  We use the classification of numbers of defect less
than $1$ from \cite{paper1}.  For any $s<1$, there's a finite set of numbers $m$
such that any $n$ with $\dft(n)<1$ can be written as $m=n3^k$ for some $k$.  Or
in other words, $\sT_s$ necessarily consists of a finite set of constants.  As
$s$ approaches $1$ from below, the required number of these constants approaches
infinity, i.e., $\lim_{s\to1^-}=\infty$.  However, $g(1)$ is certainly finite,
since all but finitely many of the constants in the $\sS_s$ for $s<1$ can be
grouped together into a single infinite family, $3$-represented by the single
low-defect polynomial $3x+1$.  The $g(s)$ was only required to balloon to
infinity as $s$ approached $1$ from below as for $s<1$, $\sS_s$ could not
contain this short summary, needing to list each possibility separately.

This lack of monotonicity poses an obstacle for attempts to answer this question
simply.  However, one could still possibly obtain a simpler (and potentially
monotonic) answer if one were to restrict the domain of $s$; for instance, if
one required $s$ to be integral.

\subsection{Truncation procedure: An example}

The main new idea of this paper is the truncation procedure. 
We illustrate the truncation procedure by example,
demonstrating it.
For the more general
version, see Section~\ref{sectrunc}.

If we want to describe the set $A_s$, we can first use
Theorem~\ref{oldmainthmfront} to obtain a description of a covering set $\sS$
for $A_s$.  This is the ``building-up'' step.  Then we apply the truncation
procedure to each element of $\sS$; this is the``filtering-down'' step.  As an
example,
we will consider truncating the polynomial \[f(x_1,x_2)=(2x_1+1)x_2+1\]
to the defect value $s=\exnum$.

Observe that for any $k_1, k_2,$ and $k_3$, one has
\[\cpx{f(3^{k_1},3^{k_2})3^{k_3}}\le 4+3k_1+3k_2+3k_3,\] as illustrated by
Figure~\ref{thefig}.  Let us take this as the
``supposed complexity'' of this number.  Then the ``supposed defect'', obtained
by subtracting $3\log_3(f(3^{k_1},3^{k_2})3^{k_3})$ from the ``supposed
complexity'', is equal to
\[4-3\log_3(2+3^{-k_1}+3^{-k_1-k_2}).\]

Now, in reality the actual defect may be less than the ``supposed defect''; but
we will ignore this for now and just work with the ``supposed defect'', which we
know how to compute.  As it will turn out, using the ``supposed defect'' will
still yield the correct result, and we do not need to determine the actual
defect; see remark (2) below.

\begin{figure}[htb]
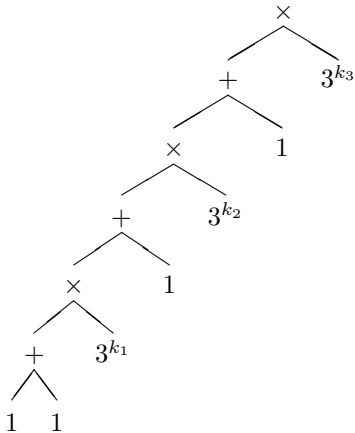

\label{thefig}
\caption{A tree for building the number
$((2\cdot3^{k_1}+1)3^{k_2}+1)3^{k_3}$; note that $3^{k_i}$ has complexity
$3k_i$.}
\begin{parsetree}
( .$\times$.
	( .$+$.
		( .$\times$.
			( .$+$.
				( .$\times$.
					( .$+$.
						`$1$'
						`$1$'
					)
					`$3^{k_1}$'
				)
				`$1$'
			)
			`$3^{k_2}$'
		)
		`$1$'
	)
	`$3^{k_3}$'
)
\end{parsetree}
\end{figure}

So if we were to truncate $f$ to a defect of $3$, there would be nothing to do;
the defect of any numbers coming from $f$ would already be less than $3$, since
they would be at most $4-3\log_3 2 \approx 2.1$.  But here we are truncating to
$\exnum$, and $f$ can indeed yield defects greater than $\exnum$ (for instance,
choose $k_1=k_2=2$, yielding a defect of approximately $1.94$), so it will not
be as simple as that.

In the above expression for the ``supposed defect'', $k_1$ has a larger effect
on the numerical value than $k_2$ does.  We will determine possible values for
$k_1$.  It can easily be checked that for $k_1\in\{0,1\}$, all the defects
produced this way fall below $\exnum$; for $k_1=2$, some are above and some are
below; and for $k_1\ge 3$, all of them are above $\exnum$.  So we will exclude
$k_1\ge 3$ from consideration, and mark down the possible values of $k_1$ as
being $0, 1$, and $2$; then we will substitute these in to $f$ (noting that
$x_i=3^{k_i}$) to yield new polynomials: $3x_2+1$, $7x_2+1$, and $19x_2+1$.  (We
ignore $k_3$ since it has no effect on the ``supposed defect''.)

We now apply the procedure recursively, truncating each of our new polynomials
to $\exnum$.  As noted above, $3x_2+1$ and $7x_2+1$ only produce defects less
than $\exnum$, so we do not need to do anything further to truncate them.  For
$19x_2+1$, or $19\cdot3^{k_2}+1$, we observe that $k_2=0$ and $k_2=1$ yield a
supposed defect below $\exnum$, while $k_2\ge 2$ yields supposed defects above
$\exnum$.  So once again, we substitute in $k=0$ and $k=1$ to $19\cdot3^{k_2}+1$
to yield the constant polynomials $20$ and $58$, along with our polynomials
$3x_2+1$ and $7x_2+1$ from earlier.  At this point we have reached polynomials
of degree $0$, so our final set of polynomials is $\{3x_2+1, 7x_2+1, 20, 58\}$.
If instead the polynomials had more variables, we would have to continue this
recursion further.

{\bf Remarks.}
(1) The general version is more complicated, because in
general there is not necessarily a linear order on which variables have the most
effect on the defect.  Nonetheless, the general idea of fixing values for
variables, progressing from variables of largest effect to variables of smallest
effect, is retained.

(2) Truncating a polynomial $f$ to a defect $r$ replaces it with a finite set of
polynomials that only produce numbers of defect less than $r$.  This leaves the
question of why applying this procedure to an appropriate set of polynomials
should yield all numbers with defect less than the chosen cutoff, rather than
only some of them.  This is where a stronger version of
Theorem~\ref{oldmainthmfront} -- Theorem~\ref{oldmainthm} below, taken from
\cite{paperwo} -- is needed.  This theorem ensures that, for any given $r$, not
only is there a finite set $\sS_r$ of polynomials which represent all numbers
with defect less than $r$ by substituting in powers of $3$; but in fact that it
represents all such numbers ``with the correct complexity''.  Hence, if we
truncate all of them to a defect of $r$, those with defect at least $r$ will be
filtered out simply by the nature of the procedure; and those with defect less
than $r$ will be kept, since each will be represented ``with the correct
complexity'' by some polynomial, and thus kept when that particular polynomial
is truncated.  See Sections~\ref{review} and \ref{sectrunc} for more detail.

\subsection{Comparison to addition chains}

It is worth discussing here some work analogous to this paper in the study of
addition chains.  An \emph{addition chain} for $n$ is defined to be a sequence
$(a_0,a_1,\ldots,a_r)$ such that $a_0=1$, $a_r=n$, and, for any $1\le k\le r$,
there exist $0\le i, j<k$ such that $a_k = a_i + a_j$; the number $r$ is called
the length of the addition chain.  The shortest length among addition chains for
$n$, called the \emph{addition chain length} of $n$, is denoted $\acl(n)$.
Addition chains were introduced in 1894 by H.~Dellac \cite{Dellac} and
reintroduced in 1937 by A.~Scholz \cite{aufgaben}; extensive surveys on the
topic can be found in Knuth \cite[Section 4.6.3]{TAOCP2} and Subbarao
\cite{subreview}.

The notion of addition chain length has obvious similarities to that of integer
complexity; each is a measure of the resources required to build up the number
$n$ starting from $1$.  Both allow the use of addition, but integer complexity
supplements this by allowing the use of multiplication, while addition chain
length supplements this by allowing the reuse of any number at no additional
cost once it has been constructed.  Furthermore, both measures are approximately
logarithmic; the function $\acl(n)$ satisfies
\[ \log_2 n \le \acl(n) \le 2\log_2 n. \]
 
A difference worth noting is that $\acl(n)$ is actually known to be asymptotic
to $\log_2 n$, as was proved by Brauer\cite{Brauer}, but the function $\cpx{n}$
is not known to be asymptotic to $3\log_3 n$; the value of the quantity
$\limsup_{n\to\infty} \frac{\cpx{n}}{\log n}$ remains unknown.  As mentioned
above, Guy \cite{Guy} has asked whether $\cpx{2^k}=2k$ for $k\ge 1$; if true, it
would make this quantity at least $\frac{2}{\log 2}$.  J.~Iraids
et.~al.~\cite{data2} have checked that this is true for $k\le 39$.

Another difference worth noting is that unlike integer complexity, there is no
known way to compute addition chain length via dynamic programming.
Specifically, to compute integer complexity this way, one may use the fact that
for any $n>1$,

\begin{displaymath}
\cpx{n}=\min_{\substack{a,b<n\in \mathbb{N} \\ a+b=n\ \mathrm{or}\ ab=n}}
	\cpx{a}+\cpx{b}.
\end{displaymath}

By contrast, addition chain length seems to be harder to compute.  Suppose we
have a shortest addition chain $(a_0,\ldots,a_{r-1},a_r)$ for $n$; one might
hope that $(a_0,\ldots,a_{r-1})$ is a shortest addition chain for $a_{r-1}$, but
this need not be the case.  An example is provided by the addition chain
$(1,2,3,4,7)$; this is a shortest addition chain for $7$, but $(1,2,3,4)$ is not
a shortest addition chain for $4$, as $(1,2,4)$ is shorter.   Moreover, there is
no way to assign to each natural number $n$ a shortest addition chain
$(a_0,\ldots,a_r)$ for $n$ such that $(a_0,\ldots,a_{r-1})$ is the addition
chain assigned to $a_{r-1}$ \cite{TAOCP2}. This can be an obstacle both to
computing addition chain length and proving statements about addition chains.

Nevertheless, there are important similarities between integer complexity and
addition chains.  As mentioned above, the set of all integer complexity defects
is a well-ordered subset of the real numbers, with order type $\omega^\omega$.
We might also define the notion of \emph{addition chain defect}, defined by
\[\dft^{\acl}(n):=\acl(n)-\log_2 n;\]
for as shown \cite{adcwo} by this author, Theorem~\ref{wothm} has an analogue
for addition chains:

\begin{thm}[Addition chain well-ordering theorem]
The set $\{ \dft^{\acl}(n) : n \in \mathbb{N} \}$, considered as a subset of
the real numbers, is well-ordered and has order type $\omega^\omega$.
\end{thm}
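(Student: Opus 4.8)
The plan is to mirror, as closely as the combinatorics allows, the proof of Theorem~\ref{wothm} for integer complexity. The heart of that proof is the structural fact recorded in Theorem~\ref{oldmainthmfront}: numbers of bounded defect are parametrized by a finite list of low-defect polynomials together with a tuple of exponents of $3$. So the first task --- and the one I expect to be hardest --- is to establish the analogous structure theorem for addition chains: for each real $r\ge 0$ there is a finite family of functions $f$, each of some ``degree'' $d=d(f)$, such that every $n$ with $\dft^{\acl}(n)<r$ can be written as $n=f(2^{k_1},\ldots,2^{k_d})$ for nonnegative integers $k_1,\ldots,k_d$, and moreover this representation is witnessed by an actual addition chain of optimal length, so that it captures $\acl(n)$ and not merely an upper bound.

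To obtain such a structure theorem I would analyze a shortest addition chain for $n$ by classifying its steps as \emph{doublings} (steps of the form $a_k=2a_{k-1}$) and \emph{small steps} (all others), following the classical small-step analysis of Brauer and Sch\"onhage. The key quantitative input is that a chain of defect less than $r$ contains at most some bound $C(r)$ small steps, depending only on $r$; this is the addition-chain counterpart of Selfridge's lower bound forcing the defect to behave like a genuine cost, and it follows from the standard inequality relating chain length, $\log_2 n$, and the number of small steps. Given this, the positions of the small steps and the choice of which earlier chain elements are combined at each one form finite combinatorial data, while the lengths of the maximal runs of consecutive doublings are free parameters $k_1,\ldots,k_d$ ranging over the nonnegative integers and playing exactly the role of the exponents in Theorem~\ref{oldmainthmfront}. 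Substituting $2^{k_i}$ for these run lengths expresses $n$ as $f(2^{k_1},\ldots,2^{k_d})$ with $d$ bounded in terms of $C(r)$ and $f$ drawn from a finite set. The main obstacle is that, as the paper notes, addition chains lack the clean recursive structure of integer complexity --- an optimal initial segment of an optimal chain need not itself be optimal --- so one cannot build the family of $f$'s up inductively in the easy way; one must argue more globally about the shape of the whole chain, and must take care that the parametrization is tight enough that the length $\acl(n)$, rather than just some upper bound, is recovered.

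Once the structure theorem is available, the order-type computation proceeds as in \cite{paperwo}. For a fixed $f$ of degree $d$ set $g(k_1,\ldots,k_d):=\dft^{\acl}(f(2^{k_1},\ldots,2^{k_d}))$; as each $k_i\to\infty$ the quantity $g$ approaches a limit, and more generally $g$ is, outside of finitely many exceptional tuples, monotone in each coordinate, so the set $\{\,g(k_1,\ldots,k_d):k_i\ge 0\,\}$ is well-ordered of order type at most $\omega^d$ --- one constructs the order isomorphism by induction on $d$, peeling off one variable at a time. Taking the finite union over the finitely many functions $f$ attached to the cutoff $r$, the set of $\dft^{\acl}$-values below $r$ is well-ordered of order type strictly less than $\omega^\omega$; since every natural number has finite defect and $\{\dft^{\acl}(n):\dft^{\acl}(n)<r\}$ is an initial segment of $\{\dft^{\acl}(n):n\in\N\}$, letting $r\to\infty$ shows the latter is well-ordered of order type at most $\omega^\omega$.

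For the reverse inequality it suffices to exhibit, for every $d$, a family of natural numbers whose $\acl$-defects form an increasing chain of order type $\omega^d$ inside the defect set. A natural choice is the family of numbers whose binary expansions consist of $d$ fixed short blocks separated by $d$ runs of zeros of independently varying lengths $k_1,\ldots,k_d$: the binary method bounds $\acl$ above by a quantity that depends on the $k_i$ only through lower-order correction terms, while the small-step count bounds it below by a matching quantity, so $\dft^{\acl}$ on this family is essentially a function of $(k_1,\ldots,k_d)$ of the type analyzed above and realizes order type $\omega^d$. Combining the two bounds yields order type exactly $\omega^\omega$, completing the proof.
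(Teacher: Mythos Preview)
The paper does not contain a proof of this theorem. It is stated as background in the introductory comparison with addition chains and is attributed to the author's separate paper \cite{adcwo}; no argument, sketch, or even outline is given here. So there is nothing in the present paper to compare your proposal against.

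That said, your plan is broadly consonant with the hints the paper does give: it explicitly points to Flammenkamp's result that for each integer $k$ the numbers with at most $k$ small steps are parametrized by a finite set of polynomials evaluated at powers of $2$, and it flags the lack of a dynamic-programming recursion as the chief structural obstacle. Your proposal to classify steps into doublings and small steps, bound the number of small steps in terms of the defect, treat the combinatorics of the small steps as finite data and the doubling-run lengths as free parameters, and then run the $\omega^d$ order-type argument variable by variable, is exactly the shape one would expect the proof in \cite{adcwo} to take. Whether your sketch is \emph{correct} is another matter: the delicate point you yourself identify --- that optimal prefixes of optimal chains need not be optimal, so the parametrization must be argued globally and must recover $\acl(n)$ exactly --- is where all the work lies, and your proposal does not actually carry it out. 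For a real comparison you would need to consult \cite{adcwo} directly.
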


Theorem~\ref{mainthmfront} seems to have a partial analogue for addition chains
in the work of A.~Flammenkamp \cite{fk}.  As mentioned above, this author
introduced the addition chain defect $\dft^{\acl}(n)$, but a closely related
quantity, the number of \emph{small steps} of $n$, was introduced by Knuth
\cite{TAOCP2}.  The number of small steps of $n$ is defined by \[ s(n) :=
\acl(n) - \lfloor \log_2 n \rfloor;\] clearly, this is related to
$\dft^{\acl}(n)$ by $s(n)=\lceil\dft^{\acl}(n)\rceil$.

In 1991, A.~Flammenkamp \cite{fk} determined a method for producing descriptions of all
numbers $n$ with $s(n)\le k$ for a given integer $k$, and produced such
descriptions for $k\le 3$.  Note that for $k$ an integer, $s(n)\le k$
if and only if $\dft^{\acl}(n)\le k$, so this is the same as determining all $n$
with $\dft^{\acl}(n)\le k$, restricted to the case where $k$ is an integer.
Part of what Flammenkamp proved may be summarized as the following:

\begin{thm}[Flammenkamp]
For any integer $k\ge 0$, there exists a finite set $\sS_k$ of polynomials (in
any number of variables, with nonnegative integer coefficients) such that for
any $n$, one has $s(n)\le k$ if and only if one can write
$n=f(2^{m_1},\ldots,2^{m_r})2^{m_{r+1}}$ for some $f\in \sS_k$ and some integers
$m_1,\ldots,m_{r+1}\ge0$.
\end{thm}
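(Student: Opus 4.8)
The plan is to run the two-phase ``build up, then filter down'' strategy behind Theorems~\ref{oldmainthmfront} and \ref{mainthmfront}, transported to addition chains through the dictionary $3\leftrightarrow 2$, $3\log_3\leftrightarrow\log_2$, and ``multiply by $3^k$'' $\leftrightarrow$ ``perform $k$ consecutive doublings''. Since $\acl(2^m)=m$, doubling steps cost no small steps, so powers of $2$ play here the role that powers of $3$ play there. Fix, for a given $n$, a shortest addition chain, normalized to be strictly increasing; it has length $\acl(n)=\floor{\log_2 n}+s(n)$, its \emph{big steps} (those with $\floor{\log_2 a_i}=\floor{\log_2 a_{i-1}}+1$) number $\floor{\log_2 n}$, and its small steps number $s(n)\le k$.

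\medskip
\noindent\textbf{Phase 1 (building up).} Call a step a \emph{doubling} if $a_i=2a_{i-1}$. The combinatorial heart of the argument is a structural lemma: in a shortest chain with at most $k$ small steps the number of non-doubling steps is bounded by a function $B(k)$ of $k$ alone, and, moreover, the ``type'' of each non-doubling step --- which previously computed chain element is reused, and how many doublings separate it from the current position --- ranges over a finite set. (The ingredients are: every big step has the form $a_i=a_{i-1}+a_j$; a big step using only elements $\le a_{i-2}$ forces the preceding step to be small; and the binary digit count satisfies $\nu(n)\le 2^{s(n)}$.) Contracting every maximal run of doublings to a single parametrized step then shows that every chain element equals a power of $2$ times an element of a ``reduced chain'' drawn from a finite list, so that
\[
\{\,n:s(n)\le k\,\}\ \subseteq\ \bigcup_{f\in\sS'_k}\{\,f(2^{m_1},\ldots,2^{m_r})2^{m_{r+1}}: m_1,\ldots,m_{r+1}\ge 0\,\},
\]
where $\sS'_k$ is a finite set of polynomials with nonnegative integer coefficients, each obtained from the constant $1$ by the operations ``multiply by a fresh variable'' (a doubling run) and ``add a previously constructed subexpression'' (a non-doubling step), the latter used at most $B(k)$ times. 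This is the addition-chain analogue of Theorem~\ref{oldmainthm}, and one proves the stronger assertion that every $n$ with $s(n)\le k$ is produced above \emph{together with the displayed chain having length exactly $\acl(n)$} --- i.e.\ ``with the correct number of small steps''.

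\medskip
\noindent\textbf{Phase 2 (truncation).} The set $\sS'_k$ over-represents: for a fixed $f$ some parameter choices make $s\bigl(f(2^{\vec m})2^{m_{r+1}}\bigr)>k$. I would truncate each $f\in\sS'_k$ exactly as in the worked example above. Attach to $f$ its ``supposed small-step count'': if $f$ uses $c$ non-doubling build operations, the displayed chain has length $c+\sum_i m_i$, so the supposed count is $c+\sum_i m_i-\floor{\log_2\bigl(f(2^{\vec m})2^{m_{r+1}}\bigr)}$, a quantity depending on only finitely many of the $m_i$ and bounding the true $s$ from above. Progressing from the variable with the largest effect on this quantity down to the smallest, restrict each such variable to the finite set of values for which the supposed count can still be $\le k$, substitute the retained values back into $f$ to spawn polynomials of strictly smaller degree, and recurse until degree $0$. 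Let $\sS_k$ be the collection of all polynomials surviving this process over all $f\in\sS'_k$; it is finite.

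\medskip
It remains to check the two inclusions. For ``$\Leftarrow$'', each $f\in\sS_k$ carries an explicit chain recipe whose doubling runs have lengths $m_i$, and by construction of the truncation its displayed small-step count is $\le k$ for every admissible $\vec m$; hence $s\le k$. For ``$\Rightarrow$'', given $n$ with $s(n)\le k$, Phase 1 represents it via some $f_0\in\sS'_k$ with the displayed chain having length $\acl(n)$, so that the supposed and true small-step counts coincide and both equal $s(n)\le k$; since truncation only discards parameter choices whose supposed count exceeds $k$, this representation of $n$ is not discarded and survives into some member of $\sS_k$ (the argument descending unchanged through the recursion, a substituted constant simply fixing a doubling-run length). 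The main obstacle is Phase 1: unlike integer complexity, $\acl$ admits no dynamic-programming recursion --- a terminal segment of a shortest chain need not be a shortest chain for its final entry, as $(1,2,3,4,7)$ shows --- so $\sS'_k$ cannot be built by naive recursion on shorter chains, and both the bound $B(k)$ on the non-doubling skeleton and the ``correct length'' strengthening must be established by a direct combinatorial analysis of shortest chains.
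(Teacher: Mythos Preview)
The paper does not prove this theorem. It is stated as a result of Flammenkamp, with a citation to \cite{fk}, and no proof or proof sketch is given; the paper only remarks that Flammenkamp's polynomials are more complicated than low-defect polynomials (in particular, not always multilinear). So there is no ``paper's own proof'' to compare your proposal against.

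As a standalone sketch, your two-phase plan is a reasonable heuristic, but several steps do not transfer cleanly from the integer-complexity setting. In Phase~1 the polynomials you build are not read-once: a non-doubling step may reuse an arbitrary earlier chain element, so the same subexpression can occur multiply, and the tree/nesting-order machinery that makes truncation work in this paper is unavailable. In Phase~2 your ``supposed small-step count'' involves $\lfloor\log_2(\cdot)\rfloor$, which is not monotone in each $m_i$ the way $\dft_{f,C}$ is; the argument ``restrict the variable of largest effect to finitely many values and recurse'' relies on exactly that monotonicity, and without it you have not shown the recursion terminates or that all good parameter choices survive. Finally, the Phase~1 structural lemma you invoke (bounding the number of non-doubling steps by a function of $k$ and classifying their types) is the actual content of Flammenkamp's work, and you have only asserted it; the ingredients you list (e.g.\ $\nu(n)\le 2^{s(n)}$) are correct but a long way from the full classification. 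So as written this is a plausible outline rather than a proof.
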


Note that this only allows integer $k$, as opposed to
Theorem~\ref{mainthmfront}, which allows arbitrary real $s$.  Also, the polynomials used in Flammenkamp's method are more complicated
than those produced by Theorem~\ref{mainthmfront}; for instance, they cannot
always be taken to be multilinear.

\section{The defect, stability, and low-defect polynomials}
\label{review}

In this section we review the results of \cite{paper1} and \cite{paperwo}
regarding the defect $\dft(n)$, the stable complexity $\cpx{n}_\st$ and stable
defect $\dft_\st(n)$ described below, and low-defect polynomials.

\subsection{The defect and stability}
\label{subsecdft}

First, some basic facts about the defect:

\begin{thm}
\label{oldprops}
We have:
\begin{enumerate}
\item For all $n$, $\dft(n)\ge 0$.
\item For $k\ge 0$, $\dft(3^k n)\le \dft(n)$, with equality if and only if
$\cpx{3^k n}=3k+\cpx{n}$.  The difference $\dft(n)-\dft(3^k n)$ is a nonnegative
integer.
\item A number $n$ is stable if and only if for any $k\ge 0$, $\dft(3^k
n)=\dft(n)$.
\item If the difference $\dft(n)-\dft(m)$ is rational, then $n=m3^k$ for some
integer $k$ (and so $\dft(n)-\dft(m)\in\mathbb{Z}$).
\item Given any $n$, there exists $k$ such that $3^k n$ is stable.
\item For a given defect $\alpha$, the set $\{m: \dft(m)=\alpha \}$ has either
the form $\{n3^k : 0\le k\le L\}$ for some $n$ and $L$, or the form $\{n3^k :
0\le k\}$ for some $n$.  This latter occurs if and only if $\alpha$ is the
smallest defect among $\dft(3^k n)$ for $k\in \mathbb{Z}$.
\item If $\dft(n)=\dft(m)$, then $\cpx{n}=\cpx{m} \pmod{3}$.
\item $\dft(1)=1$, and for $k\ge 1$, $\dft(3^k)=0$.  No other integers occur as
$\dft(n)$ for any $n$.
\item If $\dft(n)=\dft(m)$ and $n$ is stable, then so is $m$.
\end{enumerate}
\end{thm}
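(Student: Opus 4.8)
### Proof Proposal

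The plan is to prove the nine assertions of Theorem~\ref{oldprops} mostly in sequence, since each builds on its predecessors, drawing throughout on two elementary facts about complexity: the subadditivity-type relation $\cpx{ab}\le\cpx a+\cpx b$ (hence $\cpx{3^k n}\le 3k+\cpx n$), and the additive behavior of the logarithm, which together mean that $\dft(3^k n)=\dft(n)-(3k+\cpx n-\cpx{3^k n})$, an honest-to-goodness difference of a real number and a nonnegative integer. First I would establish (1) directly from Selfridge's bound $\cpx n\ge 3\log_3 n$. Then for (2), I would expand $\dft(3^k n)=\cpx{3^k n}-3\log_3(3^k n)=\cpx{3^k n}-3k-3\log_3 n$ and compare with $\dft(n)$; the inequality $\cpx{3^k n}\le 3k+\cpx n$ gives $\dft(3^k n)\le\dft(n)$, equality is by definition the case $\cpx{3^k n}=3k+\cpx n$, and the difference $\dft(n)-\dft(3^k n)=3k+\cpx n-\cpx{3^k n}$ is visibly a nonnegative integer. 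Assertion (3) is then essentially the definition of stability (that $\cpx{3^k n}=3k+\cpx n$ for all $k\ge 0$) combined with (2).

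The crux of the theorem is assertion (4), from which several later parts follow, so I expect that to be the main obstacle. The key point is that $3\log_3$ is, up to the integer contributions of powers of $3$, "linearly independent over $\Q$" from the integer values taken by $\cpx{\cdot}$: if $\dft(n)-\dft(m)\in\Q$, then $(\cpx n-\cpx m)-3\log_3(n/m)\in\Q$, hence $\log_3(n/m)\in\Q$, hence (by unique factorization, since $n/m$ is rational and its logarithm base $3$ is rational) $n/m$ is an integer power of $3$. The honest work here is the number-theoretic step that $\log_3 q\in\Q$ for $q\in\Q_{>0}$ forces $q=3^k$; this follows by writing $q=3^{a/b}$ with $\gcd(a,b)=1$ and observing $q^b=3^a$ forces $b=1$ via unique factorization. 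Once $n=m3^k$, part (2) (applied in whichever direction makes the exponent nonnegative) shows the difference lies in $\Z$.

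With (4) in hand the remaining parts are comparatively routine. For (5), among the defects $\dft(3^k n)$, $k\ge 0$, which by (2) form a nonincreasing sequence of values each obtained by subtracting a nonnegative integer from $\dft(n)$, the differences $\dft(n)-\dft(3^k n)$ are nondecreasing nonnegative integers, and I would argue they must eventually stabilize (the total drop is bounded because each $\dft(3^k n)\ge 0$), at which point $3^k n$ is stable by (3). For (6), fix $n_0$ with $\dft(n_0)=\alpha$; any $m$ with $\dft(m)=\alpha$ has $\dft(m)-\dft(n_0)=0\in\Q$, so by (4) $m=n_0 3^j$ for some $j\in\Z$, and then (2) pins down exactly which $j$ preserve the defect — giving a set of the stated shape, infinite precisely when $\alpha$ is minimal in its "$3$-orbit". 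Assertion (7) follows because $\dft(n)=\dft(m)$ forces $n=m3^k$ by (4), whence $\cpx n$ and $\cpx m$ differ by $\cpx n-\cpx m$, which is congruent to $3k\equiv 0\pmod 3$ after accounting for the relation from (2); more carefully, one uses that $\dft(n)=\dft(m)$ together with $\cpx n-3\log_3 n=\cpx m-3\log_3 m$ gives $\cpx n-\cpx m=3\log_3(n/m)=3k\in 3\Z$. For (8), $\dft(1)=1-0=1$ is immediate and $\dft(3^k)=3k-3k=0$; conversely if $\dft(n)\in\Z$ then $\dft(n)-\dft(3)\in\Z\subseteq\Q$ so by (4) $n=3^{k}$ for some $k$ (or $n=1$), handling the two cases. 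Finally (9): if $\dft(n)=\dft(m)$ and $n$ is stable, then by (4) $m=n3^k$; if $k\ge 0$ then stability of $n$ and (2)/(3) transfer stability to $m$ directly, and if $k<0$ one notes $n=m3^{-k}$ and that a number all of whose "$3$-multiples" have the same defect as a stable number must itself be stable — here I would invoke that stability is equivalent to $\dft$ being minimal in the $3$-orbit (from (6)), and $\dft(m)=\dft(n)$ with $n$ minimal forces $m$ minimal too. The main thing to be careful about throughout is the direction of the power of $3$ in (4) and keeping the nonnegativity hypotheses in (2) straight.
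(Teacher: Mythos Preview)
Your argument is correct. The paper itself does not prove this theorem; it simply cites earlier work (Theorem~2.1 and Proposition~3.1 of \cite{paperwo}, and Proposition~12 of \cite{paper1}) for each part. Your self-contained proof is sound and, in substance, reconstructs what those references do: the key step is indeed (4), reducing $\dft(n)-\dft(m)\in\Q$ to $\log_3(n/m)\in\Q$ and invoking unique factorization, after which (5)--(9) fall out as you describe. A couple of places could be tightened slightly---in (6) you should make explicit that the set of $k\ge 0$ with $\dft(n3^k)=\alpha$ is downward closed (since $\alpha=\dft(n3^k)\le\dft(n3^j)\le\dft(n)=\alpha$ for $0\le j\le k$), and in (9) the case $k<0$ is cleanest via your observation that stability is equivalent to minimality of the defect in the $3$-orbit---but there are no gaps.
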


\begin{proof}
Parts (1) through (8), excepting part (3), are just Theorem~2.1 from
\cite{paperwo}.  Part (3) is Proposition~12 from \cite{paper1}, and part (9) is
Proposition~3.1 from \cite{paperwo}.
\end{proof}

Also, although it will not be a focus of this paper, we will sometimes want to
consider the set of all defects:

\begin{defn}
We define the \emph{defect set} $\mathscr{D}$ to be $\{\dft(n):n\in\N\}$, the
set of all defects.
\end{defn}

The paper \cite{paperwo} also defined the notion of a \emph{stable defect}:

\begin{defn}
We define a \emph{stable defect} to be the defect of a stable number.
\end{defn}

Because of part (9) of Theorem~\ref{oldprops}, this definition makes sense; a
stable defect $\alpha$ is not just one that is the defect of some stable number,
but one for which any $n$ with $\dft(n)=\alpha$ is stable.  Stable defects can
also be characterized by the following proposition from \cite{paperwo}:

\begin{prop}
\label{modz1}
A defect $\alpha$ is stable if and only if it is the smallest
$\beta\in\mathscr{D}$ such that $\beta\equiv\alpha\pmod{1}$.
\end{prop}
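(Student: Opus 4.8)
The plan is to prove both directions of the characterization using the facts about the defect collected in Theorem~\ref{oldprops}, especially parts (2), (5), and (8), together with the observation that the map $k\mapsto\dft(3^k n)$ (for $k\in\Z$, interpreting this via part (5) whenever $3^{-k}n$ is still an integer) is nonincreasing in $k$ with integer drops. First I would fix a defect $\alpha=\dft(n)$ and consider the set $\{\dft(3^k n):k\in\Z\}$ of defects congruent to $\alpha$ that arise from the orbit of $n$ under multiplication and division by $3$. By part~(2), multiplying by $3$ never increases the defect and the change is always a nonnegative integer, so this set of orbit-defects has a least element, attained at some $3^{k_0}n$; by part~(5) (applied suitably) that minimal element is a stable defect, and by part~(4) every defect $\equiv\alpha\pmod 1$ in $\mathscr D$ differs from $\alpha$ by an integer, hence (being $\geq$ the orbit minimum, or else smaller — this is exactly the point to nail down) the orbit minimum is in fact the global minimum of $\{\beta\in\mathscr D:\beta\equiv\alpha\}$.

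For the forward direction, suppose $\alpha$ is stable, i.e. $\alpha=\dft(n)$ with $n$ stable. Then by part~(3), $\dft(3^k n)=\alpha$ for all $k\ge0$, so $\alpha$ is the minimum over its own orbit. It remains to show no $m$ with $\dft(m)\equiv\alpha\pmod 1$ has $\dft(m)<\alpha$. If $\dft(m)=\beta<\alpha$ with $\beta\equiv\alpha\pmod1$, then $\alpha-\beta$ is a positive integer; I would use part~(5) to find $k$ with $3^k m$ stable, note $\dft(3^k m)\le\beta<\alpha$, and then invoke part~(4) or a direct complexity comparison to derive that $n$ and $3^k m$ lie in the same $3$-power orbit — forcing $\alpha=\dft(n)$ to equal one of the $\dft(3^j\cdot 3^k m)$, all of which are $\le\beta<\alpha$, a contradiction. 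The cleanest route is probably: since $\dft(n)-\dft(3^km)$ is an integer difference of defects that are congruent mod~$1$, apply part~(4) to conclude $n = 3^k m\cdot 3^j$ for some $j\in\Z$, and then part~(2) gives $\dft(n)\le\dft(3^k m)\le\beta$, contradicting $\beta<\alpha$.

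For the reverse direction, suppose $\alpha$ is the least element of $\{\beta\in\mathscr D:\beta\equiv\alpha\pmod1\}$, and pick $n$ with $\dft(n)=\alpha$. By part~(5) there is $k\ge0$ with $3^k n$ stable; set $\beta=\dft(3^k n)$, which is a stable defect. By part~(2), $\beta\le\alpha$, and $\alpha-\beta\in\Z$, so $\beta\equiv\alpha\pmod1$; by minimality of $\alpha$ we get $\beta=\alpha$. Thus $\dft(3^k n)=\dft(n)$, and now part~(9) of Theorem~\ref{oldprops} — if $\dft(n)=\dft(m)$ and $m$ is stable then $n$ is stable — applied with $m=3^k n$ shows $n$ is stable, so $\alpha=\dft(n)$ is a stable defect, as desired.

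The main obstacle I anticipate is the forward direction: ruling out the existence of some unrelated $m$, not in the $3$-power orbit of $n$, whose defect is smaller than $\alpha$ but congruent to it mod~$1$. This is exactly what part~(4) of Theorem~\ref{oldprops} is designed to prevent — it asserts that a rational (in particular, integer) difference of defects can only occur between numbers in the same $3$-orbit — so the proof really comes down to deploying part~(4) correctly and then chaining it with the monotonicity in part~(2); once that link is made, both directions are short. I would double-check the edge case $\alpha=0$ (where $n$ is a power of $3$) and confirm it is consistent with the statement, but I expect no difficulty there since $0$ is trivially the least nonnegative defect $\equiv0\pmod1$ and powers of $3$ are stable.
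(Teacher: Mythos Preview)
The paper does not actually prove this proposition here; it is quoted from \cite{paperwo} as part of the review material in Section~\ref{subsecdft}, so there is no in-paper proof to compare against.

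Your argument is essentially correct and uses exactly the right ingredients from Theorem~\ref{oldprops}. The reverse direction is clean. In the forward direction there is one small gap: after applying part~(4) to get $n = 3^{j}\cdot 3^{k} m$ for some integer $j$, your claim that ``part~(2) gives $\dft(n)\le\dft(3^k m)$'' is only immediate when $j\ge 0$. If $j<0$ you instead have $3^k m = 3^{-j} n$ with $-j>0$; here part~(2) goes the wrong way, and you must invoke the stability of $n$ itself (part~(3)) to conclude $\dft(3^k m)=\dft(3^{-j}n)=\dft(n)=\alpha$, contradicting $\dft(3^k m)\le\beta<\alpha$. Once this sign case is handled, the proof is complete. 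Incidentally, the detour through a stable $3^k m$ is unnecessary: you can apply part~(4) directly to $n$ and $m$ and run the same two-case argument.
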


We can also define the \emph{stable defect} of a given number, which we denote
$\dft_\st(n)$.

\begin{defn}
For a positive integer $n$, define the \emph{stable defect} of $n$, denoted
$\dft_\st(n)$, to be $\dft(3^k n)$ for any $k$ such that $3^k n$ is stable.
(This is well-defined as if $3^k n$ and $3^\ell n$ are stable, then $k\ge \ell$
implies $\dft(3^k n)=\dft(3^\ell n)$, and so does $\ell\ge k$.)
\end{defn}

Note that the statement ``$\alpha$ is a stable defect'', which earlier we were
thinking of as ``$\alpha=\dft(n)$ for some stable $n$'', can also be read as the
equivalent statement ``$\alpha=\dft_\st(n)$ for some $n$''.

We then have the following facts relating the notions of $\cpx{n}$, $\dft(n)$,
$\cpx{n}_\st$, and $\dft_\st(n)$:

\begin{prop}
\label{stoldprops}
We have:
\begin{enumerate}
\item $\dft_\st(n)= \min_{k\ge 0} \dft(3^k n)$
\item $\dft_\st(n)$ is the smallest $\alpha\in\mathscr{D}$ such that
$\alpha\equiv \dft(n) \pmod{1}$.
\item $\cpx{n}_\st = \min_{k\ge 0} (\cpx{3^k n}-3k)$
\item $\dft_\st(n)=\cpx{n}_\st-3\log_3 n$
\item $\dft_\st(n) \le \dft(n)$, with equality if and only if $n$ is stable.
\item $\cpx{n}_\st \le \cpx{n}$, with equality if and only if $n$ is stable.
\end{enumerate}
\end{prop}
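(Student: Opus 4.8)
The plan is to reduce every part to the basic facts collected in Theorem~\ref{oldprops} together with Proposition~\ref{modz1}, exploiting that all the ``stable'' quantities are obtained by evaluating at a power of $3$ large enough to reach stability. Recall that $\cpx{n}_\st$ is defined, analogously to $\dft_\st(n)$, as $\cpx{3^K n} - 3K$ for any $K$ with $3^K n$ stable; such a $K$ exists by Theorem~\ref{oldprops}(5), and the value is independent of the choice of $K$ by the same monotonicity argument used for $\dft_\st$.

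The one preliminary I would establish is that the sequence $a_k := \cpx{3^k n} - 3k$ is non-increasing and eventually constant. It is non-increasing because $\cpx{3^{k+1} n} \le \cpx{3^k n} + 3$ always (multiply a shortest representation of $3^k n$ by $1+1+1$), so $a_{k+1} \le a_k$. Since $a_k = \dft(3^k n) + 3\log_3 n$, the sequence $\dft(3^k n)$ is likewise non-increasing (as in Theorem~\ref{oldprops}(2)); its successive differences are nonnegative integers by Theorem~\ref{oldprops}(2) and it is bounded below by $0$ by Theorem~\ref{oldprops}(1), so it is eventually constant, and by Theorem~\ref{oldprops}(3) it reaches that constant value exactly when $3^k n$ becomes stable. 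Thus for $K$ with $3^K n$ stable, $a_K = \min_{k \ge 0} a_k$ and $\dft(3^K n) = \min_{k\ge 0}\dft(3^k n)$, which is precisely parts (1) and (3). Part (4) is then the direct computation $\dft_\st(n) = \dft(3^K n) = \cpx{3^K n} - 3K - 3\log_3 n = \cpx{n}_\st - 3\log_3 n$.

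For part (2), I would note that $\dft_\st(n) = \dft(3^K n)$ is a stable defect, and that it is congruent to $\dft(n)$ mod $1$ because the difference $\dft(n) - \dft(3^K n)$ is an integer (Theorem~\ref{oldprops}(2)); Proposition~\ref{modz1} then identifies a stable defect with the least element of $\mathscr{D}$ in its congruence class mod $1$, giving the claim. For parts (5) and (6), the inequalities $\dft_\st(n) \le \dft(n)$ and $\cpx{n}_\st \le \cpx{n}$ follow from parts (1) and (3) by taking $k = 0$ in the minima. Equality in (5) holds when $n$ is stable by Theorem~\ref{oldprops}(3), and conversely $\dft_\st(n) = \dft(n)$ forces $\dft(3^K n) = \dft(n)$ with $3^K n$ stable, whence $n$ is stable by Theorem~\ref{oldprops}(9); and since subtracting $3\log_3 n$ and invoking part (4) shows $\cpx{n}_\st = \cpx{n}$ iff $\dft_\st(n) = \dft(n)$, part (6) follows from part (5). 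There is no real obstacle here: the only point needing a little care is the well-definedness of $\cpx{n}_\st$ and the monotonicity-and-stabilization of $a_k$, both routine given Theorem~\ref{oldprops}.
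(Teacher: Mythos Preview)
Your argument is correct. The paper itself does not give a proof here at all: its ``proof'' of Proposition~\ref{stoldprops} is simply the citation ``These are just Propositions~3.5, 3.7, and 3.8 from \cite{paperwo}.'' So you have supplied a self-contained derivation from the facts already quoted in this paper (Theorem~\ref{oldprops} and Proposition~\ref{modz1}), which is strictly more than the paper does. The only thing worth flagging is that the paper never explicitly writes down the definition of $\cpx{n}_\st$ in the excerpt, so your ``recall'' is really supplying that definition; this is harmless, since the definition you give is the standard one and is exactly what is needed for parts (3), (4), and (6).
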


\begin{proof}
These are just Propositions~3.5, 3.7, and 3.8 from \cite{paperwo}.
\end{proof}

\subsection{Low-defect polynomials and low-defect pairs}

As has been mentioned in Section~\ref{intro}, we are going to represent
the set $A_r$ by substituting in powers of $3$ into certain multilinear
polynomials we call \emph{low-defect polynomials}.  We will associate with each
one a ``base complexity'' to from a \emph{low-defect pair}.  In this section we
will review the basic properties of these polynomials.  First, their definition:

\begin{defn}
\label{polydef}
We define the set $\mathscr{P}$ of \emph{low-defect pairs} as the smallest
subset of $\Z[x_1,x_2,\ldots]\times \N$ such that:
\begin{enumerate}
\item For any constant polynomial $k\in \N\subseteq\Z[x_1, x_2, \ldots]$ and any
$C\ge \cpx{k}$, we have $(k,C)\in \mathscr{P}$.
\item Given $(f_1,C_1)$ and $(f_2,C_2)$ in $\mathscr{P}$, we have $(f_1\otimes
f_2,C_1+C_2)\in\mathscr{P}$, where, if $f_1$ is in $r_1$ variables and $f_2$ is
in $r_2$ variables,
\[ (f_1\otimes f_2)(x_1,\ldots,x_{r_1+r_2}) :=
	f_1(x_1,\ldots,x_{r_1})f_2(x_{r_1+1},\ldots,x_{r_1+r_2}). \]
\item Given $(f,C)\in\mathscr{P}$, $c\in \N$, and $D\ge \cpx{c}$, we have
$(f\otimes x_1 + c,C+D)\in\mathscr{P}$ where $\otimes$ is as above.
\end{enumerate}

The polynomials obtained this way will be referred to as \emph{low-defect
polynomials}.  If $(f,C)$ is a low-defect pair, $C$ will be called its
\emph{base complexity}.  If $f$ is a low-defect polynomial, we will define its
\emph{absolute base complexity}, denoted $\cpx{f}$, to be the smallest $C$ such
that $(f,C)$ is a low-defect pair.
We will also associate to a low-defect polynomial $f$ the \emph{augmented
low-defect polynomial}
\[ \xpdd{f} = f\otimes x_1 \]
\end{defn}

Note that the degree of a low-defect polynomial is also equal to the number of
variables it uses; see Proposition~\ref{polystruct}.  We will often refer to the
``degree'' of a low-defect pair $(f,C)$; this refers to the degree of $f$.  Also
note that augmented low-defect polynomials are never low-defect polynomials; as
we will see in a moment (Proposition~\ref{polystruct}), low-defect polynomials
always have nonzero constant term, whereas augmented low-defect polynomials
always have zero constant term.  We can also observe, as mentioned above, that
low-defect polynomials are in fact read-once polynomials.

Note that we do not really care about what variables a low-defect polynomial (or
pair) is in -- if we permute the variables of a low-defect polynomial or replace
them with others, we will still regard the result as a low-defect polynomial.
From this perspective, the meaning of $f\otimes g$ could be simply regarded as
``relabel the variables of $f$ and $g$ so that they do not share any, then
multiply $f$ and $g$''.  Helpfully, the $\otimes$ operator is associative not
only with this more abstract way of thinking about it, but also in the concrete
way it was defined above.

In \cite{paperwo} were proved the following propositions about low-defect
pairs:

\begin{prop}
\label{polystruct}
Suppose $f$ is a low-defect polynomial of degree $r$.  Then $f$ is a
polynomial in the variables $x_1,\ldots,x_r$, and it is a multilinear
polynomial, i.e., it has degree $1$ in each of its variables.  The coefficients
are non-negative integers.  The constant term is nonzero, and so is the
coefficient of $x_1\ldots x_r$, which we will call the \emph{leading
coefficient} of $f$.
\end{prop}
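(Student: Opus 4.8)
The plan is to prove all the assertions simultaneously by structural induction on the construction of low-defect pairs given in Definition~\ref{polydef}. Since $\mathscr{P}$ is defined as the \emph{smallest} subset of $\Z[x_1,x_2,\ldots]\times\N$ closed under clauses (1)--(3), it suffices to verify that a suitable property $P(f)$ holds for the polynomials arising from clause (1) and is preserved by clauses (2) and (3). I would take $P(f)$ to be the conjunction: ``$f$ is multilinear in exactly the variables $x_1,\ldots,x_{\deg f}$ (so in particular its number of variables equals its degree), all its coefficients are nonnegative integers, its constant term is nonzero, and its coefficient of $x_1\cdots x_{\deg f}$ is nonzero.'' Phrasing the inductive statement to include the claim that the variables used are \emph{precisely} $x_1,\ldots,x_{\deg f}$ is the key bookkeeping that makes the relabeling built into the $\otimes$ operation behave predictably; it also delivers for free the remark after Definition~\ref{polydef} that degree equals number of variables.

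For the base case, clause (1), $f=k$ is a constant polynomial with $k\in\N$: it has degree $0$, uses no variables, is vacuously multilinear, and its constant term --- which we may regard as the coefficient of the empty product $x_1\cdots x_0$ --- is $k$. Here I would use that $\N$ denotes the positive integers (forced by the appearance of $\cpx{k}$, which is only defined for $k\ge 1$), so $k\ne 0$ and the constant/leading coefficient is nonzero. For clause (2), if $(f_1,C_1),(f_2,C_2)\in\mathscr{P}$ satisfy $P$ with $\deg f_i=r_i$, then $(f_1\otimes f_2)(x_1,\ldots,x_{r_1+r_2})=f_1(x_1,\ldots,x_{r_1})f_2(x_{r_1+1},\ldots,x_{r_1+r_2})$ is a product of polynomials in disjoint variable sets, hence multilinear in $x_1,\ldots,x_{r_1+r_2}$ with degree $r_1+r_2$, and every one of these variables genuinely appears because each $f_i$ has nonzero leading coefficient; its coefficients are sums of products of coefficients of $f_1$ and $f_2$ and so are nonnegative integers; its constant term is the product of the two (nonzero) constant terms; and its coefficient of $x_1\cdots x_{r_1+r_2}$ is the product of the two (nonzero) leading coefficients. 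For clause (3), with $(g,C)\in\mathscr{P}$ satisfying $P$ of degree $r'$ and $c\in\N$, we have $f=g\otimes x_1+c=g(x_1,\ldots,x_{r'})\,x_{r'+1}+c$: multiplying by the fresh variable $x_{r'+1}$ preserves multilinearity and raises the degree to $r'+1$, adding $c$ does not change the degree (since $r'+1\ge 1$) and affects no monomial except the constant term, which becomes $c\ne 0$; the coefficient of $x_1\cdots x_{r'+1}$ equals the leading coefficient of $g$ and is nonzero, and all coefficients remain nonnegative integers. The degenerate subcases (some $r_i=0$, or $g$ constant) are handled uniformly by the same formulas.

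No step here is deep. The only real points requiring care are (a) stating the inductive hypothesis strongly enough to track that the variables used are exactly $x_1,\ldots,x_{\deg f}$, so that the variable renaming in $f_1\otimes f_2$ and in $g\otimes x_1$ lines up correctly; and (b) invoking that $\N$ consists of the positive integers, since otherwise clause (1) with $k=0$ or clause (3) with $c=0$ would destroy the ``nonzero constant term'' conclusion. I expect the actual write-up to be quite short once $P(f)$ is chosen in this form.
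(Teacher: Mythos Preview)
Your structural induction is correct and is exactly the natural approach. Note, however, that this paper does not actually prove Proposition~\ref{polystruct}: it is stated here as a result ``proved in \cite{paperwo}'' and is simply cited. So there is no proof in this paper to compare against; your argument is presumably what appears in \cite{paperwo}, since structural induction on Definition~\ref{polydef} is the only reasonable way to establish a statement of this form about an inductively defined class.
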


\begin{prop}
\label{basicub}
If $(f,C)$ is a low-defect pair of degree $r$, then
\[\cpx{f(3^{n_1},\ldots,3^{n_r})}\le C+3(n_1+\ldots+n_r).\]
and
\[\cpx{\xpdd{f}(3^{n_1},\ldots,3^{n_{r+1}})}\le C+3(n_1+\ldots+n_{r+1}).\]
\end{prop}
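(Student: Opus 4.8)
The plan is to prove the first inequality by structural induction on the construction of the set $\mathscr{P}$ of low-defect pairs given in Definition~\ref{polydef}, and then to deduce the second inequality from the first with no extra work. Throughout I will use three elementary facts about integer complexity: that $\cpx{a+b}\le\cpx{a}+\cpx{b}$ and $\cpx{ab}\le\cpx{a}+\cpx{b}$ for all positive integers $a,b$, and the consequence $\cpx{3^k m}\le 3k+\cpx{m}$ for all $k\ge 0$ and $m\ge 1$ (which follows by induction on $k$ from $\cpx{3}\le 3$ and subadditivity under multiplication). The last of these is what lets the argument go through cleanly even when some $n_i=0$, a case where the naive estimate $\cpx{3^{n_i}}\le 3n_i$ fails; this is the only point in the argument where any care is needed.

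For the base case, a low-defect pair of the form $(k,C)$ with $k\in\N$ a constant polynomial and $C\ge\cpx{k}$ has degree $0$, so the claimed inequality reads simply $\cpx{k}\le C$, which holds by hypothesis.

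For the inductive step corresponding to clause (2) of Definition~\ref{polydef}, suppose $(f,C)=(f_1\otimes f_2,\,C_1+C_2)$ with $(f_i,C_i)\in\mathscr{P}$ of degree $r_i$, so that $f$ has degree $r=r_1+r_2$. Writing $f(3^{n_1},\ldots,3^{n_r})$ as the product $f_1(3^{n_1},\ldots,3^{n_{r_1}})\cdot f_2(3^{n_{r_1+1}},\ldots,3^{n_r})$ and applying subadditivity under multiplication together with the inductive hypotheses for $f_1$ and $f_2$ gives the bound $C_1+C_2+3(n_1+\cdots+n_r)=C+3(n_1+\cdots+n_r)$, as desired. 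For clause (3), suppose $(f,C)=(g\otimes x_1+c,\,C'+D)$ with $(g,C')\in\mathscr{P}$ of degree $r-1$, $c\in\N$, and $D\ge\cpx{c}$; then $f(3^{n_1},\ldots,3^{n_r})$ equals $g(3^{n_1},\ldots,3^{n_{r-1}})\cdot 3^{n_r}+c$, and combining $\cpx{a+b}\le\cpx{a}+\cpx{b}$, the estimate $\cpx{3^{n_r}m}\le 3n_r+\cpx{m}$, the inductive hypothesis for $g$, and $\cpx{c}\le D$ yields the bound $C'+D+3(n_1+\cdots+n_r)=C+3(n_1+\cdots+n_r)$. (Tracking degrees here uses Proposition~\ref{polystruct}, together with the associativity of $\otimes$ and the convention that the variables of a low-defect polynomial may be freely relabeled.)

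Finally, the augmented bound requires no further induction: since $\xpdd{f}=f\otimes x_1$, we have $\xpdd{f}(3^{n_1},\ldots,3^{n_{r+1}})=f(3^{n_1},\ldots,3^{n_r})\cdot 3^{n_{r+1}}$, so applying $\cpx{3^{n_{r+1}}m}\le 3n_{r+1}+\cpx{m}$ and then the first inequality gives $\cpx{\xpdd{f}(3^{n_1},\ldots,3^{n_{r+1}})}\le 3n_{r+1}+\cpx{f(3^{n_1},\ldots,3^{n_r})}\le C+3(n_1+\cdots+n_{r+1})$. I do not expect a genuine obstacle in any of this; aside from the zero-exponent subtlety noted at the outset, every step is a routine application of subadditivity.
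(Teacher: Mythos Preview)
Your argument is correct. The paper itself does not give a proof here at all: it simply cites Proposition~4.5 and Corollary~4.12 of \cite{paperwo}. Your self-contained structural induction on the three clauses of Definition~\ref{polydef} is the natural way to prove the statement and is almost certainly what the cited results do; your explicit handling of the $n_i=0$ case via $\cpx{3^k m}\le 3k+\cpx{m}$ rather than $\cpx{3^k}\le 3k$ is exactly the right care to take.
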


\begin{proof}
This is a combination of Proposition~4.5 and Corollary~4.12 from \cite{paperwo}.
\end{proof}

Because of this, it makes sense to define:

\begin{defn}
Given a low-defect pair $(f,C)$ (say of degree $r$) and a number $N$, we will
say that $(f,C)$ \emph{efficiently $3$-represents} $N$ if there exist
nonnegative integers $n_1,\ldots,n_r$ such that
\[N=f(3^{n_1},\ldots,3^{n_r})\ \textrm{and}\ \cpx{N}=C+3(n_1+\ldots+n_r).\]
We will say $(\xpdd{f},C)$ efficiently
$3$-represents $N$ if there exist $n_1,\ldots,n_{r+1}$ such that
\[N=\xpdd{f}(3^{n_1},\ldots,3^{n_{r+1}})\ \textrm{and}\ 
\cpx{N}=C+3(n_1+\ldots+n_{r+1}).\]
More generally, we will also say $f$ $3$-represents $N$ if there exist
nonnegative integers $n_1,\ldots,n_r$ such that $N=f(3^{n_1},\ldots,3^{n_r})$.
and similarly with $\xpdd{f}$.
\end{defn}

Note that if $(f,C)$ (or $(\xpdd{f},C)$) efficiently $3$-represents some $N$,
then $(f,\cpx{f})$ (respectively, $(\xpdd{f},\cpx{f})$ efficiently
$3$-represents $N$, which means that in order for $(f,C)$ (or $(\xpdd{f},C)$ to
$3$-represent anything efficiently at all, we must have $C=\cpx{f}$.  However it
is still worth using low-defect pairs rather than just low-defect polynomials
since we may not always know $\cpx{f}$.  In our applications here, where we want
to compute things, taking the time to compute $\cpx{f}$, rather than just making
do with an upper bound, may not be desirable.

For this reason it makes sense to use ``$f$ efficiently $3$-represents $N$'' to
mean ``some $(f,C)$ efficiently $3$-represents $N$'' or equivalently
``$(f,\cpx{f})$ efficiently $3$-reperesents $N$''.  Similarly with $\xpdd{f}$.

In keeping with the name, numbers $3$-represented by low-defect polynomials, or
their augmented versions, have bounded defect.  Let us make some definitions
first:

\begin{defn}
Given a low-defect pair $(f,C)$, we define $\dft(f,C)$, the defect of $(f,C)$,
to be $C-3\log_3 a$, where $a$ is the leading coefficient of $f$.  When we are
not concerned with keeping track of base complexities, we will use $\dft(f)$ to
mean $\dft(f,\cpx{f})$.
\end{defn}

\begin{defn}
Given a low-defect pair $(f,C)$ of degree $r$, we define
\[\dft_{f,C}(n_1,\ldots,n_r) =
C+3(n_1+\ldots+n_r)-3\log_3 f(3^{n_1},\ldots,3^{n_r}).\]
\end{defn}

Then we have:

\begin{prop}
\label{dftbd}
Let $(f,C)$ be a low-defect pair of degree $r$, and let $n_1,\ldots,n_{r+1}$ be
nonnegative integers.
\begin{enumerate}
\item We have
\[ \dft(\xpdd{f}(3^{n_1},\ldots,3^{n_{r+1}}))\le \dft_{f,C}(n_1,\ldots,n_r)\]
and the difference is an integer.
\item We have \[\dft_{f,C}(n_1,\ldots,n_r)\le\dft(f,C)\]
and if $r\ge 1$, this inequality is strict.
\end{enumerate}
\end{prop}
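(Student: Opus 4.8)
The plan is to prove the two parts separately, with part (2) feeding into part (1). For part (2), the key observation is that $\dft_{f,C}(n_1,\ldots,n_r)$ differs from $\dft(f,C)$ precisely in the logarithmic term: we have $\dft(f,C) = C - 3\log_3 a$ where $a$ is the leading coefficient, while $\dft_{f,C}(n_1,\ldots,n_r) = C + 3(n_1+\ldots+n_r) - 3\log_3 f(3^{n_1},\ldots,3^{n_r})$. Subtracting, the claim $\dft_{f,C}(n_1,\ldots,n_r)\le\dft(f,C)$ is equivalent to $3(n_1+\ldots+n_r) - 3\log_3 f(3^{n_1},\ldots,3^{n_r}) \le -3\log_3 a$, i.e. $3^{n_1+\ldots+n_r}\cdot a \le f(3^{n_1},\ldots,3^{n_r})$. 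So I would reduce the whole of part (2) to the inequality $a\cdot 3^{n_1+\ldots+n_r}\le f(3^{n_1},\ldots,3^{n_r})$, with strict inequality when $r\ge 1$. By Proposition~\ref{polystruct}, $f$ is multilinear with non-negative integer coefficients, nonzero constant term, and leading coefficient $a$ (the coefficient of $x_1\cdots x_r$); hence $f(3^{n_1},\ldots,3^{n_r})$ is a sum of non-negative terms, one of which is $a\cdot 3^{n_1+\ldots+n_r}$ (the top monomial evaluated) and another of which is the nonzero constant term (a positive integer, since $f$ has non-negative integer coefficients and nonzero constant term). When $r\ge 1$ these two monomials are distinct, so the constant term contributes a strictly positive surplus, giving the strict inequality; when $r=0$ the polynomial is just the constant $a$ and we get equality.

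For part (1), the inequality $\dft(\xpdd{f}(3^{n_1},\ldots,3^{n_{r+1}}))\le \dft_{f,C}(n_1,\ldots,n_r)$ should follow by unwinding the definitions and applying Proposition~\ref{basicub}. Write $N = \xpdd{f}(3^{n_1},\ldots,3^{n_{r+1}}) = f(3^{n_1},\ldots,3^{n_r})\cdot 3^{n_{r+1}}$. By Proposition~\ref{basicub}, $\cpx{N}\le C + 3(n_1+\ldots+n_{r+1})$. Therefore
\[
\dft(N) = \cpx{N} - 3\log_3 N \le C + 3(n_1+\ldots+n_{r+1}) - 3\log_3\bigl(f(3^{n_1},\ldots,3^{n_r})3^{n_{r+1}}\bigr),
\]
and since $\log_3(f(3^{n_1},\ldots,3^{n_r})3^{n_{r+1}}) = \log_3 f(3^{n_1},\ldots,3^{n_r}) + n_{r+1}$, the $n_{r+1}$ terms cancel and the right-hand side is exactly $\dft_{f,C}(n_1,\ldots,n_r)$. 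For the "difference is an integer" claim, I would argue that $C + 3(n_1+\ldots+n_r)$ is an upper bound for $\cpx{f(3^{n_1},\ldots,3^{n_r})}$ as well, so that both $\dft_{f,C}(n_1,\ldots,n_r)$ and $\dft(N)$ are of the form (integer) $- 3\log_3 N'$ for natural numbers $N'$ that differ by the factor $3^{n_{r+1}}$; then the difference $\dft_{f,C}(n_1,\ldots,n_r) - \dft(N)$ equals $(\text{some integer}) - 3\log_3(f(3^{n_1},\ldots,3^{n_r})) + 3\log_3(f(3^{n_1},\ldots,3^{n_r})3^{n_{r+1}}) = (\text{integer})$, using that $3\log_3 3^{n_{r+1}} = 3n_{r+1}\in\Z$, together with the fact (Theorem~\ref{oldprops}, part (2), applied with the exponent $n_{r+1}$) that $\dft(N') - \dft(3^{n_{r+1}}N')$ is a non-negative integer.

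I expect the main obstacle to be the integrality assertion in part (1) rather than either inequality. The inequalities are essentially bookkeeping on top of Proposition~\ref{basicub} and Proposition~\ref{polystruct}. The integrality claim is more delicate because it is really a statement about the \emph{actual} complexity $\cpx{N}$, not just the upper bound from Proposition~\ref{basicub}: one wants $\dft(N) - \dft_{f,C}(n_1,\ldots,n_r) \in \Z$, and this should be deduced from part (2) of Theorem~\ref{oldprops}, which says $\dft(m) - \dft(3^k m)$ is always a non-negative integer — applied here with $m = f(3^{n_1},\ldots,3^{n_r})$ and $k = n_{r+1}$ — combined with the observation that $\dft_{f,C}(n_1,\ldots,n_r) - \dft(f(3^{n_1},\ldots,3^{n_r}))$ is itself a non-negative integer (because $C + 3(n_1+\ldots+n_r)$ is an integer upper bound on $\cpx{f(3^{n_1},\ldots,3^{n_r})}$, so the two differ by a natural number, and the $\log$ terms match). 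Chaining these two integral differences gives the result. I would double-check that the direction of all the inequalities is consistent with "non-negative integer" in each invocation so that the signs work out.
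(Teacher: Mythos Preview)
Your argument is correct. The paper does not actually prove this proposition here; it simply cites Proposition~4.9 and Corollary~4.14 of \cite{paperwo}. Your direct derivation from Proposition~\ref{polystruct}, Proposition~\ref{basicub}, and Theorem~\ref{oldprops}(2) is exactly what a self-contained proof looks like and matches how those cited results were originally obtained.

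One small simplification for the integrality claim in part~(1): you do not need to pass through $\dft(m)$ and invoke Theorem~\ref{oldprops}(2) at all. Writing $m=f(3^{n_1},\ldots,3^{n_r})$ and $N=m\cdot 3^{n_{r+1}}$, a direct computation gives
\[
\dft_{f,C}(n_1,\ldots,n_r)-\dft(N)
=\bigl(C+3(n_1+\ldots+n_r)-3\log_3 m\bigr)-\bigl(\cpx{N}-3\log_3 m-3n_{r+1}\bigr)
=C+3(n_1+\ldots+n_{r+1})-\cpx{N},
\]
which is a nonnegative integer by Proposition~\ref{basicub}. Your chain through $\dft(m)$ works too, but this is shorter.
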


\begin{proof}
This is a combination of Proposition~4.9 and Corollary~4.14 from \cite{paperwo}.
\end{proof}

In fact, not only is $\dft(f,C)$ an upper bound on the values of $\dft_{f,C}$,
it is the least upper bound:

\begin{prop}
\label{supdfts}
Let $(f,C)$ be a low-defect pair, say of degree $r$.  Then $\dft_{f,C}$ is a
strictly increasing function in each variable, and
\[ \dft(f,C) = \sup_{k_1,\ldots,k_r} \dft_{f,C}(k_1,\ldots,k_r).\]
\end{prop}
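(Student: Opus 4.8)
The plan is to prove the two assertions in Proposition~\ref{supdfts} together by induction on the structure of the low-defect pair $(f,C)$, following the three clauses of Definition~\ref{polydef}. The monotonicity statement, that $\dft_{f,C}$ is strictly increasing in each variable, is what makes the supremum computation work, so I would establish both simultaneously. In each case, I would unwind the definition
\[ \dft_{f,C}(n_1,\ldots,n_r) = C + 3(n_1+\cdots+n_r) - 3\log_3 f(3^{n_1},\ldots,3^{n_r}), \]
and the definition of $\dft(f,C) = C - 3\log_3 a$ where $a$ is the leading coefficient, and compare.

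\textbf{Base case.} If $f = k$ is a constant polynomial (degree $0$), then $r=0$, the leading coefficient equals $k$, both sides equal $C - 3\log_3 k$, there are no variables, and the monotonicity claim is vacuous. (The strictness in Proposition~\ref{dftbd}(2) correctly does not apply here since $r=0$.)

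\textbf{Inductive step, clause (2): $f = f_1\otimes f_2$, $C = C_1+C_2$.} Here $f(3^{n_1},\ldots) = f_1(3^{n_1},\ldots,3^{n_{r_1}})\cdot f_2(3^{n_{r_1+1}},\ldots,3^{n_{r_1+r_2}})$, so $\log_3 f$ splits as a sum, and hence $\dft_{f,C} = \dft_{f_1,C_1} + \dft_{f_2,C_2}$ as functions (on disjoint variable sets). Strict monotonicity in each variable is then inherited from the corresponding factor. For the supremum: the leading coefficient of $f_1\otimes f_2$ is the product $a_1 a_2$ of the leading coefficients (by Proposition~\ref{polystruct}, both nonzero), so $\dft(f,C) = \dft(f_1,C_1)+\dft(f_2,C_2)$, and the supremum of a sum of functions in independent variables is the sum of the suprema.

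\textbf{Inductive step, clause (3): $f = g\otimes x_1 + c$, $C = C'+D$} where $(g,C')$ is a low-defect pair of degree $r-1$ and $D\ge\cpx{c}$. Writing $m$ for the last variable, I would compute
\[ \dft_{f,C}(n_1,\ldots,n_{r-1},m) = C' + D + 3(n_1+\cdots+n_{r-1}) + 3m - 3\log_3\!\big(g(3^{n_1},\ldots)\,3^m + c\big). \]
Let $G = g(3^{n_1},\ldots,3^{n_{r-1}})$, which is a positive integer with value at least the leading coefficient considerations of $g$. Then the expression is $C'+D+3(n_1+\cdots+n_{r-1}) - 3\log_3(G + c\cdot 3^{-m})$. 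As $m\to\infty$, $c\cdot 3^{-m}\to 0$ monotonically decreasing, so $-3\log_3(G+c3^{-m})$ is strictly increasing in $m$ (using $c\ge 0$; if $c=0$ it's constant, but clause (3) with $c=0$ still has $D\ge\cpx{0}$ which is a degenerate case I should check is handled — in fact $0$ is not a natural number in the relevant convention, or $c$ ranges over positive integers, so $c\ge 1$ and strictness holds), and its supremum over $m$ is $-3\log_3 G$. Strict monotonicity in the variables $n_1,\ldots,n_{r-1}$ follows because $G = g(3^{n_1},\ldots)$ is strictly increasing in each (leading coefficient nonzero plus all coefficients nonnegative, or alternatively from the inductive hypothesis applied to $g$), hence $-3\log_3(G+c3^{-m})$ is strictly increasing in each $n_i$ as well. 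Taking the supremum first over $m$, then over the remaining variables:
\[ \sup_{n_1,\ldots,n_{r-1},m}\dft_{f,C} = \sup_{n_1,\ldots,n_{r-1}}\Big(C'+D+3\textstyle\sum n_i - 3\log_3 g(3^{n_1},\ldots)\Big) = D + \sup \dft_{g,C'} = D + \dft(g,C'). \]
Finally, the leading coefficient of $g\otimes x_1 + c$ equals the leading coefficient $a$ of $g$ (multiplying by $x_1$ and adding a constant does not change it), so $\dft(f,C) = C'+D - 3\log_3 a = D + \dft(g,C')$, matching.

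\textbf{Main obstacle.} I expect the only real subtlety is the order-of-suprema argument in clause (3): one must justify that $\sup_{n_1,\ldots,n_{r-1},m}$ of the two-part expression equals the supremum over $n_1,\ldots,n_{r-1}$ of the inner supremum over $m$, which is fine for suprema of real-valued functions bounded above (and Proposition~\ref{dftbd}(2) already guarantees boundedness above by $\dft(f,C)$). One should also be slightly careful that ``strictly increasing'' is with respect to the integer arguments $n_i\in\N$, i.e., $\dft_{f,C}(\ldots,n_i+1,\ldots) > \dft_{f,C}(\ldots,n_i,\ldots)$, which is exactly what the monotonicity of $G$ and of $-\log_3$ deliver. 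Everything else is bookkeeping through the definitions.
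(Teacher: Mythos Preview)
Your structural-induction approach is viable and genuinely different from the paper's argument. The paper avoids induction altogether by introducing the \emph{reverse polynomial} $g(x_1,\ldots,x_r) = x_1\cdots x_r\, f(x_1^{-1},\ldots,x_r^{-1})$ and observing that
\[ \dft_{f,C}(k_1,\ldots,k_r) = C - 3\log_3 g(3^{-k_1},\ldots,3^{-k_r}). \]
Since $g$ has nonnegative coefficients and (because the constant term of $f$ is nonzero) its $x_1\cdots x_r$-coefficient is positive, $g$ is strictly increasing in each variable on positive reals; hence $g(3^{-k_1},\ldots,3^{-k_r})$ is strictly decreasing in each $k_i$, and both the monotonicity and the supremum fall out at once. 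This is shorter and handles all low-defect polynomials uniformly, whereas your induction follows the recursive construction case by case.

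However, your clause~(3) contains a real error. You argue that since $G = g(3^{n_1},\ldots,3^{n_{r-1}})$ is strictly increasing in each $n_i$, ``hence $-3\log_3(G+c\,3^{-m})$ is strictly increasing in each $n_i$ as well.'' This inference is backwards: $G$ increasing makes $-3\log_3(G+c\,3^{-m})$ strictly \emph{decreasing} in $n_i$. What you actually need is that the full quantity $3\sum_j n_j - 3\log_3(G+c\,3^{-m})$ is increasing in $n_i$, and that is a genuine competition between the linear term and the logarithm. The fix is to use the inductive hypothesis on $(g,C')$ in the right form: strict monotonicity of $\dft_{g,C'}$ in $n_i$ says precisely that $G(n_i{+}1) < 3\,G(n_i)$ (other variables fixed), whence
\[ G(n_i{+}1) + c\,3^{-m} \;<\; 3\,G(n_i) + c\,3^{-m} \;<\; 3\bigl(G(n_i) + c\,3^{-m}\bigr), \]
and this gives $\dft_{f,C}(\ldots,n_i{+}1,\ldots,m) > \dft_{f,C}(\ldots,n_i,\ldots,m)$ as required. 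With this repair, the rest of your outline goes through.
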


\begin{proof}
We can define $g$, the reverse polynomial of $f$:
\[g(x_1,\ldots,x_r)=x_1\ldots x_r f(x_1^{-1},\ldots,x_r^{-1}).\]
So $g$ is a multilinear polynomial in $x_1, \ldots, x_r$, with the coefficient
of $\prod_{i\in S} x_i$ in $g$ being the coefficient of $\prod_{i\notin S} x_i$
in $f$.  By Proposition~\ref{polystruct}, $f$ has nonnegative coefficients, so
so does $g$; since the constant term of $f$ does not vanish, the $x_1\ldots x_r$
term of $g$ does not vanish. Hence $g$ is strictly increasing in each variable.

Then
\begin{eqnarray*}
\dft_{f,C}(k_1,\ldots,k_r)=C+3(k_1+\ldots+k_r)-3\log_3 f(3^{k_1},\ldots,3^{k_r})
\\ = C-3\log_3 \frac{f(3^{k_1},\ldots,3^{k_r})}{3^{k_1+\ldots+k_r}}
= C-3\log_3 g(3^{-k_1},\ldots,3^{-k_r})
\end{eqnarray*}
which is strictly increasing in each variable, as claimed.  Furthermore, if
$a$ is the leading coefficient of $f$, then it is also the constant term of $g$,
and so
\[\inf_{k_1,\ldots,k_r} g(3^{-k_1},\ldots,3^{-k_r})=a.\]
Thus
\[ \sup_{k_1,\ldots,k_r} \dft_{f,C}(k_1,\ldots,k_r)=C-3\log_3 a=\dft(f,C).\]
\end{proof}

With this, we have the basic properties of low-defect polynomials.

\subsection{Describing numbers of small defect}
\label{buildsec}

Now we will briefly discuss the ``building-up'' method from \cite{paper1} and
\cite{paperwo} that restricts what numbers may lie in $A_r$.  The new
``filtering-down'' half, truncation, will have to wait for
Section~\ref{sectrunc}.

First, we will need the idea of a \emph{leader}:

\begin{defn}
A natural number $n$ is called a \emph{leader} if it is the smallest number with
a given defect.  By part (6) of Theorem~\ref{oldprops}, this is equivalent to
saying that either $3\nmid n$, or, if $3\mid n$, then $\dft(n)<\dft(n/3)$, i.e.,
$\cpx{n}<3+\cpx{n/3}$.
\end{defn}

Let us also define:

\begin{defn}
For any real $r\ge0$, define the set of {\em $r$-defect numbers} $A_r$ to be 
\[A_r := \{n\in\mathbb{N}:\dft(n)<r\}.\]
Define the set of {\em $r$-defect leaders} $B_r$ to be 
\[
B_r:= \{n \in A_r :~~n~~\mbox{is a leader}\}.
\]
\end{defn}

These sets are related by the following proposition from \cite{paperwo}:

\begin{prop}
\label{arbr}
For every $n\in A_r$, there exists a unique $m\in B_r$ and $k\ge 0$ such that
$n=3^k m$ and $\dft(n)=\dft(m)$; then $\cpx{n}=\cpx{m}+3k$.
\end{prop}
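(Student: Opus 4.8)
The plan is to extract everything from part (6) of Theorem~\ref{oldprops}, which already describes the multiplicative structure of a defect-level set. Fix $n\in A_r$ and set $\alpha := \dft(n)$. By part (6), the set $S := \{m : \dft(m)=\alpha\}$ is either of the form $\{m_0 3^j : 0\le j\le L\}$ or of the form $\{m_0 3^j : j\ge 0\}$ for a suitable natural number $m_0$. In either case $m_0$ is the smallest element of $S$, i.e.\ the smallest number with defect $\alpha$, so $m_0$ is a \emph{leader} by definition. Moreover $\dft(m_0)=\alpha=\dft(n)<r$, so $m_0\in B_r$.

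For existence: since $\dft(n)=\alpha$ we have $n\in S$, so $n=3^k m_0$ for some $k\ge 0$, and $\dft(n)=\alpha=\dft(m_0)$; take $m:=m_0$. The complexity identity then follows from part (2) of Theorem~\ref{oldprops}: we have $\dft(3^k m)=\dft(m)$, which is exactly the equality case there, so $\cpx{n}=\cpx{3^k m}=\cpx{m}+3k$.

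For uniqueness: suppose $m,m'\in B_r$ and $k,k'\ge 0$ satisfy $n=3^k m=3^{k'}m'$ with $\dft(n)=\dft(m)=\dft(m')$. Then $m,m'\in S$ and both are leaders; being a leader means being the smallest number with its defect, hence $m=\min S=m_0=m'$. From $3^k m=3^{k'}m$ we then get $k=k'$, so the pair $(m,k)$ is unique.

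The argument is essentially bookkeeping once part (6) of Theorem~\ref{oldprops} is available, so I do not anticipate a genuine obstacle. The only point that deserves a moment's care is the identification of the distinguished least element in part (6) with a leader, but this is immediate from the definition of a leader as the smallest number of a given defect; similarly one should note that $m_0$ automatically lands in $B_r$ because it shares the defect of $n$, which is strictly below $r$.
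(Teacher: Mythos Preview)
Your proof is correct. The paper itself does not reprove this proposition; it simply cites it from \cite{paperwo}. Your argument, by contrast, is self-contained within the present paper: you extract everything from Theorem~\ref{oldprops}, using part~(6) to identify the level set $\{m:\dft(m)=\alpha\}$ as a geometric progression in powers of $3$ with a least element $m_0$, recognizing $m_0$ as a leader by definition, and then invoking the equality case of part~(2) for the complexity identity. The uniqueness step is also sound, since two leaders with the same defect must both be the minimum of the same level set. This is exactly the kind of short derivation one would expect the cited paper to contain, so there is no meaningful divergence in approach---you have simply filled in what the present paper delegates to a reference.
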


Because of this, we can focus on describing $B_r$, and derive $A_r$ from it.

The paper \cite{paper1} showed how to inductively build up coverings of the sets
$B_r$.   It provided the base case \cite{paper1} in the form of the following
theorem:

\begin{thm}
\label{finite}
For every $\alpha$ with $0<\alpha<1$, the set of leaders $B_\alpha$ is a finite
set.
\end{thm}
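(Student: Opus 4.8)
The plan is to argue by strong induction on $n$, using the complexity recursion $\cpx{n} = \min\{\cpx{a} + \cpx{b} : ab=n \text{ or } a+b=n,\ a,b<n\}$ (valid for $n>1$), and to reduce each leader of defect $<\alpha$ to a product of boundedly many bounded ``building blocks.'' Set $\rho = 3^{(1-\alpha)/3}$, which is $>1$ since $\alpha<1$, and fix an integer $M_0 \ge 2 + 2/(\rho-1)$. Note every $n\in B_\alpha$ has $n\ge 2$, since $\dft(1)=1\ge\alpha$.

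First I would rule out the additive case when $n$ is large. Suppose $n\in B_\alpha$, $n>M_0$, and $n=a+b$ with $a\ge b\ge 1$ and $\cpx{n}=\cpx{a}+\cpx{b}$. Writing $\cpx{a}=\cpx{n}-\cpx{b}$ and $\cpx{n}=\dft(n)+3\log_3 n$ gives $\dft(a)=\dft(n)-\cpx{b}+3\log_3\frac{a+b}{a}$. Since $\dft(a)\ge 0$, $\frac{a+b}{a}\le 2$, and $3\log_3 2<2$ while $\dft(n)<\alpha<1$, this forces $\cpx{b}\le 2$, so $b\in\{1,2\}$. Feeding this back, $\dft(a)\ge 0$ gives $3\log_3\frac{a+b}{a}\ge\cpx{b}-\dft(n)>1-\alpha$, hence $\frac{a+b}{a}>\rho$ and $a<b/(\rho-1)\le 2/(\rho-1)$, so $n=a+b<M_0$ --- a contradiction. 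This is precisely where $\alpha<1$ is essential; at $\alpha=1$ this bound on $a$ collapses, matching the fact that $B_1$ is infinite.

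Next I would treat the multiplicative case $n=ab$ with $1<a,b<n$ and $\cpx{n}=\cpx{a}+\cpx{b}$. Then $\dft(n)=\dft(a)+\dft(b)$, so $\dft(a),\dft(b)\le\dft(n)<\alpha$, and each of $a,b$ is again a leader: if $3\mid a$ then $3\mid n$, and the leader inequality $\cpx{n}<3+\cpx{n/3}$ together with $\cpx{n/3}\le\cpx{a/3}+\cpx{b}$ yields $\cpx{a}<3+\cpx{a/3}$; if $3\nmid a$ then $a$ is automatically a leader. Moreover no factor can equal $3$, for then $\cpx{n}=3+\cpx{b}=3+\cpx{n/3}$ would contradict the leader condition, and no higher power of $3$ is a leader at all. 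So by strong induction every $n\in B_\alpha$ is either $\le M_0$ or equals a product $c_1\cdots c_k$ with $k\ge 2$, each $c_i\in B_\alpha$ a leader with $2\le c_i\le M_0$ and $c_i$ not a power of $3$, and $\cpx{n}=\sum_i\cpx{c_i}$ --- whence $\dft(n)=\sum_i\dft(c_i)$.

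To conclude: since each building block $c_i$ is not a power of $3$, part~(8) of Theorem~\ref{oldprops} gives $\dft(c_i)>0$. As the $c_i$ lie in the finite set $\{2,\dots,M_0\}$, their possible defects form a finite set of positive reals; let $\epsilon>0$ be its minimum (if that set is empty there is nothing left to prove). Then for $n=c_1\cdots c_k\in B_\alpha$ with $n>M_0$ we have $k\epsilon\le\sum_i\dft(c_i)=\dft(n)<\alpha$, so $k<\alpha/\epsilon$ and $n\le M_0^{\alpha/\epsilon}$; combined with the finiteness of $B_\alpha\cap\{1,\dots,M_0\}$, this shows $B_\alpha$ is finite. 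I expect the additive step to be the main obstacle --- extracting from $\dft(n)<\alpha<1$ both that the smaller summand has complexity at most $2$ and that the larger summand is then bounded. Everything after that is bookkeeping, together with the observation that a multiplicative factor of a leader is again a leader and is never $3$, which is exactly what prevents the defect-zero numbers (the powers of $3$) from appearing among the building blocks.
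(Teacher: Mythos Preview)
Your argument is correct. The paper itself does not give a proof of this theorem; it is quoted from \cite{paper1} as the base case of the inductive covering construction, so there is no in-paper proof to compare against directly.

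That said, your approach is essentially the natural one and matches the spirit of the classification in \cite{paper1}. You use the complexity recursion to split into additive and multiplicative cases; the additive case is disposed of by the key estimate $\cpx{b}\le \dft(n)+3\log_3 2<1+3\log_3 2<3$, forcing $b\in\{1,2\}$, and then $\dft(a)\ge 0$ bounds $a$ (this is exactly where $\alpha<1$ enters). The multiplicative case is handled by the observations that $\dft$ is additive over good factorizations, that factors of a leader in a good factorization are again leaders, and that no such factor can be a power of $3$; these combine to give a bounded-length product of building blocks of bounded size. All of the facts you invoke (the recursion for $\cpx{n}$, Theorem~\ref{oldprops}(1) and (8), and the leader characterization) are available in the paper.

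Two very minor remarks. First, your inequality $\dft(1)=1\ge\alpha$ is actually strict since $\alpha<1$; this does not affect anything. Second, when you take $\epsilon$ to be the minimum defect among the building blocks, it suffices (and is perhaps cleaner) to take $\epsilon=\min\{\dft(m):2\le m\le M_0,\ m\ \text{not a power of}\ 3\}$, which is automatically positive by Theorem~\ref{oldprops}(8) and does not depend on knowing $B_\alpha\cap\{2,\ldots,M_0\}$ in advance.
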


It then showed how to inductively build up coverings for the sets $B_\alpha$,
$B_{2\alpha}$, $B_{3\alpha}$, again subject to the restriction that $\alpha<1$.
By applying this with $\alpha$ arbitrarily close to $1$, the paper
\cite{paperwo} obtained the following result:

\begin{thm}
\label{oldmainthm}
For any real $r\ge 0$, there exists a finite covering set $\sS_r$ for $B_r$.
Furthermore, we can choose $\sS_r$ such that each $(f,C)\in \sS_r$ has degree at
most $\lfloor r \rfloor$.
\end{thm}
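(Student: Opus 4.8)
The plan is to reproduce the inductive ``building-up'' construction of \cite{paper1}, assembled as in \cite{paperwo}. Fix a step size $\alpha\in(0,1)$ and build, by induction on the positive integer $k$, a finite set $\sS_{k\alpha}$ of low-defect pairs covering $B_{k\alpha}$, each of degree at most $k-1$. This suffices: given $r$, pick $\alpha$ with $\tfrac{r}{\lfloor r\rfloor+1}\le\alpha<1$ (a nonempty range, since $r<\lfloor r\rfloor+1$); then the least $k$ with $k\alpha\ge r$ satisfies $k\le\lfloor r\rfloor+1$, so $B_r\subseteq B_{k\alpha}$ is covered by a set of degree at most $k-1\le\lfloor r\rfloor$, and we set $\sS_r:=\sS_{k\alpha}$. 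The base case $k=1$ is precisely Theorem~\ref{finite}: $B_\alpha$ is finite, hence covered by the finite set $\{(m,\cpx m):m\in B_\alpha\}$ of degree-$0$ pairs.

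For the inductive step, working within stage $k$ by a secondary induction on the size of the leader, I would show that every leader $n$ with $\dft(n)<k\alpha$ is $3$-represented by a low-defect polynomial assembled from ones already on hand --- those covering $B_{(k-1)\alpha}$, or those attached to smaller leaders at stage $k$ --- via one of the two generating rules of Definition~\ref{polydef}. One invokes the recursion $\cpx n=\cpx a+\cpx b$ with $n=ab$ or $n=a+b$, $a\ge b$. In the multiplicative case, $n$ being a leader forces neither factor to be a power of $3$, so $\dft(n)=\dft(a)+\dft(b)$ with both summands positive; passing to the leaders underlying $a$ and $b$ (of these same, strictly smaller defects) and applying $\otimes$ gives a representing polynomial for $n$. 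In the additive case, the identity $\dft(n)=\dft(a)+\dft(b)-3\log_3(\tfrac1a+\tfrac1b)$ together with $\tfrac1a+\tfrac1b\le\tfrac2b$ forces $3\log_3 b\le\dft(n)+3\log_3 2$, so $b$ ranges over a finite set, and (when $b\ge2$) the same identity gives $\dft(a)<\dft(n)$; thus $n$ arises by applying $(f,C)\mapsto(f\otimes x_1+c,\,C+\cpx c)$ --- the only degree-raising rule, and it raises degree by exactly $1$ --- to a representing polynomial of the leader underlying $a$, with $c=b$. The degenerate configurations ($b=1$, where one instead tracks $n$ and $n-1$ in tandem; a power-of-$3$ factor; or $n$ already covered at an earlier stage) are handled directly using Theorem~\ref{oldprops}. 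Collecting the finitely many outcomes shows $\sS_{k\alpha}$ can be taken finite and of degree $\le k-1$.

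I expect the real work to lie entirely in this inductive step, in two intertwined points that the quantitative defect estimates of \cite{paper1} exist to supply: well-foundedness and termination --- each split yields pieces of strictly smaller complexity whose defects are strictly smaller or already treated, so only finitely many polynomials are ever introduced; and the degree bookkeeping --- that the degree-raising rule is triggered only by an additive split, and that taking $\alpha<1$ caps the number of such splits encountered per stage at one, which is exactly what sharpens the crude bound ``degree $\le$ number of additive layers'' into ``degree $\le\lfloor r\rfloor$''. Both rest on Theorem~\ref{finite} and on inequalities asserting that a sum or product cannot shrink the defect too quickly --- the technical heart carried over from \cite{paper1}.
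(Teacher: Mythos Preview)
The paper does not prove Theorem~\ref{oldmainthm} in the text; it is quoted from \cite{paperwo}, with the surrounding discussion explaining that the argument there proceeds exactly as you describe: fix a step size $\alpha<1$, use Theorem~\ref{finite} as the base case, and inductively build covering sets for $B_{\alpha}, B_{2\alpha}, B_{3\alpha},\ldots$, taking $\alpha$ arbitrarily close to $1$ to obtain the degree bound $\lfloor r\rfloor$. Your proposal is therefore aligned with the paper's (cited) approach, and your framing of the choice of $\alpha$ and the role of the additive rule as the sole degree-raiser is correct.

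One caution on the inductive step: the actual case analysis from \cite{paper1} is more delicate than your sketch suggests, as can be seen from the nonstrict analogue stated in Appendix~\ref{nonstrict}. In the multiplicative case, $\dft(a)+\dft(b)=\dft(n)$ with both summands positive does not by itself place both factors in $B_{(k-1)\alpha}$; the correct bookkeeping is that $u\in B_{i\alpha}$ and $v\in B_{j\alpha}$ with $i+j=k+2$ and $2\le i,j\le k$, so the degrees add to at most $(i-1)+(j-1)=k$, which is exactly what is needed. In the additive case there is an extra wrinkle (case (3) of the appendix theorem): one may need to first peel off a small multiplicative factor $v\in B_\alpha$ before the additive split $a+b$ appears, and the $b=1$ case requires the separate ``$T_\alpha$'' analysis you allude to. You correctly flag that the real work lies here and defer it to \cite{paper1}; just be aware that a clean two-case dichotomy (pure product versus pure sum) is not quite enough, and the secondary induction you propose on the size of the leader would need to be woven together with the primary one more carefully than your outline indicates.
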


In this paper, we will actually not need the bound on the degree of the
polynomials; the truncation operation will always output low-defect pairs with
appropriately bounded degree, regardless of the degrees of the inputs.  This
allows the possibility of building up a covering $\sS_r$ by a different method,
rather than the particular method of \cite{paperwo}.  This includes the
possibility of using the same method, but with a smaller step size $\alpha$.

\section{Low-defect expressions, the nesting ordering, and structure of
low-defect polynomials}
\label{structure}

In this section we will go further into the structure of low-defect polynomials.
In order to do this, we will investigate the expressions that give rise to them.
That is to say, if we have a low-defect polynomial $f$, it was constructed
according to rules (1)--(3) in Definition~\ref{polydef}; each of these rules
though gives a way not just of building up a polynomial, but an expression.  For
instance, we can build up the polynomial $4x+2$ by using rule (1) to make $2$,
then using rule (3) to make $2x+1$, then using rule (2) to make $2(2x+1)=4x+2$.
The polynomial $4x+2$ itself does not remember its history, of course; but
perhaps we want to remember its history -- in which we do not want to consider
the \emph{polynomial} $4x+2$, but rather the \emph{expression} $2(2x+1)$, which
is different from the expression $4x+2$, which has a different history.

Strictly speaking, it is possible to prove many of the theorems about low-defect
polynomials in this and the next section purely by structural induction, using
just the rules (1)--(3) in Definition~\ref{polydef}.  But introducing low-defect
expressions is more enlightening; it makes it clear why, for instance, the
nesting ordering (see Definition~\ref{defnest}) takes the form of a forest.

So, with that, we define:

\begin{defn}
A \emph{low defect expression} is defined to be a an expression in positive
integer constants, $+$, $\cdot$, and some number of variables, constructed
according to the following rules:
\begin{enumerate}
\item Any positive integer constant by itself forms a low-defect expression.
\item Given two low-defect expressions using disjoint sets of variables, their
product is a low-defect expression.  If $E_1$ and $E_2$ are low-defect
expressions, we will use $E_1 \otimes E_2$ to denote the low-defect expression
obtained by first relabeling their variables to disjoint and then multiplying
them.
\item Given a low-defect expression $E$, a positive integer constant $c$, and a
variable $x$ not used in $E$, the expression $E\cdot x+c$ is a low-defect
expression.  (We can write $E\otimes x+c$ if we do not know in advance that $x$
is not used in $E$.)
\end{enumerate}
\end{defn}

And, naturally, we also define:

\begin{defn}
We define an \emph{augmented low-defect expression} to be an expression of the
form $E\cdot x$, where $E$ is a low-defect expression and $x$ is a variable not
appearing in $E$.  If $E$ is a low-defect expression, we also denote the
augmented low-defect expression $E\otimes x$ by $\xpdd{E}$.
\end{defn}

It is clear from the definitions that evaluating a low-defect expression yields
a low-defect polynomial, and that evaluating an augmented low-defect expression
yields an augmented low-defect polynomial.  Note also that low-defect
expressions are read-once expressions, so, as mentioned earlier, low-defect
polynomials are read-once polynomials.

\subsection{Equivalence and the tree representation}

We can helpfully represent a low-defect expression by a rooted tree, with the
vertices and edges both labeled by positive integers.  Note, some information is
lost in this representation -- but, as it happens, nothing we will care about;
it turns out that while knowing some of the history of a low defect polynomial
is helpful, knowing the full expression it originated from is more than is
necessary.  The tree representation is frequently more convenient to work with
than an expression, as it does away with such problems as, for instance, $4$ and
$2\cdot 2$ being separate expressions.  In addition, trees can be treated more
easily combinatorially; in a sequel paper\cite{seqest}, we will take advantage
of this to estimate how many elements of $A_r$ lie below a given bound $x$.  So
we define:

\begin{defn}
Given a low-defect expression $E$, we define a corresponding \emph{low-defect
tree} $T$, which is a rooted tree where both edges and vertices are labeled with
positive integers.  We build this tree as follows:
\begin{enumerate}
\item If $E$ is a constant $n$, $T$ consists of a single vertex labeled with
$n$.
\item If $E=E'\cdot x + c$, with $T'$ the tree for $E'$, $T$ consists of $T'$
with a new root attached to the root of $T'$.  The new root is labeled with a
$1$, and the new edge is labeled with $c$.
\item If $E=E_1 \cdot E_2$, with $T_1$ and $T_2$ the trees for $E_1$ and $E_2$
respectively, we construct $E$ by ``merging'' the roots of $E_1$ and $E_2$ --
that is to say, we remove the roots of $E_1$ and $E_2$ and add a new root, with
edges to all the vertices adjacent to either of the old roots; the new edge
labels are equal to the old edge labels.  The label of the new root is equal
to the product of the labels of the old roots.
\end{enumerate}
\end{defn}

See Figure~\ref{treeexamp} for an example illustrating this construction.

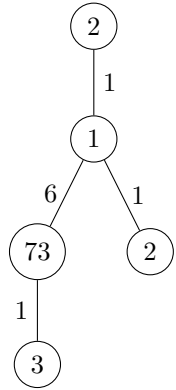
\begin{figure}
\caption{Low-defect tree for the expression $2((73(3x_1+1)x_2+6)(2x_3+1)x_4+1)$.}
\label{treeexamp}
\begin{center}
\begin{tikzpicture}
\node[circle,draw] {$2$}
	child {node[circle,draw] {$1$}
		child { node[circle,draw] {$73$}
			child {node[circle,draw] {$3$}
				edge from parent node[left] {$1$}}
			edge from parent node[left] {$6$}}
		child {node[circle,draw] {$2$}
			edge from parent node[right]{$1$}}
		edge from parent node[right]{$1$}};
\end{tikzpicture}
\end{center}
\end{figure}

We can use these trees to define a notion of equivalence for expressions:

\begin{defn}
Two low-defect expressions are said to be equivalent if their corresponding
trees are isomorphic.  (Here isomorphism must preserve both the root and all
labels.)
\end{defn}

Furthermore, every such tree occurs in this way:

\begin{prop}
\label{treeexist}
Every rooted tree, with vertices and edges labeled by positive integers, occurs
(up to isomorphism) as the tree for some low-defect expression.
\end{prop}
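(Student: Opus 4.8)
The plan is to induct on the number of vertices of $T$, producing at each stage an explicit low-defect expression whose associated tree is isomorphic to $T$. For the base case, suppose $T$ is a single vertex carrying the label $m$ (a positive integer); then the constant expression $m$ is a low-defect expression, and by the first clause of the tree construction its tree is exactly $T$.

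For the inductive step, let $v$ be the root of $T$, write $m$ for its label, and let $u_1,\ldots,u_d$ (with $d\ge 1$) be the children of $v$, where the edge $vu_i$ carries the label $c_i$. Let $T_i$ denote the subtree of $T$ rooted at $u_i$; this is a labeled rooted tree with strictly fewer vertices than $T$, so by the inductive hypothesis there is a low-defect expression $E_i$ whose tree is isomorphic to $T_i$. Relabel so that $E_1,\ldots,E_d$ use pairwise disjoint variable sets, choose for each $i$ a fresh variable $x_i$, and set $F_i := E_i\otimes x_i + c_i$; by the third rule for forming low-defect expressions each $F_i$ is a low-defect expression, and by the second clause of the tree construction the tree $S_i$ of $F_i$ is $T_i$ with a new root, labeled $1$, joined to the old root of $T_i$ by an edge labeled $c_i$. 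Now put
\[ E \;:=\; m \otimes F_1 \otimes F_2 \otimes \cdots \otimes F_d. \]
Since $m$ is a constant expression, each $F_i$ is a low-defect expression, and the second rule closes low-defect expressions under $\otimes$ (which is associative), $E$ is a low-defect expression.

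It remains to check that the tree of $E$ is $T$. By the third clause of the tree construction, forming a product $E'\otimes E''$ ``merges'' the two roots: the resulting root carries the product of the two old root labels and is adjacent to every vertex that was adjacent to either old root, with all surviving edge labels unchanged. This merging operation is associative (a routine induction), so the tree of $E = m\otimes F_1\otimes\cdots\otimes F_d$ is obtained by merging, all at once, the roots of the one-vertex tree labeled $m$ and of $S_1,\ldots,S_d$. The root of the one-vertex tree has no incident edges; the root of each $S_i$ has exactly one incident edge, namely the one labeled $c_i$ joining it to the root of $T_i$. Hence the merged root has label $m\cdot 1\cdots 1 = m$ and is joined by edges labeled $c_1,\ldots,c_d$ to the roots of $T_1,\ldots,T_d$, while those subtrees are otherwise untouched; this is precisely $T$, completing the induction.

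The proof is essentially bookkeeping, and I do not expect a serious obstacle. The one point that needs a little care is that the rules for building low-defect expressions give no direct way to prescribe the label of the root of a tree that has at least one child (the product $F_1\otimes\cdots\otimes F_d$ always has root labeled $1$), which is why the construction multiplies in the constant $m$. One also needs the (easy) observation that the root-merging operation of the third tree rule is associative, so that the iterated $\otimes$ above has the effect claimed; I would record this as a small remark rather than a separate lemma.
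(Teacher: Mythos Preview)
Your proof is correct and follows essentially the same inductive approach as the paper: induct on the number of vertices, handle the root's children via the inductive hypothesis, and combine using $\otimes$ together with the constant root label. The only organizational difference is that the paper splits into the cases $d=1$ and $d\ge 2$ (applying the inductive hypothesis to trees that already include a relabeled root), whereas your uniform treatment---applying the hypothesis to the subtrees rooted at the children and then explicitly adjoining a root via rule~(3)---avoids that case split; both arrive at the same expression $m\cdot(E_1\cdot x_1+c_1)\cdots(E_d\cdot x_d+c_d)$.
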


\begin{proof}
Call the tree $T$.  We prove this by induction on the number of vertices.  If
$T$ has only one vertex, the root, labeled $n$, it occurs as the tree for the
low-defect expression $n$.  Otherwise, the tree has more than one vertex, i.e.,
the root has at least one child.

If the root has only one child, let $T'$ be the tree obtained by deleting the
root of $T$, and let $E'$ be a low-defect expression that yields it.  If the
root is labeled $n$ and the unique edge off of it is labeled $c$, and $x$ is a
variable not appearing in $E'$, then the expression $n(E'\cdot x+c)$ is a
low-defect expression that yields $T$.  (If $n=1$, we may omit the
multiplication by $n$.)

Finally, the root could have more than one child; call its children
$v_1,\ldots,v_r$, and call its label $n$.  Then for $1\le i\le r$, let $T_i$ be
the tree obtained by removing all vertices except the root and the descendants
of $v_i$, and relabeling the root to have a label of $1$.  Then for each $i$ we
can pick a low-defect expression $E_i$ that yields $T_i$; then
the expression $n\cdot E_1\cdots E_r$ is a low-defect expression that
yields $T$.  (Again, if $n=1$, we may omit the multiplication by $n$.)
\end{proof}

Because of Proposition~\ref{treeexist}, we can use the term ``low-defect tree''
to simply refer to a rooted tree with vertices and edges labeled by positive
integers.  Also, among the various expressions in an equivalence class (i.e.,
that yield the same tree), the one constructed by Proposition~\ref{treeexist} is
one we'd like to pick out:

\begin{defn}
Given a low-defect tree $T$, a low-defect expression for it generated by the
method of Proposition~\ref{treeexist} (with multiplications by $1$ omitted) will
be called a \emph{reduced} low-defect expression for $T$.
\end{defn}

As mentioned above, passing from an expression $E$ to its tree $T$ loses a
little bit of information, but not very much.  We can, in fact, completely
characterize when two expressions will yield the same tree:

\begin{prop}
\label{tediousness}
Two low-defect expressions $E$ and $E'$ are equivalent if and only if one can
get from $E$ to the $E'$ by applying the following transformations to
subexpressions:
\begin{enumerate}
\item For low-defect expressions $E_1$ and $E_2$, one may replace $E_1\cdot E_2$
by $E_2 \cdot E_1$.
\item For low-defect expressions $E_1$, $E_2$, and $E_3$, one may replace
$(E_1\cdot E_2)\cdot E_3$ by $E_1\cdot (E_2\cdot E_3)$, and vice versa.
\item For integer constants $n$ and $m$, one may replace $n\cdot m$ by the
constant $nm$; and for an integer constant $k$ with $k=mn$, one may replace $k$
by $m\cdot n$.  This latter rule may only be applied if $k$ does not appear as
an addend in a larger expression.
\item For a low-defect expression $E_1$, one may replace $1\cdot E_1$ by $E_1$,
and vice versa.
\item One may rename all the variables in $E$, so long as distinct variables
remain distinct.  (This transformation can only be applied to $E$ as a whole,
not subexpressions.)
\end{enumerate}
\end{prop}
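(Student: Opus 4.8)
The plan is to prove the ``if and only if'' by routing every low-defect expression through a canonical representative of its equivalence class: the reduced low-defect expression associated to its tree.

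\emph{Transformations preserve the tree.} The ``if'' direction amounts to checking, transformation by transformation, that applying any of (1)--(5) to a subexpression leaves the associated low-defect tree unchanged up to isomorphism; this is immediate from the tree-building rules. Commutativity (1) and associativity (2) of $\otimes$ mirror the fact that the root-merging step in the tree construction is itself commutative and associative (the merged root's label is the product of the old labels and its children are the union of the old children, independent of order or grouping). Rule (3) mirrors the fact that a one-vertex tree labeled $nm$ is exactly the merge of one-vertex trees labeled $n$ and $m$, and its side condition is precisely what keeps $m\cdot n$ from illegally appearing as an addend. Rule (4) mirrors the fact that merging in a one-vertex tree labeled $1$ does nothing. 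Rule (5) is trivial, since variable names are not recorded in the tree. Because each of (1)--(5) is reversible, the relation ``linked by a finite chain of transformations'' is an equivalence relation, and we have just shown it refines equality of trees.

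\emph{Every expression reduces.} For the converse I would prove that every low-defect expression $E$ can be transformed, via (1)--(5), into the reduced low-defect expression of its tree $T$; granting this, two expressions with a common tree $T$ both transform to the reduced expression of $T$, and running one of the two chains backward links them. The claim is proved by strong induction on the size of $E$ using the trichotomy of top-level forms. If $E$ is a constant it is already reduced. If $E = E_1\otimes x + c$, then $T$ is the tree of $E_1$ with a fresh root labeled $1$ attached by an edge labeled $c$; applying the induction hypothesis to $E_1$ and reattaching gives the reduced expression of $T$. If $E$ is a product, I would first use (1) and (2) to flatten it as $F_1\otimes\cdots\otimes F_s$ with no $F_i$ a product, then use (3) to fuse all constant factors into a single constant $m$, deleting it via (4) when $m=1$; this leaves $E$ transformed to $m\otimes H_1\otimes\cdots\otimes H_t$ with each $H_j$ of the form $G_j\otimes x_j + c_j$ (or, if $t=0$, to the single constant $m$, which is reduced). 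One then observes that $T$ has root labeled $m$ with $t$ children, the $j$th carrying the tree of $H_j$ below it, and compares with the recursive construction of the reduced expression of $T$ --- which splits into the $t=1$ and $t\ge 2$ cases and omits the leading $m$ exactly when $m=1$; the induction hypothesis applied to each $G_j$, together with a renaming (5) of the $x_j$, finishes the case.

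\emph{Main obstacle.} The routine but delicate part will be this product case: flattening correctly, bookkeeping the constant factors (and the dual role of $1$, both as a factor to be deleted and as the root label for which the reduced form drops its leading constant), and matching the outcome against the $t=1$ versus $t\ge 2$ branches of the reduced-expression construction. One must also verify that every transformation used to reduce $E'$ can be reversed --- in particular that the constant-splitting direction of (3) is only ever needed on constants occurring as factors, never as addends, so that its side condition holds automatically.
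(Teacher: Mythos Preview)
Your approach is correct and takes a genuinely different route from the paper. The paper proves the converse by a direct simultaneous structural induction on $E$ and $E'$: it flattens each into a product of non-product factors (or recognizes each as a non-product), uses the tree isomorphism to match the factors of $E$ with those of $E'$ via a permutation $\sigma$, applies the inductive hypothesis to each matched pair, and then reassembles using (1)--(2) and a global (5) at the end. You instead route both expressions through a canonical representative---a reduced expression for the common tree---showing that every $E$ transforms into such a form, so that $E$ and $E'$ meet there. Your normal-form strategy is conceptually cleaner and avoids carrying two expressions through the induction; the paper's direct argument avoids fixing a target in advance. Both rest on the same product-flattening maneuver and the same matching of non-product factors against children of the root.

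One small gap to close: the paper's reduced expression for a tree $T$ is only defined up to choices (the order in which children are listed, and which fresh variable names are used), so ``\emph{the} reduced low-defect expression of $T$'' is not literally unique. You therefore also need the easy observation that any two reduced expressions for the same $T$ are interconvertible via (1), (2), and (5); otherwise $E$ and $E'$ might reduce to distinct targets and your chain would not close. This is straightforward (reordering factors and renaming variables suffices), but should be stated.
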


\begin{proof}
It's clear that all these moves do not change the tree.  The problem is proving
that all equivalences come about this way.

Suppose $T$ is the tree for $E$, $T'$ is the tree for $E'$, and $\phi:T\to T'$
is an isomorphism.  We induct on the number of vertices of $T$, the label of the
root, and the structure of $E$ and $E'$.

First we consider the case where either $E$ or $E'$ is a product.  In this case,
we decompose $E$ and $E'$ until we have written each as a product of low-defect
expressions which themselves are not products.  Each of these factors can either
be written as $F\cdot x+c$ for some low-defect expression $F$, some $x$ not
appearing in $F$, and some $c$; or as a natural number constant.  Say
$E=E_1\cdots E_r\cdot n_1\cdots n_s$ and
$E'=E'_1\cdots E'_{r'}\cdot n'_1\cdots n'_{s'}$, where the
$E_i$ and $E'_i$ have the former form and the $n_i$ are constants.  (Due to
rules (1) and (2), we do not need to worry about parenthesization or the order
of the factors.)  Note that by assumption, $r+s,r'+s'\ge 1$, and at least one of
them is at least $2$.
 
Let $T_i$ denote the tree of $E_i$ and $T'_i$ denote the tree of $E'_i$.  Then
we can conclude that the root of $T$ has $r$ children, and that $T_i$ can be
formed from $T$ by removing, along with all their descendants, all the children
of the root except child $i$, and changing the label of the root to $1$.
Similarly with $T'_i$ and the $r'$ children of its root.  Similarly, if we let
$N$ denote the product of the $n_i$, and $N'$ the product of the $n'_i$, we see
that $N$ is the label of the root of $T$, and $N'$ the label of the root of
$T'$.  Since $T$ and $T'$ are isomorphic, then, we have $N=N'$, $r=r'$, and
$\phi$ maps the children of the root of $T$ to the children of the root of $T'$.
This allows us to construct isomorphisms $\phi_i : T_i \to T'_{\sigma(i)}$,
where $\sigma$ is a fixed permutation in the group $S_r$.  By the inductive
hypothesis, then, each $E_i$ can be turned into $E'_{\sigma(i)}$ by use of moves
of type (1)-(5); we can then use rules (1) and (2) to put these back in the
original order.  (Note that rule (5) should be applied all at once, at the end,
so as to ensure that no two distinct variables are ever turned into the same
variable.)

Meanwhile, the product $n_1\cdots n_s$ may be turned into the product
$N$ by moves of type (3) and (4) (type (4) is necessary if $s=0$; note that in
this case we cannot have $r=0$).  But $N=N'$, which can be turned back into the
product $n'_1\cdots n'_{s'}$ by moves of type (3) and (4) as well.
This concludes the case where either $E$ or $E'$ is a product.

In the case where neither $E$ nor $E'$ is a product, $E$ can either be an
integer constant $n$, or it can be of the form $F\cdot x+c$, where $F$ is a
low-defect expression, $x$ is a variable not appearing in $F$, and $c$ is an
integer constant.  In the former case, $T$ has no non-root vertices, so neither
does $T'$; since we assumed $E'$ is not a product, this means it too is an
integer constant $n'$.  However, $n$ is the label of the unique vertex of $T$,
and $n'$ that of $T'$, and since $T\cong T'$, this implies $n=n'$.  Thus $E$ and
$E'$ are simply equal, and no moves need be applied.

Finally, we have the case where $E=F\cdot x+c$ as above.  In this case, we must
also be able to similarly write $E'=F'\cdot x'+c'$, as if $E'$ were a constant,
$E$ would be as well by the above argument.  Let $U$ and $U'$ denote the trees
of $F$ and $F'$, respectively.  Then $T$ consists of $U$ together with a new
root adjoined with a label of $1$, with the unique edge off of it labeled $c$;
and the relation between $T'$, $U'$, and $c'$ is the same.  Then since $T\cong
T'$, we conclude that $c=c'$ and $U\cong U'$.  By the inductive hypothesis,
then, $U$ may be transformed into $U'$ by moves of type (1)-(5); this transforms
$T$ from $F\cdot x+c$ to $F'\cdot y+c$, where $y$ is some variable not appearing
in $F'$.  (Since when applying rule (5), one may have to rename $x$ if one
changes one of the variables of $F$ to $x$.)  One may then apply rule (5) again
to replace $y$ by $x'$, completing the transformation into $E'$.  This proves
the proposition.
\end{proof}

This tells us also:
\begin{cor}
\label{ttof}
If $E_1$ and $E_2$ are equivalent low-defect expressions that both yield the
tree $T$, they also yield the same low-defect polynomial $f$, up to renaming of
the variables.  That is to say, up to renaming of the variables, it is possible
to determine $f$ from $T$.
\end{cor}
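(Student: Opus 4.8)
The plan is to deduce this directly from Proposition~\ref{tediousness}. Since $E_1$ and $E_2$ are equivalent, that proposition tells us we can pass from $E_1$ to $E_2$ by a finite sequence of the five transformations listed there, applied to subexpressions. So it suffices to check that none of these transformations changes the underlying polynomial except possibly by a renaming of variables, and then to induct on the length of the sequence.

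Checking the transformations one at a time: transformations (1), (2), and (4) are instances of the commutativity and associativity of multiplication and of multiplication by $1$, and hence leave the value of the expression, viewed as a polynomial, literally unchanged. Transformation (3) replaces a product $n\cdot m$ of integer constants by the single constant $nm$, or performs such a replacement in reverse; again this does not change the polynomial. Finally, transformation (5) renames the variables (keeping distinct variables distinct), which changes the polynomial by exactly a renaming of variables and nothing more. Since applying a transformation to a subexpression of $E$ changes the polynomial of $E$ in the corresponding way — a polynomial identity inside a subexpression propagates to the whole, and a variable renaming inside a subexpression is a variable renaming of the whole — an induction on the number of transformations used shows that the polynomial of $E_1$ and the polynomial of $E_2$ agree up to renaming of variables.

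For the final sentence of the statement: by the definition of equivalence, together with Proposition~\ref{treeexist}, the isomorphism class of the tree $T$ determines the equivalence class of low-defect expressions yielding $T$; and we have just shown that all expressions in a single equivalence class determine the same polynomial up to renaming of variables. Hence $f$ is determined by $T$ up to renaming of variables, as claimed.

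The only real care needed — and the ``main obstacle,'' such as it is — is the bookkeeping around transformation (5): one must apply it as a single global renaming (as the proof of Proposition~\ref{tediousness} is itself careful to do) so that one never accidentally identifies two distinct variables, and one should note that ``up to renaming of variables'' is genuinely the correct and only slack in the conclusion, since the trees record no information about which variable sits at which leaf.
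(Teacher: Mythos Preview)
Your proof is correct and follows essentially the same approach as the paper: invoke Proposition~\ref{tediousness} and observe that each of the five moves (1)--(5) preserves the value of the expression as a polynomial, except for move (5), which is precisely a renaming of variables. You supply more detail than the paper's two-line version, but the content is the same.
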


\begin{proof}
With the exception of renaming the variables, all of the moves allowed in
Proposition~\ref{tediousness} consist of replacing subexpressions with other
subexpressions that evaluate to the same thing.  This proves the claim.
\end{proof}

Note that inequivalent expressions (distinct trees) can also give rise to the
same polynomial; for instance, $2(2x+1)$ and $4x+2$ are inequivalent expressions
both yielding the polynomial $4x+2$ (see Figure~\ref{inequiv}).  However we will
see in Section~\ref{subsecorder} that from the polynomial $f$ we can recover at
least the ``shape'' of $T$, i.e., the isomorphism class of the rooted but
unlabeled tree underlying $T$.

\begin{figure}
\caption{Two different trees yielding the polynomial $4x+2$}
\label{inequiv}
\begin{center}
\begin{tikzpicture}
\node[circle,draw]{$2$}
	child {node[circle,draw]{$2$} edge from parent node[left] {$1$}};
\end{tikzpicture}
\qquad
\begin{tikzpicture}
\node[circle,draw]{$1$}
	child {node[circle,draw]{$4$} edge from parent node[left] {$2$}};
\end{tikzpicture}
\end{center}
\end{figure}
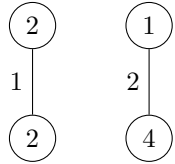

Now, the non-root vertices of the tree correspond to the variables of the
original expression:

\begin{defn}
Let $E$ be a low-defect expression and $T$ the corresponding tree.  We
recursively define a bijection between the variables of $E$ and the non-root
vertices of $T$ as follows:
\begin{enumerate}
\item If $E$ is an integer constant $n$, then it has no variables, and $T$ has
no non-root vertices, and the bijection is the trivial one.
\item If $E=E'\cdot x + c$, with $T'$ the tree for $E'$, then we use the
correspondence between variables of $E'$ and the non-root vertices of $T'$ to
associate variables of $E'$ with vertices of $T'\subseteq T$; and we assign the
root of $T'$ to correspond to the variable $x$.
\item If $E=E_1 \cdot E_2$, with $T_1$ and $T_2$ the trees for $E_1$ and $E_2$
respectively, then we use the correspondence between variables of $E_1$ and
non-root vertices of $T_1$ to associate variables of $E_1$ with vertices of
$T_1\subseteq T$; and we do similarly with $E_2$ and $T_2$.
\end{enumerate}
\end{defn}

See Figure~\ref{bijexamp} for an illustration of this bijection.

\begin{figure}
\caption{Low-defect tree for the expression $2((73(3x_1+1)x_2+6)(2x_3+1)x_4+1)$;
non-root vertices have been marked with corresponding variables in addition to
their labels.}
\label{bijexamp}
\begin{center}
\begin{tikzpicture}
\node[circle,draw] {$2$}
	child {node[circle,draw] {$1,x_4$}
		child { node[circle,draw] {$73,x_2$}
			child {node[circle,draw] {$3,x_1$}
				edge from parent node[left] {$1$}}
			edge from parent node[left] {$6$}}
		child {node[circle,draw] {$2,x_3$}
			edge from parent node[right]{$1$}}
		edge from parent node[right]{$1$}};
\end{tikzpicture}
\end{center}
\end{figure}
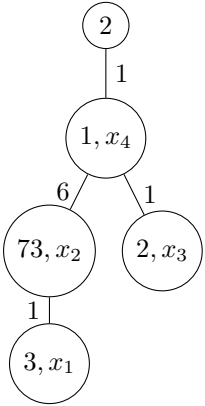

Equivalently, each variable can be thought of as corresponding to an edge rather
than to a non-root vertex; if the variable $x$ corresponds to the vertex $v$, we
can instead think of it as corresponding to the edge between $v$ and the parent
of $v$.  If we think of variables as corresponding to vertices, however, then we
can imagine the root as corresponding to the extra variable in the augmented
low-defect expression $\xpdd{E}$, although this analogy is not perfect.

This bijection, placing the variables of $E$ on the tree, shows us that the
variables of a low-defect expression do not all play the same role.  In
Section~\ref{sectrunc}, we will make extensive use of the variables
corresponding to leaves.  See also Remark~\ref{maxlrem} regarding the variables
corresponding to the children of the root.  In the following subsection, we will
begin to lay out the details of how this works.

\subsection{The nesting order, keys, and anti-keys}
\label{subsecorder}

Given a low-defect expression, we will define a partial order, the nesting
order, on its set of variables.  First, let us make the following observation:

\begin{prop}
Let $E$ be a low-defect expression.  Each variable of $E$ appears exactly once
in $E$, and there is a smallest low-defect subexpression of $E$ that contains
it.
\end{prop}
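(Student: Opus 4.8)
The plan is to prove both assertions by structural induction on the construction of $E$ by rules (1)--(3) of the definition of low-defect expression; fix one such derivation of $E$ before starting. The first assertion, that each variable of $E$ occurs exactly once, is just the read-once property already remarked upon after the definition, and is immediate by induction: rule (1) introduces no variable; under rule (2), $E=E_1\otimes E_2$ uses the disjoint union of the (by induction, each-occurring-once) variables of $E_1$ and $E_2$; and under rule (3), $E=E'\otimes x+c$ adjoins the fresh variable $x$, which occurs once, without disturbing the variables of $E'$.

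For the second assertion, the thing to establish first is a description of the low-defect subexpressions of $E$ in terms of those of its constituents: if $E=E_1\otimes E_2$, these are $E$ itself together with the low-defect subexpressions of $E_1$ and of $E_2$; and if $E=E'\otimes x+c$, these are $E$ itself together with the low-defect subexpressions of $E'$ and those of $c$ (the latter all being constants, hence containing no variables; note that $E'\cdot x$, $x$ alone, and $x+c$ are not low-defect expressions, so they do not contribute). Granting this, fix a variable $x$ of $E$ and write $\sigma(x)$ for the set of low-defect subexpressions of $E$ containing $x$, partially ordered by the ``is a subexpression of'' relation; it is nonempty since $E\in\sigma(x)$. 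In the base case there is nothing to prove. If $E=E_1\otimes E_2$, then by the first assertion $x$ lies in exactly one factor, say $E_1$; every element of $\sigma(x)$ is then either $E$ or a low-defect subexpression of $E_1$ containing $x$, and by the inductive hypothesis the latter have a least element $F$, which, being a subexpression of $E_1\subseteq E$, is the least element of all of $\sigma(x)$. If $E=E'\otimes x_0+c$ and $x=x_0$, then by the subexpression description and freshness of $x_0$ we get $\sigma(x_0)=\{E\}$, trivially with a least element. If $x\ne x_0$, then $x$ occurs in $E'$, every element of $\sigma(x)$ is $E$ or a low-defect subexpression of $E'$ containing $x$, and the inductive hypothesis again supplies the least element. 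This completes the induction.

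The one genuinely delicate point is the subexpression-description step, together with the fact that a low-defect expression may admit more than one parse (for instance an iterated product can be split in several ways, and constants can be refactored). This is harmless: one fixes a parse before inducting, and the argument shows that the ``smallest low-defect subexpression containing $x$'' always lies strictly inside the minimal factor that involves $x$, so it does not depend on the chosen parse. Indeed the induction yields slightly more than stated --- the elements of $\sigma(x)$ form a chain under the subexpression relation --- so once nonemptiness and finiteness are noted, the existence of a minimum is automatic.
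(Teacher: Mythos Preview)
Your proof is correct, but it takes a longer route than the paper's. The paper does not run a full structural induction on $E$ for the second assertion. Instead it makes a single direct observation: since rule~(3) is the only rule that introduces a variable, any variable $x$ of $E$ was introduced by some application of rule~(3), so there is a subexpression of $E$ of the form $E'\cdot x+c$. The only subexpressions of $E$ containing $x$ that are strictly smaller than this one are $x$ itself and $E'\cdot x$, neither of which is a low-defect expression; hence $E'\cdot x+c$ is the desired minimum. This gives an explicit description of the smallest low-defect subexpression containing $x$, which the paper immediately uses to \emph{define} the nesting order.

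Your approach, by contrast, descends through the outermost constructor of $E$ and appeals to the inductive hypothesis in each case, having first catalogued the low-defect subexpressions of $E$ in terms of those of its constituents. This is correct and has the merit of making the chain structure of $\sigma(x)$ explicit (which the paper's argument uses implicitly when it speaks of ``the next-smallest subexpression containing $x$''). On the other hand, your final paragraph about parse ambiguity is unnecessary: an expression is a fixed syntactic tree, so its subexpressions --- and which of them happen to be low-defect expressions --- are determined by $E$ alone, independent of any derivation; the derivation is only a scaffold for the induction.
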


\begin{proof}
By definition, a variable of $E$ appears in $E$.  A variable of $E$ cannot
appear twice in $E$, as no rule of constructing low-defect expressions allows
this; rule (2) only allows multiplying two low-defect expressions if their
variables are disjoint, and rule (3) can only introduce a new variable different
from the ones already in $E$.

For the second part, observe that rule (3) is the only rule that introduces new
variables; so say $x$ is some variable of $E$, it must have been introduced via
rule (3).  This means that it occurs in a subexpression of $E$ of the form
$E'\cdot x+c$, where $E'$ is a low-defect expression and $c$ is a positive
integer constant.  Since $x$ itself is not a low-defect expression, and neither
is $E'\cdot x$, the next-smallest subexpression containing $x$, i.e., $E'\cdot
x+c$, is the smallest low-defect subexpression of $E$ that contains $x$.
\end{proof}

Because of this, it makes sense to define:

\begin{defn}
\label{defnest}
Let $E$ be a low-defect expression.  Let $x$ and $y$ be variables appearing in
$E$.  We say that $x\preceq y$ under the nesting ordering for $E$ if $x$ appears
in the smallest low-defect subexpression of $E$ that contains $y$.
\end{defn}

This is, in fact, a partial order:

\begin{prop}
The nesting ordering for a low-defect expression $E$ is a partial order.
\end{prop}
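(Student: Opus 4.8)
The plan is to verify the three defining properties of a partial order—reflexivity, antisymmetry, and transitivity—directly from Definition~\ref{defnest}, leaning on the structural fact just established that each variable $x$ has a well-defined smallest low-defect subexpression containing it; call this $E_x$. Reflexivity is immediate: $x$ appears in $E_x$, so $x\preceq x$. The crux of the argument is that for any two variables $x,y$, the subexpressions $E_x$ and $E_y$ must be \emph{nested} (one contains the other) whenever they overlap at all, since the subexpressions arising in the construction of a low-defect expression form a laminar family—any two low-defect subexpressions are either disjoint or one is contained in the other. This follows by structural induction on the rules (1)--(3): rule (2) forms a product of subexpressions on disjoint variable sets, so their subexpressions stay disjoint, while rule (3) adds a single outermost layer $E'\cdot x + c$ around $E'$, which contains every subexpression of $E'$.

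With the laminar property in hand, the remaining two axioms follow cleanly. For antisymmetry, suppose $x\preceq y$ and $y\preceq x$; then $x$ appears in $E_y$ and $y$ appears in $E_x$. By minimality, $x$ appearing in $E_y$ forces $E_x\subseteq E_y$ (the smallest subexpression containing $x$ is inside any subexpression containing $x$), and symmetrically $E_y\subseteq E_x$, so $E_x = E_y$; but a low-defect subexpression of the form $E'\cdot x+c$ introduces exactly one new variable at its outermost level, so the variable ``responsible'' for a given minimal subexpression is unique, giving $x=y$. For transitivity, if $x\preceq y$ and $y\preceq z$, then $x$ appears in $E_y$ and $y$ appears in $E_z$; the latter gives $E_y\subseteq E_z$ by minimality, hence $x$ appears in $E_z$, i.e.\ $x\preceq z$.

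I expect the main obstacle to be stating and justifying the laminar/nesting property of low-defect subexpressions cleanly—specifically, pinning down the claim ``$E_x\subseteq E_y$ whenever $x$ occurs in $E_y$,'' which is really the assertion that the smallest subexpression containing $x$ is contained in \emph{every} subexpression containing $x$. This in turn rests on the observation that the set of low-defect subexpressions containing a fixed occurrence of $x$ is totally ordered by inclusion, which one proves by structural induction mirroring the proof of the preceding proposition. Once that lemma is isolated, antisymmetry and transitivity are each a two-line deduction, and reflexivity is trivial; so the bulk of the writeup is really the induction establishing the nesting structure, after which the three axioms drop out formally.
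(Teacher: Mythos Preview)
Your proposal is correct and follows essentially the same route as the paper's proof: reflexivity is trivial, transitivity uses that $y\in E_z$ forces $E_y\subseteq E_z$, and antisymmetry uses that $E_x=E_y$ forces $x=y$ because the minimal subexpression has the form $E'\cdot x+c$. The only difference is that you isolate and justify the nesting/laminar property of low-defect subexpressions explicitly, whereas the paper treats the implication ``contains $y$, hence contains the smallest low-defect subexpression containing $y$'' as self-evident from the tree structure of expressions.
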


\begin{proof}
We have $x\preceq x$ as $x$ appears in any expression containing $x$.  If
$x\preceq y$ and $y\preceq z$, then the smallest low-defect expression
containing $z$ also contains $y$, and hence contains the smallest low-defect
expression containing $y$, and hence contains $x$.  And if $x\preceq y$ and
$y\preceq x$, then the smallest low-defect expression containing each is
contained in the other, i.e., the smallest low-defect expression containing $x$
is the smallest low-defect expression containing $y$.  Since the former has the
form $E_1 \cdot x + c_1$, and the latter has the form $E_1\cdot y+c_2$, we must
have $x=y$.
\end{proof}

In fact, it's not just any partial order -- it's a partial order that we've
already sort of seen; it's the partial order coming from the bijection between
variables of a low-defect expression $E$ and non-root vertices of its tree $T$.

\begin{prop}
\label{treeorder}
Let $E$ be a low-defect expression, and let $T$ be the corresponding tree.  Then
$x\preceq y$ under the nesting ordering if and only if the vertex in $T$
corresponding to $x$ is a descendant of the vertex in $T$ corresponding to $y$.
\end{prop}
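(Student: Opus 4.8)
The plan is to prove Proposition~\ref{treeorder} by structural induction on the low-defect expression $E$, matching the three rules in the definition of a low-defect expression against the corresponding three cases in the construction of the tree $T$ and the construction of the variable-to-vertex bijection. In each case the nesting relation $x \preceq y$ is defined via ``smallest low-defect subexpression containing $y$,'' so I need to track how subexpressions, and hence the nesting order, behave under each constructor, and compare this against the ancestor/descendant relation in the tree.

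First I would dispose of the base case: if $E$ is a constant, it has no variables and there is nothing to check. For the inductive step there are two cases. Case $E = E' \cdot x + c$: the tree $T$ is $T'$ (the tree of $E'$) with a new root adjoined above the old root, the old root now corresponding to the variable $x$. For two variables $y, z$ of $E'$, the smallest low-defect subexpression of $E$ containing either of them lies entirely within $E'$ (since $x$ and $c$ are only introduced at the top level), so $y \preceq z$ in $E$ iff $y \preceq z$ in $E'$, which by induction matches the descendant relation in $T'$, hence in $T$ since $T'$ sits inside $T$ unchanged. It remains to handle the new variable $x$: the smallest low-defect subexpression of $E$ containing $x$ is all of $E$, which contains every variable, so $y \preceq x$ for all variables $y$ of $E$; and on the tree side, $x$ corresponds to the old root of $T'$, which is indeed an ancestor of every non-root vertex of $T'$ — i.e., every other non-root vertex of $T$ is a descendant of the vertex for $x$. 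One also needs the degenerate sub-check that $x \preceq y$ fails for $y \neq x$, which holds because the smallest subexpression containing $y \in E'$ lies in $E'$ and so cannot contain $x$; correspondingly the vertex for $x$ is not a descendant of any vertex in $T'$. So the two sides agree.

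Case $E = E_1 \cdot E_2$: the tree $T$ is formed by merging the roots of $T_1$ and $T_2$, and the bijection sends variables of $E_i$ to their old vertices in $T_i \subseteq T$. A variable $x$ of $E_1$ and a variable $y$ of $E_2$ are never comparable in the nesting order (any low-defect subexpression is built inside either the $E_1$ side or the $E_2$ side but not straddling, since products require disjoint variable sets), and correspondingly the vertex for $x$ and the vertex for $y$ lie in $T_1$ and $T_2$ respectively, which after merging the roots share no ancestor-descendant relation (the only common point is the merged root, which corresponds to no variable). For two variables both in $E_1$ (resp.\ both in $E_2$), the smallest subexpression containing $y$ is unaffected by the presence of $E_2$, so nesting in $E$ restricts to nesting in $E_1$, which by induction is the descendant relation in $T_1$, which is preserved in $T$ (merging roots does not change descendant relations among non-root vertices). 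This finishes the induction.

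The main subtlety — not really an obstacle, but the point requiring care — is the bookkeeping around the root and the merge operation: one must check that merging the two roots of $T_1$ and $T_2$ into a single new root neither creates nor destroys any ancestor-descendant relation among the non-root vertices, and that the newly adjoined root in the $E'\cdot x + c$ case correctly becomes the top of the nesting order on the old variables while the old root becomes the vertex for the genuinely new variable $x$. Since the bijection was defined recursively by exactly the same case split, these checks amount to unwinding the definitions in parallel, and the induction closes cleanly. I would also remark that Proposition~\ref{tediousness} guarantees the nesting order and the tree are well-defined on equivalence classes, so it suffices to verify the statement for any one expression yielding $T$.
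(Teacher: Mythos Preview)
Your proposal is correct and follows essentially the same approach as the paper: structural induction on $E$, splitting into the constant, $E'\cdot x + c$, and $E_1\cdot E_2$ cases, and in each case verifying that the nesting relation restricts or extends exactly as the descendant relation does under the corresponding tree constructor. Your treatment is slightly more explicit in checking both directions in the $E'\cdot x+c$ case (that $x\preceq y$ fails for $y\ne x$), but the argument is otherwise the same as the paper's.
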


\begin{proof}
We prove this by structural induction on $E$.  If $E$ is an integer constant,
then there are no variables and the statement is trivial.

In the case where $E=E'\cdot x+c$, say $T'$ is the tree corresponding to $E'$.
Suppose $x_1$ and $x_2$ are variables of $E$.  If $x_1$ and $x_2$ are both
variables of $E'$, then by the inductive hypothesis, $x_1\preceq x_2$ in the
nesting ordering in $E'$ if and only if the vertex corresponding to $x_1$ in
$T'$ is a descendant of that corresponding to $x_2$.  However, it is clear that
$x_1\preceq x_2$ in the nesting ordering of $E'$ if and only if $x_1\preceq x_2$
in the nesting ordering of $E$, since the smallest low-defect subexpression of
$E'$ containing $x_2$ is necessarily also the smallest low-defect subexpression
of $E$ containing $x_2$; and similarly with the corresponding vertices.
Hence the proposition is proved in this case.  Otherwise, we must have that one
of the variables is $x$ itself; say the variables are $x$ and $x'$.  But in that
case we automatically have that $x'\preceq x$, and the vertex for $x'$ is a
descendant of that of $x$.

This leaves the case where $E=E_1 \cdot E_2$; say $T_1$ and $T_2$ are the trees
corresponding to $E_1$ and $E_2$.  If $x_1$ and $x_2$ are both
variables of $E_1$, then by the inductive hypothesis, $x_1\preceq x_2$ in the
nesting ordering in $E_1$ if and only if the vertex corresponding to $x_1$ in
$T_1$ is a descendant of that corresponding to $x_2$; but as above, it does not
matter if we consider this in $E_1$ and $T_1$ or $E$ and $T$.  Similarly the
statement holds if $x_1$ and $x_2$ are both variables of $E_2$.  Finally, if
$x_1$ is a variable of $E_1$ and $x_2$ is a variable of $E_2$, then $x_1$ and
$x_2$ are incomparable in the nesting ordering, as the smallest low-defect
subexpression containing $x_1$ is contained in $E_1$ and hence does not contain
$x_2$, and vice versa; and, correspondingly, the corresponding vertices are
incomparable in $T$.
\end{proof}

Now, we've already seen (Corollary~\ref{ttof}) that it is possible to determine
the low-defect polynomial $f$ for a low-defect expression $E$ from its tree $T$.
In fact, not only is it possible to do so, but we can write down an explicit
description of the terms of $f$ in terms of $T$.  Specifically:

\begin{prop}
\label{ttofformula}
Let $T$ be a low-defect tree (say with root $v_0$) and $f$ the corresponding
low-defect polynomials after assigning variables to the non-root vertices of
$T$; let $x_v$ denote the variable corresponding to the vertex $v$.  Then
for a subset $S$ of $V(T)\setminus\{v_0\}$, the monomial $\prod_{v\in S}
x_v$ appears in $f$ in and only if the subgraph induced by $S\cup{v_0}$ is a
subtree of $T$.  Furthermore, its coefficient is given by
\[ \left( \prod_{v\in S\cup\{v_0\}} w(v) \right)
	\left( \prod_{\substack{\textrm{$e$ has exactly one} \\
	\textrm{vertex in $S\cup\{v_0\}$}}} w(e) \right). \]
The constant term corresponds to the subtree $\{v_0\}$, and the leading term is
the term corresponding to all of $T$.
\end{prop}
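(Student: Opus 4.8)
The plan is to prove this by structural induction on the low-defect expression $E$ that yields $T$ (equivalently, by induction on the number of vertices of $T$, using Proposition~\ref{treeexist} to obtain an expression), following exactly the three-case structure of Definition~\ref{polydef}. Throughout I would carry along the explicit formula for the coefficient, so that the inductive step verifies the monomial-support claim, the coefficient formula, and the identifications of the constant and leading terms simultaneously.

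For the base case, $E$ is a constant $n$, so $T$ is a single root $v_0$ with weight $n$ and $f=n$. The only admissible $S$ is $\emptyset$, the induced subgraph on $\{v_0\}$ is trivially a subtree, the product over vertices is $n$, the product over boundary edges is empty, and indeed both the constant term and the leading term are $n$. For the inductive step I would handle the two constructors. In the case $E=E'\otimes x+c$ (rule (3)), the tree $T$ is $T'$ with a new root $v_0$ of weight $1$ attached by an edge of weight $c$ to the old root $v_0'$, and $x$ is the variable for $v_0'$. A subset $S\subseteq V(T)\setminus\{v_0\}$ induces (together with $v_0$) a subtree of $T$ precisely when either $S=\emptyset$, or $v_0'\in S$ and $S\setminus\{v_0'\}$ together with $v_0'$ induces a subtree of $T'$ (since $v_0$ is a leaf whose only neighbor is $v_0'$). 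Correspondingly $f=x\cdot f'+c$: the term $c$ comes from $S=\emptyset$, and every monomial of $f'$ corresponding to a subtree $S'$ of $T'$ gets multiplied by $x=x_{v_0'}$, giving the subtree $S'\cup\{v_0'\}$ of $T$. I would check the coefficient bookkeeping: passing from $T'$ to $T$ multiplies the vertex-weight product by $w(v_0)=1$ (no change), and for the boundary-edge product, the new edge $(v_0,v_0')$ has exactly one endpoint in $S\cup\{v_0\}$ iff $v_0'\notin S$, which is exactly the $S=\emptyset$ case where its weight $c$ must appear — matching the constant term $c$ — while for nonempty $S$ the new edge is internal and contributes nothing, so the coefficient of $x_{v_0'}\prod_{v\in S'}x_v$ equals that of $\prod_{v\in S'}x_v$ in $f'$, as required.

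In the case $E=E_1\otimes E_2$ (rule (2)), the tree $T$ is formed by merging the roots $v_0^{(1)},v_0^{(2)}$ of $T_1,T_2$ into a new root $v_0$ of weight $w(v_0^{(1)})w(v_0^{(2)})$, keeping all other vertices, edges, and their weights. A subset $S\subseteq V(T)\setminus\{v_0\}$ splits uniquely as $S_1\sqcup S_2$ with $S_i\subseteq V(T_i)\setminus\{v_0^{(i)}\}$, and $S\cup\{v_0\}$ induces a subtree of $T$ iff each $S_i\cup\{v_0^{(i)}\}$ induces a subtree of $T_i$ (connectivity through the merged root is exactly connectivity within each side). Since $f=f_1 f_2$ and the variable sets are disjoint, the monomials of $f$ are products of a monomial of $f_1$ and one of $f_2$, matching this decomposition. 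For the coefficient: the vertex-weight product factors as $w(v_0)\prod_{v\in S_1}w(v)\prod_{v\in S_2}w(v)=\big(w(v_0^{(1)})\prod_{v\in S_1\cup\{v_0^{(1)}\}}w(v)\big)\big(w(v_0^{(2)})\prod_{v\in S_2\cup\{v_0^{(2)}\}}w(v)\big)/\big(w(v_0^{(1)})w(v_0^{(2)})\big)$ — wait, more cleanly: $\prod_{v\in S\cup\{v_0\}}w(v)$ equals $\prod_{v\in S_1\cup\{v_0^{(1)}\}}w(v)\cdot\prod_{v\in S_2\cup\{v_0^{(2)}\}}w(v)$ because the weight of $v_0$ is precisely the product of the two old root weights. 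Each edge of $T$ lies entirely in $T_1$ or entirely in $T_2$, and it has exactly one endpoint in $S\cup\{v_0\}$ iff it has exactly one endpoint in $S_i\cup\{v_0^{(i)}\}$ on its side; so the boundary-edge product also factors, and the two pieces multiply to give the claimed coefficient. Finally $S=\emptyset$ splits as $\emptyset\sqcup\emptyset$, giving the constant term as the product of the two constant terms, and $S=V(T)\setminus\{v_0\}$ splits as the two full vertex sets, giving the leading term as the product of the two leading terms; both are nonzero by Proposition~\ref{polystruct}.

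The main obstacle I anticipate is purely organizational rather than mathematical: keeping the edge-boundary bookkeeping straight across the merge in rule (2), since an edge incident to one of the old roots becomes an edge incident to $v_0$, and one must confirm that "exactly one endpoint in $S\cup\{v_0\}$" is preserved under this identification — this works because $v_0\in S\cup\{v_0\}$ and $v_0^{(i)}\in S_i\cup\{v_0^{(i)}\}$, so the status of every such edge is unchanged. A secondary point worth stating carefully is well-definedness: by Corollary~\ref{ttof}, $f$ depends on $T$ only up to renaming variables, and the formula's right-hand side depends only on $T$, so it suffices to verify it for one expression yielding $T$, e.g. the reduced one; the induction above does exactly that.
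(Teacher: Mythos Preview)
Your proposal is correct and follows essentially the same route as the paper: structural induction on a low-defect expression yielding $T$, with the same three cases (constant, $E'\cdot x+c$, $E_1\cdot E_2$) and the same boundary-edge/vertex-weight bookkeeping. Your explicit remark on well-definedness via Corollary~\ref{ttof} is a nice addition the paper leaves implicit.
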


\begin{proof}
Let $E$ be a low-defect expression giving rise to $T$; we use structual
induction on $E$.  If $E$ is an integer constant $n$, then $T$ consists of just
a root labeled with $n$.  So the only rooted subtree of $T$ is $T$ itself,
containing no non-root vertices; and, correspondingly, $f$ has a unique term,
containing no variables, and with coefficient $n$, which matches the formula
given.

If $E=E'\cdot x+c$, say $T'$ and $f'$ are the tree and the polynomial arising
from $E'$.  Let $v_x$ be the vertex of $T$ corresponding to $x$, which is also
the root of $T'$.  Then a rooted subtree of $T$ consists of either just $v_0$,
or $v_0$ together with a rooted subtree of $T'$.  Correspondingly, since
$f=xf'+c$, a term of $f$ is either $x$ times a term of $f$, or just $c$.  The
subtree $\{v_0\}$ contains no non-root vertices and so corresponds to $c$; since
the root is labeled with a $1$ and the sole edge out of it is labeled with a
$c$, the formula for the coefficient is correct.  Any other rooted subtree $X$
consists of $v_0$ together with a rooted subtree $X'$ or $T'$; $X'$ corresponds
to some term $m'$ of $f'$.  Then we have a term $xm'$ in $f$, which corresponds
to $X$, since the old root of $T'$ is also the vertex $v_x$.  Furthermore, the
coefficient matches that given by the formula, changing $X'$ to $X$ just means
adding in the vertex $v_0$ and the edge $\{v_0,v_x\}$; however, $v_0$ has a
label of $1$, not changing the product, and the label of the edge $\{v_0,v_x\}$
is irrelevant as both vertices are in $X$.  (Moreover, no edges drop out of the
product, as the only new vertex is $v_0$, and its only edge is $\{v_0,v_x\}$.)
And since every term of $f$ is either $c$ or of the form $xm'$ for some term
$m'$ of $f'$, every term arises in this way.

This leaves the case where $E=E_1\cdot E_2$; say each $E_i$ gives rise to a
trees $T_i$ and a polynomial $f_i$, and let $v_i$ denote the root of $T_i$.
Then a rooted subtree of $T$ consists of $\{v_0\}$ together with subsets
$X_1\subseteq T_1$ and $X_2\subseteq T_2$ such $X_i\cup \{v_i\}$ is a rooted
subtree of $T_i$. Correspondingly, $f=f_1 f_2$, so each term of $f$ is the
product of a term of $f_1$ and a term of $f_2$; since $f_1$ and $f_2$ have no
variables in common, terms $m_1m_2$ are determined uniquely by the pair
$(m_1,m_2)$, which by the inductive hypothesis are in bijection with sets
$(X_1,X_2)$ as described above.  It remains to check that the coefficients
match.  Say $X_1$ and $X_2$ are subsets as described above, with each $X_i$
corresponding to a term $m_i$ of $f_i$, so that the subtree $X_1\cup
X_2\cup\{v_0\}$ corresponds to the term $m_1m_2$.  Then the product of the
labels of vertices in $X_1 \cup X_2 \cup \{v_0\}$ is the product of the labels
of vertices in $X_1\cup X_2$ times $w(v_0)$, the latter of which is equal to
$w(v_1)w(v_2)$, so this is the same as the product of the labels of vertices in
$X_1\cup \{v_1\}$ times the product of the labels of vertices in
$X_2\cup\{v_2\}$.  Meanwhile, the product over the edges is also the product of
both the previous ones, as the only edges that could change are those that
connected $X_1$ to $v_1$ or $X_2$ to $v_2$, all of which were previously not in
the product due to having both vertices in one of the $X_i\cup\{v_i\}$; but
these now connect $X_1$ and $X_2$ to $v_0$, with both vertices in $X_1\cup
X_2\cup\{v_0\}$, so they still are not in the product.

Finally, the leading term corresponds to all of $T$ as it contains all the
variables, and the constant term corresponds to $\{v_0\}$ as it contains none of
the variables.
\end{proof}

This yields the following corollary, which will be useful in
Section~\ref{sectrunc}:

\begin{cor}
\label{excludedminl}
Let $E$ be a low-defect expression and $f$ the corresponding low-defect
polynomial.  Any term of $f$ other than the leading term must exclude at least
one minimal variable.
\end{cor}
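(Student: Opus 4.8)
The plan is to translate the statement into the language of trees via Proposition~\ref{ttofformula} and Proposition~\ref{treeorder}, and then prove a purely combinatorial fact about rooted subtrees. Let $T$ be the low-defect tree associated to $E$, with root $v_0$. By Proposition~\ref{ttofformula}, the terms of $f$ are in bijection with the rooted subtrees of $T$, i.e.\ the subsets $S\subseteq V(T)\setminus\{v_0\}$ for which the subgraph induced by $S\cup\{v_0\}$ is a subtree of $T$; under this bijection the leading term corresponds to $S\cup\{v_0\}=V(T)$, and the term $\prod_{v\in S}x_v$ excludes the variable $x_w$ exactly when $w\notin S$. By Proposition~\ref{treeorder}, $x\preceq y$ in the nesting order precisely when the vertex of $x$ is a descendant of the vertex of $y$; since this relation is reflexive, $x$ is a minimal variable exactly when the vertex of $x$ has no proper descendants, i.e.\ is a leaf of $T$. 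So the corollary is equivalent to the claim that if $S\cup\{v_0\}$ is a rooted subtree of $T$ with $S\cup\{v_0\}\ne V(T)$, then $V(T)\setminus(S\cup\{v_0\})$ contains a leaf of $T$.

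To prove this combinatorial claim, I would pick any vertex $v\notin S\cup\{v_0\}$ and argue that no descendant $w$ of $v$ can lie in $S\cup\{v_0\}$: the unique path in $T$ from $v_0$ to $w$ passes through $v$, so if $w\in S\cup\{v_0\}$, then—since $S\cup\{v_0\}$ is connected and contains both $v_0$ and $w$—it would contain that entire path and hence contain $v$, a contradiction. Therefore the set of all descendants of $v$ is disjoint from $S\cup\{v_0\}$. This set contains at least one leaf of $T$ (namely $v$ itself if $v$ is a leaf, or else any leaf obtained by repeatedly descending to a child), and that leaf is a minimal variable excluded by the term $\prod_{u\in S}x_u$.

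The remaining loose ends are bookkeeping rather than substance: the degenerate case in which $T$ is a single vertex makes the statement vacuous, since then $f$ is a constant equal to its own leading term; and by Proposition~\ref{polystruct} the leading coefficient is nonzero, so the phrase ``other than the leading term'' is meaningful. I do not anticipate any real obstacle here—the only point requiring a moment's care is the identification of the minimal variables with the leaves of $T$, which is immediate from the reflexive descendant characterization in Proposition~\ref{treeorder}.
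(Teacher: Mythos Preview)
Your proposal is correct and follows essentially the same approach as the paper: translate terms to rooted subtrees via Proposition~\ref{ttofformula}, identify minimal variables with leaves via Proposition~\ref{treeorder}, and observe that any proper rooted subtree omits a leaf. The paper's proof is a single sentence asserting this last fact without justification, whereas you supply the connectivity argument explicitly; but the route is the same.
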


\begin{proof}
Consider the low-defect tree corresponding to $E$.  Any subtree other than the
whole tree must exclude at least one leaf, i.e., the corresponding term of $f$
must exclude at least one minimal variable.
\end{proof}

It also, in particular, tells us the leading coefficient of $f$ in terms of the
$T$, which we will use in Section~\ref{bdsec}:

\begin{cor}
\label{leadcoefftree}
Let $T$ be a low-defect tree, and $f$ be the corresponding low-defect
polynomial.  Then the leading coefficient of $f$ is the product of the vertex
labels of $T$.
\end{cor}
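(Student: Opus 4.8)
The plan is to deduce this directly from Proposition~\ref{ttofformula}, which already gives an explicit formula for the coefficient of every monomial of $f$ in terms of vertex labels $w(v)$ and edge labels $w(e)$ of $T$, and which identifies the leading term as the one corresponding to all of $T$. So first I would invoke that proposition and specialize it to the set $S = V(T)\setminus\{v_0\}$, where $v_0$ is the root; then $S\cup\{v_0\} = V(T)$, the induced subgraph on $S\cup\{v_0\}$ is all of $T$ (certainly a subtree), and the monomial $\prod_{v\in S} x_v$ is exactly the leading term, by the last sentence of Proposition~\ref{ttofformula}.

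Next I would examine the two products in the coefficient formula. The first product, $\prod_{v\in S\cup\{v_0\}} w(v)$, is simply $\prod_{v\in V(T)} w(v)$, the product of all vertex labels. The only point requiring a word of justification is the second product, over edges $e$ having exactly one endpoint in $S\cup\{v_0\}$: since $S\cup\{v_0\} = V(T)$ contains \emph{every} vertex of $T$, every edge has \emph{both} endpoints in it, so no edge qualifies and this product is empty, hence equal to $1$. Combining, the leading coefficient of $f$ equals $\prod_{v\in V(T)} w(v)$, as claimed.

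There is essentially no obstacle here — the content is entirely carried by Proposition~\ref{ttofformula}, and the corollary is the degenerate case of that formula in which the chosen subtree is the whole tree. The only thing to be careful about is making the observation that the edge factor drops out, which is immediate once one notes that $S$ together with the root exhausts $V(T)$. Accordingly, I would keep the proof to two or three sentences rather than reproving anything.
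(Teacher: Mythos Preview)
Your proposal is correct and follows essentially the same approach as the paper's own proof: both invoke Proposition~\ref{ttofformula} for the subtree equal to all of $T$, observe that the vertex product runs over every vertex, and note that the edge product is empty because every edge has both endpoints in $V(T)$. The paper's version is simply a terser rendering of exactly the argument you wrote.
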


\begin{proof}
The leading term corresponds to the subtree consisting of all of $T$.  This
includes all the vertices; and no edge has exactly one vertex in it, as all
edges have both vertices in it.
\end{proof}

Now, as we've already noted above, we cannot go backwards from $f$ to determine
$T$; the map from trees to polynomials is not one-to-one.  However, we can go
part of the way back -- we can determine the ``shape'' of $T$, that is to say,
the isomorphism class of the rooted but unlabeled tree underlying $T$; it is
only the labels we cannot determine with certainty.

To do this, for a low-defect polynomial $f$, consider the set of monomials that
appear in $f$, without their associated coefficients; ignoring the nesting
ordering for a moment, these monomials can be partially ordered by divisibility.
But we can, in fact, recover the nesting ordering (and thus the shape of $T$,
without labels) from this partial ordering.  First, a definition:

\begin{defn}
Let $E$ be a low-defect expression yielding a low-defect tree $T$ and a
low-defect polynomial $f$; let $x$ be a variable in $E$ and $v_x$ the
corresponding vertex in $T$.  We define the \emph{key} of $x$ in $E$ to be the
term of $f$ corresponding to the subtree consisting of all ancestors of $v_x$.
We define the \emph{anti-key} of $x$ in $E$ to be the term of $f$ corresponding
to the subtree consisting of all non-descendants of $v_x$.  So the key of $x$ is
the smallest term of $f$ containing $x$ (under divisibility ignoring
coefficients), and the anti-key of $x$ is the largest term not containing $x$.
\end{defn}

Both these operations, key and anti-key, are order-reversing:

\begin{prop}
\label{recoverorder}
Let $E$ be a low-defect expression, and let $x$ and $y$ be variables appearing
in $E$.  Then $x\preceq y$ under the nesting ordering if and only if the key of
$y$ divides the key of $x$ (ignoring coefficients), which also occurs if and
only if the anti-key of $y$ divides the anti-key of $x$.
\end{prop}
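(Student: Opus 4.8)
The plan is to reduce the statement to a purely combinatorial fact about the tree $T$ attached to $E$, using Proposition~\ref{ttofformula} to translate ``key'' and ``anti-key'' into vertex sets and Proposition~\ref{treeorder} to translate the nesting ordering into the descendant relation. Fix a low-defect expression $E$ with tree $T$, root $v_0$, and polynomial $f$; for a non-root vertex $v$ let $A(v)$ be the set of non-root vertices on the root-to-$v$ path (so $v\in A(v)$) and $D(v)$ the set of descendants of $v$ (again $v\in D(v)$). The set $A(v)\cup\{v_0\}$ is a rooted subtree of $T$ (it is a path), and $V(T)\setminus D(v)$ is a rooted subtree of $T$ (it is $T$ with the subtree rooted at $v$ pruned, and $v_0$ survives since $v\neq v_0$). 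Hence by Proposition~\ref{ttofformula} the key of the variable $x$ sitting at $v_x$ is the term whose monomial is $\prod_{w\in A(v_x)}x_w$, and its anti-key is the term whose monomial is $\prod_{w\in(V(T)\setminus\{v_0\})\setminus D(v_x)}x_w$. Because every monomial appearing in $f$ is squarefree, for vertex sets $S,S'\subseteq V(T)\setminus\{v_0\}$ the monomial $\prod_{w\in S}x_w$ divides $\prod_{w\in S'}x_w$ precisely when $S\subseteq S'$.

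With this dictionary in hand, I would verify two elementary set inclusions. First, $A(v_y)\subseteq A(v_x)$ holds if and only if $v_x\in D(v_y)$: if $v_x$ is a descendant of $v_y$ then the root-to-$v_y$ path is an initial segment of the root-to-$v_x$ path; conversely $v_y\in A(v_y)\subseteq A(v_x)$ forces $v_y$ to be an ancestor of $v_x$. Thus the key of $y$ divides the key of $x$ if and only if $v_x\in D(v_y)$. Second, taking complements inside $V(T)\setminus\{v_0\}$ reverses inclusions, so the anti-key monomial of $y$ divides that of $x$ if and only if $D(v_x)\subseteq D(v_y)$; and $D(v_x)\subseteq D(v_y)$ holds if and only if $v_x\in D(v_y)$ (forward since $v_x\in D(v_x)$, backward since every descendant of $v_x$ is a descendant of $v_y$). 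So the anti-key divisibility condition is equivalent to the same statement $v_x\in D(v_y)$.

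Finally, Proposition~\ref{treeorder} says that $x\preceq y$ in the nesting ordering holds exactly when $v_x$ is a descendant of $v_y$, i.e.\ $v_x\in D(v_y)$, which closes the chain of equivalences. I do not anticipate a genuine obstacle; the only points requiring care are the two ``subtree'' checks needed to invoke Proposition~\ref{ttofformula} (that the ancestor path and the complement of a rooted subtree are themselves rooted subtrees, so that the key and anti-key really are terms of $f$), the squarefreeness remark that makes divisibility of monomials equivalent to containment of vertex sets, and keeping the direction of inclusion straight when complementing in the anti-key case so that its divisibility comes out oriented the same way as the key case.
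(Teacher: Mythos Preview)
Your proposal is correct and follows essentially the same approach as the paper: both translate the statement to the tree $T$ via Proposition~\ref{treeorder}, identify the key and anti-key monomials with the ancestor set and the non-descendant set respectively, and then verify the elementary equivalences $A(v_y)\subseteq A(v_x)\iff v_x\in D(v_y)\iff D(v_x)\subseteq D(v_y)$. Your write-up is slightly more explicit in checking that the relevant vertex sets really are rooted subtrees and in invoking squarefreeness to equate monomial divisibility with set containment, but the argument is the same.
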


\begin{proof}
Let $T$ be the low-defect tree determined by $E$, and let $v_x$ and $v_y$ be the
vertices corresponding to $x$ and $y$.  By Proposition~\ref{treeorder},
$x\preceq y$ if and only if, $v_x$ is a descendant of $v_y$.  But if $v_x$ is a
descendant of $v_y$, then every ancestor of $v_y$ is an ancestor of $v_x$, and
so (ignoring coefficients), the key of $y$ divides the key of $x$.  Conversely,
if the key of $y$ divides the key of $x$, then $y$ divides the key of $x$, and
so $v_y$ is an ancestor of $v_x$.  Similarly, if $v_x$ is a descendant of $v_y$,
then every non-descendant of $v_y$ is a non-descendant of $v_x$, and so the
anti-key of $y$ divides the anti-key of $x$ (ignoring coefficients).  Convesely,
if the anti-key of $y$ divides the anti-key of $x$, then every non-descendant of
$v_y$ is a non-descendant of $v_x$, i.e., every descendant of $v_x$ is a
descendant of $v_y$, i.e., $v_x$ is a descendant of $v_y$ and so $x\preceq y$.
\end{proof}

Thus, from $f$ alone, the nesting ordering on the variables can be recovered;
for as we saw above, it is possible from $f$ alone to determine the key and the
anti-key of some variable in $f$ (so we can speak simply of ``the key of $x$ in
$f$'', or ``the anti-key of $x$ in $f$'').  But by
Proposition~\ref{recoverorder}, if $x$ and $y$ are variables in $f$, and we know
their keys or anti-keys, we can determine whether or not $x\preceq y$, without
needing to know the tree or expression that $f$ came from; it does not depend on
those things.  Thus it makes sense to simply talk about the nesting ordering on
the variables of $f$.  Furthermore this means we can also recover the shape of
$T$ from $f$ alone; the vertex corresponding to $x$ is a child of the vertex
corresponding to $y$ if and only if $x\preceq y$ and there are no other
variables inbetween, and the vertex corresponding to $x$ is a child of the root
if and only if $x$ is maximal in the nesting ordering.

Indeed, we can, given $f$, determine all trees $T$ that yield it.  By above, we
know the shape, and which variables correspond to which vertices, and
Proposition~\ref{ttofformula} constrains the vertex and edge labels -- indeed,
it not only constrains them, it bounds them (as every label divides at least one
coefficient of $f$), making it possible to determine all $T$ that yield $f$ (and
thus to determine $\cpx{f}$ via brute-force search.  (One can also use this
procedure to determine if $f$ is a low-defect polynomial at all, if one does not
already know.)  But this is rather more involved than what is needed to compute
the complexity of a low-defect expression or tree!

\begin{rem}
\label{maxlrem}
It is the minimal variables of $f$ will turn out to be quite important in
Section~\ref{sectrunc}, but it's worth noting that the maximal variables have a
use too -- in \cite{paperwo}, the proposition was proved (Lemma~4.3) that if $f$
is a low-defect polynomial of degree at least $1$, there exists a variable $x$,
low-defect polynomials $g$ and $h$, and a positive integer $c$ such that
$f=h\cdot (g\cdot x+c)$.  With this framework -- if we allow for the use of
commutativity and associativity -- we can easily see that these $x$ are
precisely the maximal variables of $f$.
\end{rem}

\subsection{A lower bound on the complexity of a low-defect polynomial}
\label{bdsec}

In this section, we will discuss the notion of the complexity of a low-defect
expression, tree, or polynomial, and use this to prove a lower bound on the
complexity of a low-defect polynomial (Corollary~\ref{polycpxbd}).  This lower
bound is what allows us to show that truncation will keep the degrees of our
polynomials low despite our use of small step sizes (see discussion in
Section~\ref{buildsec}).

A low-defect expression has an associated base complexity:

\begin{defn}
We define the complexity of a low-defect expression $E$, denoted $\cpx{E}$, as
follows:
\begin{enumerate}
\item If $E$ is a positive integer constant $n$, we define $\cpx{E}=\cpx{n}$.
\item If $E$ is of the form $E_1 \cdot E_2$, where $E_1$ and $E_2$ are
low-defect expressions, we define $\cpx{E}=\cpx{E_1}+\cpx{E_2}$.
\item If $E$ is of the form $E' \cdot x + c$, where $E'$ is a low-defect
expression, $x$ is a variable, and $c$ is a positive integer constant, we define
$\cpx{E}=\cpx{E'}+\cpx{c}$.
\end{enumerate}
\end{defn}

In Section~\ref{review} we defined $\cpx{f}$, for a low-defect polynomial $f$,
to be the smallest $C$ such that $(f,C)$ is a low-defect pair.  Above, we
also defined the notion of $\cpx{E}$ for $E$ a low-defect expression.  These are
compatible as follows:

\begin{prop}
Let $f$ be a low-defect polynomial.  Then $\cpx{f}$ is the smallest value of
$\cpx{E}$ among low-defect expressions $E$ that evaluate to $f$.
\end{prop}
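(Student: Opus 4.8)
The plan is to prove both inequalities separately: that $\cpx{f} \le \min_E \cpx{E}$ (where the minimum ranges over low-defect expressions $E$ evaluating to $f$), and that $\min_E \cpx{E} \le \cpx{f}$. For the first direction, I would show by structural induction on $E$ that if $E$ evaluates to the low-defect polynomial $f$, then $(f, \cpx{E})$ is a low-defect pair. The base case $E = n$ gives $(n, \cpx{n}) \in \mathscr{P}$ by rule (1) of Definition~\ref{polydef}. For $E = E_1 \cdot E_2$, the inductive hypothesis gives $(f_1, \cpx{E_1}), (f_2, \cpx{E_2}) \in \mathscr{P}$, and rule (2) yields $(f_1 \otimes f_2, \cpx{E_1} + \cpx{E_2}) \in \mathscr{P}$; since $f = f_1 \otimes f_2$ (after the relabeling built into $\otimes$) and $\cpx{E} = \cpx{E_1} + \cpx{E_2}$, we are done. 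For $E = E' \cdot x + c$, the inductive hypothesis gives $(f', \cpx{E'}) \in \mathscr{P}$, and rule (3) yields $(f' \otimes x + c, \cpx{E'} + \cpx{c}) \in \mathscr{P}$, matching $\cpx{E}$. Hence $\cpx{f} \le \cpx{E}$ for every such $E$, so $\cpx{f} \le \min_E \cpx{E}$.

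For the reverse direction, the idea is to mirror the construction of low-defect pairs with a construction of low-defect expressions, arguing by induction on the derivation that builds $(f, \cpx{f})$ as a low-defect pair. Concretely, I would prove: for every low-defect pair $(g, C) \in \mathscr{P}$, there is a low-defect expression $E$ evaluating to $g$ with $\cpx{E} \le C$. This is again structural induction on the derivation of $(g,C)$ via rules (1)--(3). In case (1), $(k, C)$ with $C \ge \cpx{k}$: take $E = k$, so $\cpx{E} = \cpx{k} \le C$. In case (2), $(g_1 \otimes g_2, C_1 + C_2)$: by induction get expressions $E_1, E_2$ with $\cpx{E_i} \le C_i$ evaluating to $g_i$; after relabeling variables to be disjoint, $E_1 \cdot E_2$ evaluates to $g_1 \otimes g_2$ with $\cpx{E_1 \cdot E_2} = \cpx{E_1} + \cpx{E_2} \le C_1 + C_2$. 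In case (3), $(g' \otimes x + c, C' + D)$ with $D \ge \cpx{c}$: by induction get $E'$ with $\cpx{E'} \le C'$ evaluating to $g'$; then $E' \cdot x + c$ (after renaming $x$ to avoid collision with variables of $E'$) evaluates to $g' \otimes x + c$ with complexity $\cpx{E'} + \cpx{c} \le C' + D$. Applying this to $(f, \cpx{f})$ produces a low-defect expression $E$ evaluating to $f$ with $\cpx{E} \le \cpx{f}$, hence $\min_E \cpx{E} \le \cpx{f}$. Combining the two directions gives equality.

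The one subtlety to handle carefully—which I expect to be the main (if mild) obstacle—is the bookkeeping around variable relabeling in the $\otimes$ operation and in rule (3). The operation $f_1 \otimes f_2$ is defined concretely by shifting indices, but as the paper notes we regard low-defect polynomials up to renaming of variables; similarly the complexity $\cpx{E}$ of an expression does not depend on the variable names. So throughout both inductions I would work "up to renaming of variables," being explicit that when I write "$E$ evaluates to $f$" I mean up to such renaming, and that the $\otimes$ in Definition~\ref{polydef} and the products/rules for expressions agree under this identification. This is exactly the viewpoint already adopted in the excerpt (see the remarks following Definition~\ref{polydef} and Corollary~\ref{ttof}), so no new machinery is needed; it is just a matter of phrasing the inductions so the relabeling steps line up. No genuine difficulty arises beyond this, since the definition of $\cpx{E}$ was evidently chosen to make rules (1)--(3) for expressions parallel rules (1)--(3) for low-defect pairs term for term.
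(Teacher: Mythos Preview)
Your proof is correct and follows essentially the same approach as the paper's: both observe that the construction rules for low-defect pairs parallel those for low-defect expressions, with the only difference being that pairs allow the base complexity to be inflated, so every expression yields a pair with the same complexity and every pair comes from an expression of no greater complexity. The paper states this correspondence in a single paragraph without writing out the inductions, whereas you spell out each case explicitly and flag the variable-relabeling bookkeeping; but the underlying argument is the same.
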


\begin{proof}
The rules for building up a low-defect pair $(f,C)$ are exactly the same as the
rules for building a low-defect expression $E$, and what these rules do to the
base complexity $C$ is exactly the same as what they do to the complexity
$\cpx{E}$ (except that they allow for increasing $C$ further).  So each
low-defect pair $(f,C)$ comes from some low-defect expression $E$ yielding $f$
with $\cpx{E}\le C$, and any low-defect expression $E$ yielding $f$ yields a
low-defect pair $(f,\cpx{E})$.  So the lowest possible value of $C$ and of
$\cpx{E}$ are the same.
\end{proof}

Indeed, though we will not use this formalism here, it may make sense to
consider ``low-defect expression pairs'', pairs $(E,C)$ where $E$ is a
low-defect expression and $C\ge \cpx{E}$.  After all, the definition of
$\cpx{E}$ assumes one knows the complexities of the integer constants appearing
in $\cpx{E}$, but one may not know these exactly, but only have an upper bound
on them.  For instance, one might not be using low-defect expressions as we
defined them here, but rather ones where, instead of integer constants, one has
representations of integers in terms of $1$, $+$, and $\cdot$.  That is to say,
perhaps one is not using expressions such as $2(2x+1)$, but rather such as
$(1+1)((1+1)x+1)$.  In this example, the expressions used for the integer
constants were most-efficient, but this may not be the case in general.  In this
case, it would make sense to consider the complexity of the expression to be
simply the number of $1$'s used, which would be an upper bound on the complexity
of the low-defect expression it yields.  This sort of only having an upper bound
is, after all, the reason we consider pairs $(f,C)$, and it may make sense in
other contexts to do with expressions as we do here with polynomials.

Since we like to encode low-defect expressions as trees, it makes sense to
define the complexity of these:

\begin{defn}
The complexity of a low-defect tree, $\cpx{T}$, is defined to be the smallest
$\cpx{E}$ among all low-defect expressions yielding $T$.
\end{defn}

Note that it follows from this definition that for a low-defect polynomial $f$,
$\cpx{f}$ can be equivalently characterized as the smallest $\cpx{T}$ among all
trees $T$ yielding $f$.  Again, it may make sense in other contexts to consider
pairs $(T,C)$ with $C\ge \cpx{T}$, for the same reasons discussed above.  If,
however, we do know the complexity of arbitrary natural numbers, then the
complexities of expressions and of trees can be computed as follows:

\begin{prop}
\label{exprcpxbd}
We have:
\begin{enumerate}
\item Let $E$ be a low-defect expression.  Then $\cpx{E}$ is equal to the sum of
the complexities of all the integer constants occurring in $E$.
\item Let $T$ be a low-defect tree.  Then
\[ \cpx{T} = \sum_{e\ \textrm{an edge}} \cpx{w(e)} +
	\sum_{v\ \textrm{a leaf}} \cpx{w(v)}
	+ \sum_{\substack{v\ \textrm{a non-leaf vertex}\\ w(v)>1}} \cpx{w(v)},\]
where $w$ denotes the label of the given vertex or edge.
\end{enumerate}
\end{prop}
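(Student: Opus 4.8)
The plan is to handle the two parts in turn, with part (1) feeding into part (2).

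Part (1) is a routine structural induction on the low-defect expression $E$, following the three clauses defining $\cpx{E}$. If $E$ is a constant $n$, the claim is the definition. If $E=E_1\cdot E_2$, the integer constants occurring in $E$ are exactly those of $E_1$ together with those of $E_2$, while $\cpx{E}=\cpx{E_1}+\cpx{E_2}$, so the claim follows from the inductive hypothesis applied to $E_1$ and $E_2$. If $E=E'\cdot x+c$, the integer constants of $E$ are those of $E'$ together with $c$, while $\cpx{E}=\cpx{E'}+\cpx{c}$, so the inductive hypothesis finishes it.

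For part (2), write $\Phi(T)$ for the asserted right-hand side, where the root of a one-vertex tree is regarded as a leaf. I will show $\cpx{T}\le\Phi(T)$ and $\cpx{T}\ge\Phi(T)$ separately. For the upper bound, take the reduced low-defect expression $E$ for $T$ produced in the proof of Proposition~\ref{treeexist}, and trace through that recursive construction: the multiset of integer constants occurring in $E$ is exactly $\{w(e):e\text{ an edge of }T\}$ together with $\{w(v):v\text{ a leaf of }T\}$ together with $\{w(v):v\text{ a non-leaf vertex with }w(v)>1\}$, since each edge label is introduced once as a ``$+c$'', each leaf label appears once as a base-case constant, and the label of an internal vertex appears once as a leading multiplier, omitted precisely when it equals $1$. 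By part (1), $\cpx{E}$ is the sum of the complexities of these constants, i.e.\ $\Phi(T)$, so $\cpx{T}\le\cpx{E}=\Phi(T)$. (Alternatively one can build such an $E$ directly by induction on $T$.) For the lower bound I will show, by structural induction on a low-defect expression $E$ yielding a tree $T$, that $\cpx{E}\ge\Phi(T)$; minimizing over such $E$ then yields $\cpx{T}\ge\Phi(T)$. It is convenient to write $\Phi(T)=\sum_{e}\cpx{w(e)}+\sum_{v}\phi(v)$, where $\phi(v)=\cpx{w(v)}$ if $v$ is a leaf or an internal vertex with $w(v)>1$ and $\phi(v)=0$ if $v$ is an internal vertex with $w(v)=1$; note $0\le\phi(v)\le\cpx{w(v)}$ in all cases. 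If $E=n$, then $T$ is a single vertex labeled $n$, which is a leaf, and $\cpx{E}=\cpx{n}=\Phi(T)$. If $E=E'\cdot x+c$, then $T$ is the tree $T'$ of $E'$ with a fresh root (labeled $1$) joined to its old root by a fresh edge labeled $c$; the fresh root is internal with label $1$, hence contributes $\phi=0$, the fresh edge contributes $\cpx{c}$, and every other vertex and edge retains its contribution, so $\Phi(T)=\Phi(T')+\cpx{c}$, and the inductive hypothesis gives $\cpx{E}=\cpx{E'}+\cpx{c}\ge\Phi(T')+\cpx{c}=\Phi(T)$. Finally, if $E=E_1\cdot E_2$ with trees $T_1,T_2$ whose roots $v_1,v_2$ have labels $w_1,w_2$, then $T$ is obtained by merging $v_1,v_2$ into a single root $v_0$ of label $w_1w_2$, keeping all edges and all non-root vertices of $T_1,T_2$, so $\Phi(T)=\Phi(T_1)+\Phi(T_2)-\phi_{T_1}(v_1)-\phi_{T_2}(v_2)+\phi_{T}(v_0)$. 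The heart of the matter is the inequality $\phi_{T}(v_0)\le\phi_{T_1}(v_1)+\phi_{T_2}(v_2)$, verified by cases according to whether each $v_i$ has children (equivalently whether each $T_i$ is a single vertex) and whether each $w_i>1$: each case reduces either to the elementary bound $\cpx{w_1w_2}\le\cpx{w_1}+\cpx{w_2}$ (when $v_0$, and hence both $v_i$, are leaves, or when both $w_i>1$) or to the triviality $0\le\cpx{w_i}$ (when some $w_i=1$). Granting this, $\Phi(T)\le\Phi(T_1)+\Phi(T_2)\le\cpx{E_1}+\cpx{E_2}=\cpx{E}$, completing the induction.

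The main obstacle is precisely this case analysis in the product step: the edge cases — one factor tree being a single vertex, so that its root is a leaf rather than an internal vertex, or a vertex label being $1$ — are what force the slightly delicate definition of $\phi$, and one must track the leaf/non-leaf status of the merged root $v_0$ against those of $v_1$ and $v_2$ correctly. No individual case is difficult, but there are several. I note in passing that the lower bound could instead be deduced from Proposition~\ref{tediousness}: moves (1), (2), and (5) leave $\sum\cpx{c}$ unchanged, move (3) applied in the combining direction changes it by $\cpx{mn}-\cpx{m}-\cpx{n}\le 0$, and move (4) applied to delete a factor $1$ decreases it by $1$, so any $E$ yielding $T$ can be rewritten into the reduced expression for $T$ without increasing $\cpx{E}$; but making precise that one never needs to split a constant or insert a spurious factor of $1$ requires isolating a normal form, so the direct induction above is cleaner.
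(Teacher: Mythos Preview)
Your proof is correct. Part~(1) and the upper bound in part~(2) match the paper exactly: both use the reduced expression for $T$ and read off that its constants are precisely the edge labels, the leaf labels, and the non-unit internal-vertex labels.

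For the lower bound in part~(2) the paper takes a different route. Rather than inducting on the structure of an arbitrary $E$ yielding $T$, it invokes Proposition~\ref{tediousness}: any expression for $T$ is related to the reduced one by moves of types (1)--(5), and the paper argues that within each maximal product subexpression $P=E_1\cdots E_k$ (with no $E_i$ itself a product), moves (3) and (4) only shuffle the constant factors of $P$, so complexity is minimized by multiplying all constants in $P$ together and deleting stray $1$'s---which recovers the reduced expression up to moves (1), (2), (5). Your direct induction via the auxiliary function $\phi$ avoids this dependency on Proposition~\ref{tediousness} and is more self-contained; the price is the case analysis on $\phi_T(v_0)\le\phi_{T_1}(v_1)+\phi_{T_2}(v_2)$, which you have handled correctly (the only nontrivial ingredient being $\cpx{w_1w_2}\le\cpx{w_1}+\cpx{w_2}$). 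Amusingly, you mention the paper's approach as an alternative and dismiss it as requiring a normal-form argument; the paper in fact carries out exactly that argument, isolating the maximal products as its normal form.
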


\begin{proof}
The first statement is a straightforward structural induction.  If $E$ is a
constant, its complexity is the complexity of that constant.  If $E=E_1 \cdot
E_2$, its complexity is $\cpx{E_1}+\cpx{E_2}$, which by the inductive hypothesis
is the sum of the complexities of all the constants used in either.  And if
$E=E'\cdot x+c$, its complexity is $\cpx{E'}+\cpx{c}$, which by the inductive
hypothesis is the sum of the complexities of the constants used in $E'$ plus
that of the new constant introduced.

For the second statement, consider a reduced low-defect expression $E$ giving
rise to $T$.  Then the edge and vertex labels correspond exactly to the
constants used in $E$, with the exception of labels of $1$ on non-leaf vertices.
As $\cpx{T}\le\cpx{E}$, this shows that the formula above is an upper bound on
$\cpx{T}$.  For the lower bound, note that by Proposition~\ref{tediousness}, any
other low-defect expression for $T$ can be obtained by $E$ by the listed moves.
Moves of the form (1), (2), and (5) do not alter the complexity of an expression
at all.

This leaves moves of type (3) and (4).  Suppose (3) or (4) is going to be
applied to a subexpression $E'$; consider $E'$ as a product (possibly of one
thing) and consider the largest product $P$ containing the factors of $E'$ as
factors.  That is to say, let $P$ be the largest subexpression of the form $E_1
\cdot \ldots \cdot E_k$ (where due to (1) and (2), we do not need to worry about
parenthesization or order) where the $E_i$ cannot be written as products, and
the factors of $E'$ are among the $E_i$.  Since (3) and (4), applied to factors
of $P$, only alter things within $P$, and do not alter the internals of any
$E_i$ which can be written as a sum, we see that the least complexity is
obtained by minimizing the complexity of each individual product $P$.  But this
is clearly done by multiplying together all constants and eliminating $1$'s
where possible.  This leaves us with an expression which is the same as $E$ up
to moves of the form (1), (2), and (5).  Hence $E$ has the lowest complexity
among expressions for $T$, and so $\cpx{T}=\cpx{E}$, which as noted, is given by
the formula.

\end{proof}

With this, we now obtain our lower bound:

\begin{prop}
\label{polycpxbd}
Let $(f,C)$ be a low-defect pair of degree $k$, and suppose that $a$ is the
leading coefficient of $f$.  Then $C\ge \cpx{a} + k$.  Equivalently, if $f$ is a
low-defect polynomial of degree $k$ with leading coefficient $a$, then
$\cpx{f}\ge \cpx{a}+k$.
\end{prop}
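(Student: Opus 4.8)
The plan is to transfer the statement to the tree language of Section~\ref{structure} and read it off from the complexity formula of Proposition~\ref{exprcpxbd}(2): that formula expresses $\cpx{T}$ as a sum of edge-label complexities plus a sum of vertex-label complexities, and it turns out that the edge part alone accounts for the degree $k$, while the vertex part alone accounts for $\cpx{a}$. First, the two displayed formulations are equivalent, since the degree and the leading coefficient of $f$ depend only on $f$; so it suffices to show $\cpx{T}\ge\cpx{a}+k$ for every low-defect tree $T$ yielding $f$, because $\cpx{f}$ is the minimum of $\cpx{T}$ over such $T$. Fix such a $T$. Then the degree $k$ of $f$ is the number of non-root vertices of $T$, which, $T$ being a tree, is also the number of edges of $T$; and by Corollary~\ref{leadcoefftree} the leading coefficient is $a=\prod_{v}w(v)$, the product taken over all vertices, equivalently over those vertices $v$ with $w(v)>1$.

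Next I would apply Proposition~\ref{exprcpxbd}(2): $\cpx{T}$ equals $\sum_{e}\cpx{w(e)}$ over the edges, plus $\sum_{v}\cpx{w(v)}$ over the leaves, plus $\sum_{v}\cpx{w(v)}$ over the non-leaf vertices with $w(v)>1$. By submultiplicativity of integer complexity ($\cpx{mn}\le\cpx{m}+\cpx{n}$) we get $\cpx{a}\le\sum_{v:\,w(v)>1}\cpx{w(v)}$, and since each such vertex is either a leaf or a non-leaf vertex with $w(v)>1$, this bound is at most the sum of the two vertex-terms in the formula for $\cpx{T}$. Subtracting, $\cpx{T}-\cpx{a}\ge\sum_{e}\cpx{w(e)}$; and since every edge label has complexity at least $\cpx{1}=1$ and there are exactly $k$ edges, this gives $\cpx{T}\ge\cpx{a}+k$. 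The one point needing care is the degenerate case $a=1$, when the product over $\{v:w(v)>1\}$ is empty: then $\cpx{a}=\cpx{1}=1$, and in place of submultiplicativity one uses that any nonempty tree has a leaf, so $\sum_{v\ \textrm{a leaf}}\cpx{w(v)}\ge\cpx{1}=1\ge\cpx{a}$, after which the rest runs unchanged.

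The only real difficulty is this bit of bookkeeping: matching $\cpx{a}$ against the vertex part of the complexity formula while making sure that vertices labeled $1$, which contribute nothing to $a$, leave the whole $k$-term edge sum free to pay for the degree, and not overlooking the $a=1$ corner case. A tree-free alternative, which the paper's remarks suggest is also available, is a direct structural induction on rules (1)--(3) of Definition~\ref{polydef}: rule (1) is exactly the hypothesis $C\ge\cpx{k}$ on a degree-$0$ constant; rule (2) gives $C_1+C_2\ge(\cpx{a_1}+k_1)+(\cpx{a_2}+k_2)\ge\cpx{a_1a_2}+(k_1+k_2)$ by submultiplicativity; and rule (3) gives $C+D\ge(\cpx{a}+k)+\cpx{c}\ge\cpx{a}+(k+1)$, using $\cpx{c}\ge1$ together with the facts that $\otimes\,x+c$ raises the degree by one and leaves the leading coefficient unchanged.
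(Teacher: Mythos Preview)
Your argument is correct and follows essentially the same route as the paper's proof: pass to a low-defect tree $T$ yielding $f$, invoke the complexity formula of Proposition~\ref{exprcpxbd}(2), bound the edge sum below by the number $k$ of edges, and use submultiplicativity together with Corollary~\ref{leadcoefftree} to bound the vertex sum below by $\cpx{a}$. Your treatment is in fact slightly more careful than the paper's, which writes $\sum_{v:\,w(v)>1}\cpx{w(v)}\ge\bigl\|\prod_{v:\,w(v)>1}w(v)\bigr\|$ without singling out the empty-product case $a=1$; your separate handling of that corner case (using that a nonempty tree has a leaf) fills that small gap, and your alternative structural induction on Definition~\ref{polydef} is also valid.
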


\begin{proof}
Let $T$ be a low-defect tree giving rise to $f$ with $C\ge \cpx{T}$.  Then
\begin{eqnarray*}
\cpx{T} \ge \sum_{e\ \textrm{an edge}} \cpx{w(e)} +
	\sum_{\substack{v\ \textrm{a vertex}\\ w(v)>1}} \cpx{w(v)} \\
\ge \left(\sum_{e\ \textrm{an edge}} 1 \right)+
	\left\| \prod_{\substack{v\ \textrm{a vertex}\\ w(v)>1}} w(v)\right\|.
\end{eqnarray*}
That is, applying Corollary~\ref{leadcoefftree}, it is at least the number of
edges plus $\cpx{a}$.  Since the number of edges is one less than the number of
vertices, the number of edges is $k$.  So $C\ge \cpx{a}+k$.

The second statement then follows as $\cpx{f}$ is by definition the smallest $C$
among low-defect pairs $(f,C)$.
\end{proof}

In particular, the degree of a polynomial is bounded by its defect:

\begin{cor}
\label{polydftbd}
Let $(f,C)$ be a low-defect pair of degree $k$, and suppose that $a$ is the
leading coefficient of $f$.  Then $\dft(f,C)\ge \dft(a)+k\ge k$.  Equivalently,
$\dft(f)\ge \dft(a)+k\ge k$.
\end{cor}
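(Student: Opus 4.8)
The plan is to simply chain together the definition of $\dft(f,C)$ with the complexity lower bound from Proposition~\ref{polycpxbd} and the nonnegativity of the defect. Recall that by definition $\dft(f,C) = C - 3\log_3 a$, where $a$ is the leading coefficient of $f$, and recall that $\dft(a) = \cpx{a} - 3\log_3 a$. So the statement is really just the inequality $C \ge \cpx{a} + k$ re-expressed after subtracting $3\log_3 a$ from both sides.

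Concretely, I would argue: by Proposition~\ref{polycpxbd} we have $C \ge \cpx{a} + k$, hence
\[
\dft(f,C) = C - 3\log_3 a \ge \cpx{a} + k - 3\log_3 a = (\cpx{a} - 3\log_3 a) + k = \dft(a) + k.
\]
Then, since $\dft(a) \ge 0$ by part (1) of Theorem~\ref{oldprops}, we conclude $\dft(f,C) \ge \dft(a) + k \ge k$, as desired. The ``equivalently'' clause follows at once by specializing to $C = \cpx{f}$, since $\dft(f) := \dft(f,\cpx{f})$ and $\cpx{f}$ is the minimal admissible base complexity.

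There is no real obstacle here; the entire content has already been extracted into Proposition~\ref{polycpxbd} (which in turn rested on the tree formula for the leading coefficient, Corollary~\ref{leadcoefftree}, and the complexity formula for trees, Proposition~\ref{exprcpxbd}). The only thing to be careful about is bookkeeping the two parallel formulations (the pair version with a general $C$ and the polynomial version with $C = \cpx{f}$), and making sure the logarithmic terms cancel cleanly, which they do because both $\dft(f,C)$ and $\dft(a)$ are defined by subtracting $3\log_3 a$ for the \emph{same} $a$.
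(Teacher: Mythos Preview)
Your proof is correct and is essentially identical to the paper's own argument: both subtract $3\log_3 a$ from the inequality $C\ge\cpx{a}+k$ of Proposition~\ref{polycpxbd} and then invoke $\dft(a)\ge 0$. The only cosmetic difference is that you explicitly cite Theorem~\ref{oldprops}(1) for nonnegativity of $\dft(a)$, whereas the paper leaves this implicit.
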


\begin{proof}
By definition, $\dft(f,C)=C-3\log_3 a$.  So
\[ \dft(f,C) = C-3\log_3 a \ge \cpx{a} + k - 3\log_3 a = \dft(a) + k,\]
and $\dft(a)+k\ge k$.  The second statement then follows as $\dft(f)$ is just
the smallest value of $\dft(f,C)$ among low-defect pairs $(f,C)$.
\end{proof}

\section{The truncation operation}
\label{sectrunc}

Now, finally, we can describe the operation of truncating a low-defect
polynomial (or expression, or tree) to a given defect -- the ``filtering-down''
half of our method.  The results here will be phrased in terms of low-defect
pairs, but the analogues for low-defect expressions are clear.

\subsection{Truncations and their properties}

First we just describe truncating a low-defect polynomial in general:

\begin{prop}
\label{dirtrunc}
Let $(f,C)$ be a low-defect pair, say of degree $r$, and suppose $x_i$ is a
variable of $f$ which is minimal with respect to the nesting ordering.  Let
$k\ge0$ be an integer, and define
\[ g(x_1,\ldots,x_{i-1},x_{i+1},\ldots,x_r) :=
	f(x_1,\ldots,x_{i-1},3^k,x_{i+1},\ldots,x_r).\]
Then:
\begin{enumerate}
\item The polynomial $g$ is a low-defect polynomial, and $(g,C+3k)$ is a
low-defect pair.
\item If $a$ is the leading coefficient of $f$, then the leading coefficient of
$g$ is strictly greater than $a3^k$, and so $\dft(g,C+3k)<\dft(f,C)$.
\item The nesting order on the variables of $g$ is the restriction of the
nesting order on the variables of $f$ to
$\{x_1,\ldots,x_{i-1},x_{i+1},\ldots,x_r\}$.
\item For any $k_1,\ldots,k_{i-1},k_{i+1},\ldots,k_r$, we have
\[ \dft_{g,C+3k}(k_1,\ldots,k_{i-1},k_{i+1},\ldots,k_r) =
	\dft_{f,C}(k_1,\ldots, k_{i-1}, k, k_{i+1}, \ldots, k_r).\]
\end{enumerate}
\end{prop}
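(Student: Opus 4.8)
The plan is to handle the four assertions one at a time, starting from the key structural fact that, since $x_i$ is minimal in the nesting ordering, in any low-defect tree $T$ representing $f$ the vertex $v_i$ for $x_i$ is a leaf (by Proposition~\ref{treeorder}, minimality of $x_i$ says no variable's vertex is a proper descendant of $v_i$). For part (1) I would induct on a fixed construction of the pair $(f,C)$ by the rules of Definition~\ref{polydef}. If $f$ is constant there is nothing to prove. If $(f,C)=(f_1\otimes f_2,C_1+C_2)$, then $x_i$ is a variable of exactly one factor, say $f_1$, and is still minimal there (the nesting order on $f$ restricts to that of $f_1$ on $f_1$'s variables, with $f_1$'s and $f_2$'s variables mutually incomparable, by Proposition~\ref{treeorder} and the way the tree of a product is formed), so by the inductive hypothesis $(g_1,C_1+3k)$ is a low-defect pair, where $g_1$ is $f_1$ with $x_i\mapsto 3^k$, and then $g=g_1\otimes f_2$ gives $(g,C+3k)\in\mathscr{P}$ by rule (2). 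If $(f,C)=(h\otimes x_j+c,C'+D)$ with $x_j$ the freshly introduced variable: when $x_i=x_j$, the vertex of $x_j$ is the root of the tree of $h$, so minimality forces that tree to have no non-root vertices, i.e., $h$ to be a constant $b$; then $g=b\cdot 3^k+c$ with $\cpx{g}\le\cpx{b}+3k+\cpx{c}\le C'+D+3k=C+3k$, so $(g,C+3k)\in\mathscr{P}$ by rule (1). When $x_i\ne x_j$, $x_i$ is a variable of $h$, minimal there since $x_j$ sits above all of $h$'s variables, so induction gives $(g',C'+3k)\in\mathscr{P}$ (with $g'$ the substitution into $h$), and $g=g'\otimes x_j+c$ gives $(g,C+3k)\in\mathscr{P}$ by rule (3). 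The same induction shows $g$ uses all of the remaining variables, so $g$ has degree $r-1$.

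For part (2), Proposition~\ref{ttofformula} identifies the monomial of $f$ containing every variable except $x_i$ with the subgraph induced by $V(T)\setminus\{v_i\}$; as $v_i$ is a leaf this is a subtree, so that monomial occurs in $f$ with a positive integer coefficient $b$. Under $x_i\mapsto 3^k$ the only monomials of $f$ mapping (up to a power of $3$) to the product of all variables other than $x_i$ are this one and the leading monomial of $f$, so the leading coefficient of $g$ is $b+a\cdot 3^k>a\cdot 3^k$, where $a$ is the leading coefficient of $f$; hence
\[ \dft(g,C+3k)=(C+3k)-3\log_3(b+a\cdot 3^k)<(C+3k)-3\log_3(a\cdot 3^k)=C-3\log_3 a=\dft(f,C). \]
For part (3), Proposition~\ref{ttofformula} together with positivity of the coefficients shows that a squarefree monomial in the remaining variables occurs in $g$ exactly when the corresponding vertex set, together with the root, induces a subtree of $T$ avoiding $v_i$ (the case where it contains $v_i$ reduces to this since $v_i$ is a leaf); thus the monomials of $g$ are precisely those of a low-defect polynomial whose tree is $T$ with the leaf $v_i$ removed, and Proposition~\ref{recoverorder} then recovers the nesting order on $g$ as the descendant relation on that tree, which is the restriction to the remaining vertices of the descendant relation on $T$, i.e., of the nesting order on $f$. (Part (3) can also be proved by a structural induction parallel to that of part (1).)

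Part (4) is just unwinding the definitions: by definition $\dft_{g,C+3k}(k_1,\ldots,k_{i-1},k_{i+1},\ldots,k_r)$ equals $(C+3k)+3\sum_{j\neq i}k_j-3\log_3 g(3^{k_1},\ldots,3^{k_{i-1}},3^{k_{i+1}},\ldots,3^{k_r})$, and since $g(3^{k_1},\ldots,3^{k_{i-1}},3^{k_{i+1}},\ldots,3^{k_r})=f(3^{k_1},\ldots,3^{k_{i-1}},3^k,3^{k_{i+1}},\ldots,3^{k_r})$ and $(C+3k)+3\sum_{j\neq i}k_j=C+3(k_1+\cdots+k_{i-1}+k+k_{i+1}+\cdots+k_r)$, this is exactly $\dft_{f,C}(k_1,\ldots,k_{i-1},k,k_{i+1},\ldots,k_r)$. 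The main obstacle I expect is the bookkeeping in part (1): checking that minimality of $x_i$ propagates correctly through the $\otimes$ and $\otimes x_j+c$ steps, and in particular that $x_i$ can be the newly introduced variable only when the underlying polynomial is a constant -- which is exactly what keeps the new base complexity at $C+3k$ rather than something larger. The leaf-ness of $v_i$ (again coming from minimality) is likewise the crux of part (2).
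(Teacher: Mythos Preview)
Your argument is correct. For parts (1) and (4) it coincides with the paper's proof: the paper likewise runs a structural induction on a low-defect expression for $(f,C)$, splitting into the product case and the $E'\cdot x+c$ case (with the same subcase analysis showing that $x_i$ can equal the freshly introduced variable only when $E'$ is constant), and part (4) is the same one-line unwinding of definitions.

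Where you diverge is in parts (2) and (3). The paper folds these into the same structural induction as part (1), tracking the leading coefficient and the nesting order case by case through the recursion. You instead argue globally from the tree: for (2), you use Proposition~\ref{ttofformula} to see that the monomial missing only $x_i$ appears with positive coefficient (because $v_i$ is a leaf, so $V(T)\setminus\{v_i\}$ is still a subtree), giving the leading coefficient of $g$ as $a\cdot 3^k+b$ in one stroke; for (3), you identify the monomials of $g$ with the rooted subtrees of $T$ that miss $v_i$ and then invoke Proposition~\ref{recoverorder} to read off the nesting order. Your route is shorter and makes the role of ``$x_i$ minimal $\Leftrightarrow$ $v_i$ a leaf'' more transparent, at the cost of leaning on the tree--monomial correspondence; the paper's inductive route is more self-contained but repeats the same case split three times. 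Both are entirely sound.
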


\begin{proof}
Let $E$ be a low-defect expression of complexity at most $C$ giving rise to $f$;
we apply structural induction to prove parts (1), (2), and (3).  Note that $E$
cannot be an integer constant as then it would have no variables.

If $E=E'\cdot x+c$, there are two cases; either $E'$ has degree $0$, or it has
positive degree.  In the former case, $x$ is the unique minimal variable, so
$x_i=x$; say $E'$ evaluates to the constant $n$ and has complexity at most
$C'=C-\cpx{c}$.  Then $g$ is equal to the constant $n3^k+c$, which can be given
by a low-defect expression.  Furthermore, the complexity of this low-defect
expression is at most $C'+3k+\cpx{c}=C+3k$, so $(g,C+3k)$ is a low-defect pair.
And whereas the leading coefficient of $f$ was $n$, the leading coefficient
of $g$ is $n3^k+c>n3^k$.  Finally, $g$ has no variables, so part (3) is
trivially true.

Otherwise, if $E'$ has positive degree, then $x$ is not minimal, and the minimal
variables in $E$ are precisely the minimal variables in $E'$.  Assume without
loss of generality that $x=x_r$.  Say $E'$ has complexity at most
$C'=C-\cpx{c}$.  Let $f'$ be the polynomial coming from $E'$, and
\[ g'(x_1,\ldots,x_{i-1},x_{i+1},\ldots,x_{r-1}) :=
	f'(x_1,\ldots,x_{i-1},3^k,x_{i+1},\ldots,x_{r-1}).\]
Then by the inductive hypothesis, $g'$ is a low-defect polynomial, coming from
some low-defect expression $E''$ with complexity at most $C'+3k$.  So $g$ is a
low-defect polynomial as it comes from the low-defect expression $E''\cdot x+c$,
which has complexity at most
\[C'+3k+\cpx{c}=C+3k.\]  And if $a$ is the leading
coefficient of $f$, then it is also the leading coefficient of $f'$, and so by
the inductive hypothesis the leading coefficient of $g'$ is greater than $a3^k$,
but the leading coefficient of $g$ is the same as that of $g'$.  Finally, by the
inductive hypothesis, the nesting order on the variables of $g'$ is the
restriction of the nesting order of the variables of $f'$, and the nesting order
on the variables of $g$ is the same as that on the variables of $g'$, but with
$x_r$ added as a new maximum element; since the same relation holds between the
nesting order for $f$ and the nesting order for $f'$, part (3) is true in this
case.

This leaves the case where $E=E_1\cdot E_2$.  In this case, a minimal variable
of $E$ is either a minimal variable of $E_1$ or a minimal variable of $E_2$.
Suppose without loss of generality that
\[ E(x_1,\ldots,x_r)=E_1(x_1,\ldots,x_s)E_2(x_{s+1},\ldots,x_r)\]
and $i\le s$.  Say $E_1$ and $E_2$ give rise to polynomials $f'$ and $h$, and
$E_1$ has complexity at most $C'=C-\cpx{E_2}$.  Then if we define
\[ g'(x_1,\ldots,x_{i-1},x_{i+1},\ldots,x_s) :=
	f'(x_1,\ldots,x_{i-1},3^k,x_{i+1},\ldots,x_s),\]
by the inductive hypothesis, $g'$ is a low-defect polynomial, coming from some
low-defect expression $E'$ with complexity at most $C'+3k$.  So $g$ is a
low-defect polynomial as it comes from the low-defect expression $E'\cdot E_2$,
which has complexity at most $C'+3k+\cpx{E_2}=C+3k$.  And if $a_1$ is the
leading coefficient of $f'$ and $a_2$ is the leading coefficient of $h$, then
the leading coefficient of $f=f'\cdot h$ is $a_1 a_2$, while by the inductive
hypothesis, the leading coefficient of $g$ is strictly greater than $3^k a_1$,
and so the leading coefficient of $g=g'\cdot h$ is strictly greater than $3^k
a_1 a_2$.  Finally, the nesting order on the variables of $g$ is just the
disjoint union of the nesting order on the variables of $g'$ and the nesting
order on the variables of $h$, and the same relation holds between the nesting
order for $f$ and the nesting order for $f'$.  By the inductive hypothesis, the
nesting order for $g'$ is just the restriction of that for $f'$, so the same
relation holds between $g$ and $f$.

To prove the second statement in part (2), we note that if $a$ is the leading
coefficient of $f$ and $b$ is the leading coefficient of $g$, since $b>a3^k$,
\[
\dft(g,C+3k) = C+3k - 3\log_3(b) = C-3\log_3(b3^{-k})
	< C-3\log_3(a) = \dft(f,C).
\]

Finally, part (4) follows as
\begin{multline*}
\dft_{g,C+3k}(k_1,\ldots,k_{i-1},k_{i+1},\ldots,k_r) = \\
C+3k+3(k_1+\ldots+k_{i-1}+k_{i+1}+\ldots+k_r) -
	3\log_3 g(3^{k_1},\ldots,3^{k_{i-1}},3^{k_{i+1}},\ldots,3^{k_r})
= \\ C+3(k_1+\ldots+k_{i-1}+k+k_{i+1}+\ldots+k_r) -
	3\log_3 f(3^{k_1},\ldots,3^{k_{i-1}},3^k,3^{k_{i+1}},\ldots,3^{k_r})
= \\ \dft_{f,C}(k_1,\ldots,k_{i-1},k,k_{i+1},\ldots,k_r).
\end{multline*}

\end{proof}

\begin{defn}
Let $(f,C)$ be a low-defect pair, and let $(g,D)$ be obtained from it as in
Proposition~\ref{dirtrunc}; we will call $(g,D)$ a \emph{direct truncation} of
$f$, and $g$ an direct truncation of $f$.

Furthermore, we will define $(g,D)$ to be a \emph{truncation} of $(f,C)$ if
there are low-defect pairs $(f,C)=(f_0,C_0),(f_1,C_1),\ldots,(f_k,C_k)=(g,D)$
with $(f_{i+1},C_{i+1})$ a direct truncation of $(f_i,C_i)$.  Similarly in this
case we say $g$ is a truncation of $f$.
\end{defn}

Immediately we get:

\begin{prop}
\label{truncind}
Say $(f,C)$ is a low defect pair and $(g,D)$ is a truncation of it.  Then:
\begin{enumerate}
\item $\dft(g,D)<\dft(f,C)$.
\item The nesting order on the variables of $g$ is the restriction of the
nesting order on the variables of $f$.
\end{enumerate}
\end{prop}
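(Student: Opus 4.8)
The plan is to induct on the length of the chain of direct truncations witnessing that $(g,D)$ is a truncation of $(f,C)$. Write $(f,C)=(f_0,C_0),(f_1,C_1),\ldots,(f_k,C_k)=(g,D)$ with each $(f_{i+1},C_{i+1})$ a direct truncation of $(f_i,C_i)$. The case $k=1$ is precisely a single direct truncation, and for that case parts (2) and (3) of Proposition~\ref{dirtrunc} give both conclusions verbatim; so the work is entirely in composing the single-step statements. (Strictly speaking the definition allows $k=0$, in which case part (1) should be read with $\le$; I will treat the substantive case $k\ge 1$.)

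For part (1), Proposition~\ref{dirtrunc}(2) gives $\dft(f_{i+1},C_{i+1})<\dft(f_i,C_i)$ for every $i$, so the sequence $\dft(f_0,C_0),\ldots,\dft(f_k,C_k)$ is strictly decreasing; chaining these inequalities yields $\dft(g,D)=\dft(f_k,C_k)<\dft(f_0,C_0)=\dft(f,C)$. This is immediate and needs nothing beyond Proposition~\ref{dirtrunc}(2).

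For part (2), I would first note that forming a direct truncation deletes exactly one variable (a minimal one), so the variable set of $f_{i+1}$ is a subset of that of $f_i$, and hence the variables of $g$ form a subset $V_k$ of the variables $V_0$ of $f$, with $V_0\supseteq V_1\supseteq\cdots\supseteq V_k$. By Proposition~\ref{dirtrunc}(3), the nesting order on $V_{i+1}$ is the restriction to $V_{i+1}$ of the nesting order on $V_i$; since restricting a partial order to $V_k\subseteq\cdots\subseteq V_0$ one step at a time is the same as restricting it to $V_k$ in a single step, a trivial induction gives that the nesting order on the variables of $g=f_k$ is the restriction of the nesting order on the variables of $f=f_0$. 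I do not expect any real obstacle here: the proposition is just a ``transitivity'' repackaging of Proposition~\ref{dirtrunc}, and the only thing needing a moment's attention is the bookkeeping that each deleted variable is minimal at the stage it is deleted --- but this is built into the definition of direct truncation, so Proposition~\ref{dirtrunc} applies at every step.
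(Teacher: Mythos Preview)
Your argument is correct and is exactly the paper's approach: the paper's entire proof reads ``This follows immediately from iterating parts (2) and (3) of Proposition~\ref{dirtrunc},'' which is precisely your induction on the chain length. Your observation that the definition of truncation literally permits $k=0$ (making the strict inequality in part~(1) fail in that degenerate case) is a valid nitpick the paper glosses over; in practice the proposition is only invoked for nontrivial truncations, so treating the substantive case $k\ge 1$ as you do is appropriate.
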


\begin{proof}
This follows immediately from iterating parts (2) and (3) of
Proposition~\ref{dirtrunc}.
\end{proof}

So, when we truncate $f$, we are substituting powers of $3$ into some of the
variables, and leaving the other variables free.  Say $f$ has degree $r$, and
consider the function
\[ (k_1,\ldots,k_r) \mapsto f(3^{k_1},\ldots,3^{k_r}) \]
from $\mathbb{Z}_{\ge0}^r$ to $\mathbb{N}$; when we truncate $f$, we are fixing
the values of some of the $k_i$.  In a sense, we are restricting $f$ to a subset
of $\mathbb{Z}_{\ge0}^r$ fo the form $S_1\times\ldots\times S_r$, where each
$S_i$ is either a single point or all of $\mathbb{Z}_{\ge0}$.

As such we will want a way of talking about such sets; we will represent them by
elements of $(\mathbb{Z}_{\ge0}\cup\{*\})^r$, where here $*$ is just an abstract
symbol which is distinct from any whole number; it represents ``this position
can be any number'', or the set $\mathbb{Z}_{\ge0}$, where putting in an actual
number $n$ would represent ``this position must be $n$'', or the set $\{n\}$.
Let us formally define our way of getting a set from such an object, how we can
substitute these objects into low-defect polynomials:

\begin{defns}
Given $(k_1,\ldots,k_r)\in(\mathbb{Z}_{\ge0}\cup \{*\})^r$, we define
$S(k_1,\ldots,k_r)$ to be the set
\[\{(\ell_1,\ldots,\ell_r)\in\mathbb{Z}_{\ge0}^r : \ell_i=k_i\ \textrm{for}\
k_i\ne *\}.\]

Furthermore, given $f\in\mathbb{Z}[x_1,\ldots,x_r]$, we define the
$3$-substitution of $(k_1,\ldots,k_r)$ into $f$ to be the polynomial obtained by
substituting $3^{k_i}$ for $x_i$ whenever $k_i\ne *$.  If $(f,C)$ is a
low-defect pair, we define the $3$-substitution of $(k_1,\ldots,k_r)$ to be
$(g,D)$ where $g$ is the $3$-substitution of $(k_1,\ldots,k_r)$ into $f$, and
$D=C+3\sum_{k_i\ne *} k_i$.
\end{defns}

Be warned that in general, $3$-substituting into a low-defect pair may not yield
a low-defect pair, if one substitutes into the wrong variables.  For instance,
if $(f,C)=((3x_1+1)x_2+1,5)$, then $3$-substituting in $(*,1)$ yields
$(9x+4,8)$, which is not a low-defect pair.  And if
$(f,C)=((3x_1+1)(3x_2+1)x_3+1,9)$, and one $3$-substitutes in $(*,*,0)$, then
one obtains $(9x_1x_2+3x_1+3x_2+2,9)$, the first element of which is not a
low-defect polynomial at all.

However, in what follows, we will only be using this notion in cases where it
does, in fact, turn out to be a low-defect pair.  Specifically, in the following
cases:

\begin{prop}
Let $(f,C)$ be a low-defect pair, and let
$(k_1,\ldots,k_r)\in(\mathbb{Z}_{\ge0}\cup\{*\})^r$ be such that the set of $i$
for which $k_i\ne *$ corresponds to a downward-closed subset of the variables of
$f$.  Let $(g,D)$ denote the $3$-substitution of $(k_1,\ldots,k_r)$ into
$(f,C)$.  Then:
\begin{enumerate}
\item The pair $(g,D)$ is a truncation of $(f,C)$ (and hence a low-defect pair).
\item Let $t$ be the number of $i$ such that $k_i=*$, and let $\iota$ be the map
from $\mathbb{Z}_{\ge0}^t$ to $\mathbb{Z}_{\ge0}^r$ given by inserting the
arguments $(\ell_1,\ldots,\ell_t)$ into the coordinates of $(k_1,\ldots,k_r)$
where $k_i=*$.  Then $\dft_{g,D}=\dft_{f,C}\circ \iota$.
\end{enumerate}
Furthermore, all truncations of $(f,C)$ arise in this way.
\end{prop}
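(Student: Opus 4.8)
The plan is to reduce the whole statement to iterated use of Proposition~\ref{dirtrunc}; the one genuinely new ingredient is a combinatorial reformulation of the hypothesis ``the set of positions $i$ with $k_i\neq *$ is downward-closed in the nesting order'' as ``the substitution can be carried out one variable at a time, at each stage substituting into a variable that is minimal.''

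First I would prove the following lemma about the nesting order of $f$ (a finite partial order, so linear extensions exist): a subset $S$ of the variables of $f$ is downward-closed if and only if its elements can be listed as $x_{i_1},\ldots,x_{i_k}$ so that, for every $j$, the variable $x_{i_j}$ is minimal with respect to the nesting order among $\{x_1,\ldots,x_r\}\setminus\{x_{i_1},\ldots,x_{i_{j-1}}\}$. For the ``only if'' direction, fix a linear extension of the nesting order on $\{x_1,\ldots,x_r\}$ and list $S$ in the induced order; if some $w\prec x_{i_j}$ still lay in $\{x_1,\ldots,x_r\}\setminus\{x_{i_1},\ldots,x_{i_{j-1}}\}$, then $w\in S$ by downward-closedness, so $w$ would have been listed before $x_{i_j}$, a contradiction. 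For the ``if'' direction, suppose $x_{i_j}\in S$ and $w\preceq x_{i_j}$ with $w\neq x_{i_j}$, so $w\prec x_{i_j}$; since $x_{i_j}$ is minimal among $\{x_1,\ldots,x_r\}\setminus\{x_{i_1},\ldots,x_{i_{j-1}}\}$, the variable $w$ must be one of $x_{i_1},\ldots,x_{i_{j-1}}$, hence in $S$.

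Now for part (1), I would apply the lemma to $S=\{x_i:k_i\neq *\}$ and perform direct truncations in the order $x_{i_1},\ldots,x_{i_k}$, substituting $3^{k_{i_j}}$ at stage $j$. This is legitimate because of part (3) of Proposition~\ref{dirtrunc}: after each direct truncation the nesting order on the surviving variables is exactly the restriction of the original nesting order, so the lemma's condition that $x_{i_j}$ be minimal among the surviving variables is precisely the hypothesis that Proposition~\ref{dirtrunc} demands at stage $j$. One checks by a short induction over the stages that the pair produced at the end is exactly $(g,D)$: substituting numbers one at a time does not depend on the order, so the polynomial is the simultaneous $3$-substitution $g$, and the base complexity accumulates to $C+3\sum_{k_i\neq *}k_i=D$. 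By the definition of truncation, $(g,D)$ is therefore a truncation of $(f,C)$, and a low-defect pair by Proposition~\ref{dirtrunc}(1). Part (2) would then follow by iterating part (4) of Proposition~\ref{dirtrunc}: each stage gives $\dft_{g_j,D_j}=\dft_{g_{j-1},D_{j-1}}\circ(\textrm{re-insert }k_{i_j}\textrm{ in its slot})$, and composing these re-insertions over $j=1,\ldots,k$ yields exactly the map $\iota$, so $\dft_{g,D}=\dft_{f,C}\circ\iota$.

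For the final clause, that every truncation of $(f,C)$ arises this way, I would run the construction backwards. By definition a truncation is a chain of direct truncations; say stage $j$ substitutes $3^{m_j}$ (with $m_j$ a nonnegative integer) into a minimal variable of the polynomial produced at stage $j-1$. Applying part (3) of Proposition~\ref{dirtrunc} along the chain lets us identify that minimal variable with a genuine variable $x_{i_j}$ of the original $f$, and the same part (3) shows $x_{i_j}$ is minimal among $\{x_1,\ldots,x_r\}\setminus\{x_{i_1},\ldots,x_{i_{j-1}}\}$ for each $j$. By the lemma, $\{x_{i_1},\ldots,x_{i_k}\}$ is downward-closed; setting $k_{i_j}:=m_j$ and $k_i:=*$ for the remaining positions, the chain coincides --- same polynomial, same accumulated base complexity --- with the $3$-substitution of $(k_1,\ldots,k_r)$ into $(f,C)$. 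I expect the only delicate point to be the bookkeeping here: tracking, along a chain of direct truncations, the identification between the ``current'' minimal variable and the original variable it descends from. But this identification is exactly what Proposition~\ref{dirtrunc}(3) supplies, so the step is routine rather than substantive.
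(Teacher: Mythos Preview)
Your proposal is correct and follows essentially the same approach as the paper: both arguments order the variables with $k_i\ne *$ compatibly with the nesting order, perform direct truncations one at a time using Proposition~\ref{dirtrunc}(3) to guarantee minimality at each stage, iterate Proposition~\ref{dirtrunc}(4) for part (2), and handle the converse by tracking the substituted variables back to $f$. The only cosmetic difference is that you isolate the combinatorial fact about downward-closed sets as an explicit if-and-only-if lemma, whereas the paper argues the ``only if'' direction inline and handles the converse by induction on the number of truncation steps (implicitly using the ``if'' direction).
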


\begin{proof}
We first prove part (1).  Let $i_1, \ldots, i_s$ be the indices for which
$k_i\ne *$, enumerated in an order such that if $x_{i_j} \preceq x_{i_{j'}}$
then $i_j \le i_{j'}$.  Let $(f_0,C_0)=(f,C)$.  Now, for $1\le j\le s$, given
$(f_{j-1},C_{j-1})$, we will take $(f_j,C_j)$ to be the direct truncation of
$(f_{j-1},C_{j-1})$ where $3^{k_{i_j}}$ is substituted into $x_{i_j}$.  Of
course, in order for this to be a direct truncation, $x_{i_j}$ must be minimal
in $f_{j-1}$.  But this follows due to the order we have enumerated the
elements; by assumption, each $x_{i_j}$ is minimal in
$\{x_{i_j},\ldots,x_{i_s}\}$, and since $\{x_{i_1},\ldots,x_{i_s}\}$ is
downwardly closed in $\{x_1,\ldots,x_r\}$, we have that
$\{x_{i_j},\ldots,x_{i_s}\}$ is downwardly closed in
$\{x_1,\ldots,x_r\}\setminus\{x_{i_1},\ldots,x_{i_{j-1}}\}$, and so $x_{i_j}$ is
minimal in $\{x_1,\ldots,x_r\}\setminus\{x_{i_1},\ldots,x_{i_{j-1}}\}$.  And by
Proposition~\ref{truncind}, this last set is precisely the set of variables of
$f_j$, with the same nesting order.  Thus this is indeed a truncation.

Part (2) follows by simply iterating part (4) of Proposition~\ref{dirtrunc} in
the above.  Finally, we can see that every truncation arises in this way by
inducting on the number of steps in the truncation.  If there are no steps, then
this is true with $(k_1,\ldots,k_r)=(*,\ldots,*)$.  Otherwise, say that
$(f_s,C_s)$ is an $s$-step truncation of $(f,C)$ and that $(f_{s+1},C_{s+1})$ is
a direct truncation of it; we assume by induction that $(f_s,C_s)$ is the
$3$-substitution into $(f,C)$ of some tuple
$(k_1,\ldots,k_r)\in(\mathbb{Z}_{\ge0}\cup\{*\})^r$.  Then $(f_{s+1},C_{s+1})$
is the $3$-substitution into $(f_s,C_s)$ of some tuple
$(*,\ldots,*,\ell_j,*,\ldots,*)\in(\mathbb{Z}_{\ge0}\cup\{*\})^{r-s}$ (here
$\ell_j\ne *)$.  This makes it the $3$-substitution into $(f,C)$ of some tuple
$(k'_1,\ldots,k'_r)$, where $k'_i=k_i$ when $k_i\ne *$, and $k_i=\ell_j$ for one
particular $i$ with $k_i=*$.
\end{proof}

So, in fact, we'll only be using $3$-substitution in cases where it yields a
truncation; or, really, we'll just be using it as another way of thinking about
truncation.

\subsection{Truncating a polynomial to a given defect}
\label{polytruncsec}

Having discussed truncation in general, we can now discuss how to truncate a
low-defect polynomial to a given defect.  Earlier, in Proposition~\ref{supdfts},
we showed that for a low-defect pair $(f,C)$, the number $\dft(f,C)$ is the
least upper bound of the values of $\dft_{f,C}$.  Now we show that something
stronger is true:

\begin{prop}
\label{minlimit}
Let $(f,C)$ be a low-defect pair of degree $r$.  Say $x_{i_j}$, for $1\le j\le
s$, are the minimal variables of $f$.  Then
\[ \lim_{k_{i_1},\ldots,k_{i_s}\to\infty}
	\dft_{f,C}(k_1,\ldots,k_r) = \dft(f,C)\]
(where the other $k_i$ remain fixed).
\end{prop}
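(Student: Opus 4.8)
The plan is to reduce the claim to a statement about the reverse polynomial of $f$. Recall from the proof of Proposition~\ref{supdfts} the polynomial $g(x_1,\ldots,x_r) = x_1\cdots x_r\, f(x_1^{-1},\ldots,x_r^{-1})$: it has nonnegative coefficients, the coefficient of $\prod_{i\in S} x_i$ in $g$ equals the coefficient of $\prod_{i\notin S} x_i$ in $f$, its constant term equals the leading coefficient $a$ of $f$, and
\[ \dft_{f,C}(k_1,\ldots,k_r) = C - 3\log_3 g(3^{-k_1},\ldots,3^{-k_r}). \]
Since $\log_3$ is continuous at $a>0$ and $\dft(f,C) = C - 3\log_3 a$, it suffices to show that $g(3^{-k_1},\ldots,3^{-k_r}) \to a$ as $k_{i_1},\ldots,k_{i_s}\to\infty$ with the other $k_i$ held fixed.

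The key structural input is Corollary~\ref{excludedminl}. Every non-constant monomial $\prod_{i\in S}x_i$ of $g$ (so $S\neq\emptyset$) corresponds, under the coefficient correspondence above, to the non-leading monomial $\prod_{i\notin S}x_i$ of $f$; by Corollary~\ref{excludedminl} that monomial of $f$ omits some minimal variable $x_{i_j}$, i.e.\ $i_j\in S$, so the monomial of $g$ is divisible by $x_{i_j}$. Hence I can write $g = a + \sum_\ell c_\ell m_\ell$ with each $c_\ell \ge 0$ and each monomial $m_\ell$ divisible by at least one of $x_{i_1},\ldots,x_{i_s}$.

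Now I would estimate directly. Substituting $x_i = 3^{-k_i}\in(0,1]$, for each $\ell$ the monomial $m_\ell$ has a factor $3^{-k_{i_j}}$ for some $1\le j\le s$ while every remaining factor is at most $1$, so $0 \le m_\ell(3^{-k_1},\ldots,3^{-k_r}) \le \max_{1\le j\le s} 3^{-k_{i_j}}$. Summing over the finitely many $\ell$,
\[ 0 \le g(3^{-k_1},\ldots,3^{-k_r}) - a \le \Bigl(\sum_\ell c_\ell\Bigr)\,\max_{1\le j\le s} 3^{-k_{i_j}}, \]
and the right-hand side tends to $0$ as $k_{i_1},\ldots,k_{i_s}\to\infty$. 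Therefore $g(3^{-k_1},\ldots,3^{-k_r})\to a$, and applying $C - 3\log_3(\cdot)$ gives the asserted limit.

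I do not expect a genuine obstacle: the entire content lies in invoking Corollary~\ref{excludedminl}, which is exactly what makes sending the minimal variables (and only those) to infinity enough to wash out every term of $f$ except the leading one. Were one instead to send some other collection of variables to infinity, a non-leading monomial of $f$ might involve all of them while omitting a non-minimal variable; the corresponding term of $g$ would then be constant in those arguments, and the limit would fall strictly below $\dft(f,C)$, consistent with the strict inequality in Proposition~\ref{dftbd}(2).
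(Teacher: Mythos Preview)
Your proof is correct and follows essentially the same approach as the paper: pass to the reverse polynomial $g$, invoke Corollary~\ref{excludedminl} to see that every non-constant monomial of $g$ contains some minimal variable, and conclude that $g(3^{-k_1},\ldots,3^{-k_r})\to a$. Your explicit bound $\bigl(\sum_\ell c_\ell\bigr)\max_j 3^{-k_{i_j}}$ and your closing remark are a bit more detailed than the paper's version, but the argument is the same.
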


\begin{proof}
Consider once again $g$, the reverse polynomial of $f$:
\[g(x_1,\ldots,x_r)=x_1\ldots x_r f(x_1^{-1},\ldots,x_r^{-1}).\]
So $g$ is a multilinear polynomial in $x_1, \ldots, x_r$, with the coefficient
of $\prod_{i\in S} x_i$ in $g$ being the coefficient of $\prod_{i\notin S} x_i$
in $f$.  Let $a$ denote the leading coefficient of $f$, which is also the
constant term of $g$.

By Corollary~\ref{excludedminl}, every non-leading term of $f$ excludes some
minimal variable.  Hence every non-constant term of $g$ includes some minimal
variable.  So if we once again write
\[ \dft_{f,C}(k_1,\ldots,k_r)= C-3\log_3 g(3^{-k_1},\ldots,3^{-k_r}), \]
we see that as the minimal variables approach infinity, then each non-constant
term of $g(3^{-k_1},\ldots,3^{-k_r})$ approaches $0$, and so
$g(3^{-k_1},\ldots,3^{-k_r})$ approaches $a$.  So once again we have

\[ \lim_{k_{i_1},\ldots,k_{i_s}\to\infty} \dft_{f,C}(k_1,\ldots,k_r)=
	C-3\log_3 a=\dft(f,C).\]
\end{proof}

One can obtain numerical versions of this proposition, but we do not bother to
state them here.

We can restate this proposition as follows:

\begin{cor}
\label{dirtrunc2}
Let $(f,C)$ be a low-defect pair, say of degree $r>0$, let $0\le s<\dft(f,C)$ be
a real number.
Then there exists a number $K$ such that, whenever
$\dft_{f,C}(k_1,\ldots,k_r)<s$, then $k_i\le K$ for some $i$ such that $x_i$ is
minimal in the nesting ordering for $f$.
\end{cor}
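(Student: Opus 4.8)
The plan is to obtain this as the contrapositive of Proposition~\ref{minlimit}, with one extra monotonicity observation to make the bound uniform in the variables that are \emph{not} minimal. Fix $s$ with $0\le s<\dft(f,C)$. It suffices to produce an integer $K\ge 0$ such that, whenever $k_i>K$ holds for \emph{every} $i$ with $x_i$ minimal in the nesting ordering, one has $\dft_{f,C}(k_1,\ldots,k_r)\ge s$; the corollary is then exactly the contrapositive of this.

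First I would reduce to a single-variable statement. By Proposition~\ref{supdfts}, $\dft_{f,C}$ is strictly increasing in each of its variables, so replacing each non-minimal coordinate of $(k_1,\ldots,k_r)$ by $0$, and each minimal coordinate by $m:=\min_j k_{i_j}$ (the minimum over the minimal variables $x_{i_1},\ldots,x_{i_s}$), only decreases the value of $\dft_{f,C}$. Writing $\psi(m)$ for this decreased value --- namely $\dft_{f,C}$ evaluated with every minimal variable set to $m$ and every other variable set to $0$ --- we have $\dft_{f,C}(k_1,\ldots,k_r)\ge\psi\!\left(\min_j k_{i_j}\right)$, and $\psi$ is a nondecreasing function of the single integer $m$. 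This is the step that makes the eventual choice of $K$ independent of the non-minimal coordinates: the smallest defect compatible with given values of the minimal variables is attained when all the other variables vanish.

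Then I would invoke Proposition~\ref{minlimit}. Sending the minimal variables to infinity along the diagonal $k_{i_1}=\cdots=k_{i_s}=m\to\infty$, with the remaining variables held at $0$, is a legitimate way of taking the limit in that proposition, so $\lim_{m\to\infty}\psi(m)=\dft(f,C)$. Since $\dft(f,C)>s$, there is an integer $K\ge0$ with $\psi(K+1)>s$, and then monotonicity of $\psi$ gives $\psi(m)>s$ for all $m\ge K+1$. Hence, if $k_{i_j}>K$ for every minimal $x_{i_j}$, then $\min_j k_{i_j}\ge K+1$ and $\dft_{f,C}(k_1,\ldots,k_r)\ge\psi\!\left(\min_j k_{i_j}\right)>s$, completing the argument.

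I do not anticipate a genuine difficulty: all the real work is in Proposition~\ref{minlimit}, and the only point needing care --- uniformity over the non-minimal variables --- is handled by the monotonicity reduction above. If an explicit $K$ were wanted, one could instead argue directly via the reverse polynomial $g$ of $f$, as in the proof of Proposition~\ref{minlimit}: every non-constant term of $g$ contains a minimal variable, so $g(3^{-k_1},\ldots,3^{-k_r})\le a+B\,3^{-m}$ where $a$ is the leading coefficient of $f$, $B$ is the sum of the remaining coefficients of $f$ (and $B\ge1$ since $r>0$ forces $f$ to have a term besides its leading one), and $m=\min_j k_{i_j}$; solving $C-3\log_3(a+B\,3^{-m})\ge s$, using $a=3^{(C-\dft(f,C))/3}<3^{(C-s)/3}$, yields an admissible $K=\max\!\left(0,\big\lfloor\log_3\!\big(B/(3^{(C-s)/3}-a)\big)\big\rfloor\right)$.
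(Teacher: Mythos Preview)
Your proof is correct and follows essentially the same approach as the paper: both invoke Proposition~\ref{minlimit} along the diagonal where minimal variables equal $K+1$ and non-minimal variables equal $0$, then use the monotonicity from Proposition~\ref{supdfts} to conclude. Your version is more explicit (introducing $\psi$ and the closing computation of an admissible $K$), but the underlying argument is the same.
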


\begin{proof}
By Proposition~\ref{minlimit}, since $s<\dft(f,C)$, we can choose some $K$ such that $\dft_{f,C}(k_1,\ldots,k_r)\ge s$, where
$k_i=K+1$ if $x_i$ is minimal in the nesting ordering and $k_i=0$ otherwise.
Then if for some $\ell_1,\ldots,\ell_r$ we have
$\dft_{f,C}(\ell_1,\ldots,\ell_r)<s$, then since $\dft_{f,C}$ is increasing in
all variables, there is some $i$ such that $\ell_i\le K$.
\end{proof}

With this, we can now finally describe truncating a low-defect pair to a
specified defect:

\begin{thm}
\label{thmtrunc}
Let $(f,C)$ be a low-defect pair, say of degree $r$, let $s\ge 0$ be a real
number, and let $S=\{ (k_1,\ldots,k_r) : \dft_{f,C}(k_1,\ldots,k_r) < s\}$.
Then there exists a finite set $T\subseteq (\mathbb{Z}_{\ge0} \cup \{*\})^r$
such that:
\begin{enumerate}
\item We have $S = \bigcup_{p\in T} S(p)$.
\item For each $p$ in $T$, the set of $i$ for which $k_i\ne *$ corresponds to a
subset of the variables of $f$ which is downward closed (under the nesting
ordering); hence if $(g,D)$ denotes the $3$-substitution of $p$ into $(f,C)$,
then $(g,D)$ is a truncation of $(f,C)$.  Furthermore, we have $\dft(g,D)\le s$,
and hence $\deg g\le \lfloor s\rfloor$; and if $g$ has degree $0$, the former
inequality is strict.
\end{enumerate}
\end{thm}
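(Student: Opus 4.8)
The plan is to argue by induction on the degree $r$ of $f$, building $T$ in a way that parallels the recursive definition of truncation. When $r=0$ the function $\dft_{f,C}$ is the constant $\dft(f,C)$, so $S$ is either all of $\mathbb{Z}_{\ge0}^0$ (if $\dft(f,C)<s$) or empty, and one takes $T=\{()\}$ or $T=\emptyset$ accordingly; in the nonempty case the $3$-substitution is $(f,C)$ itself, which has degree $0$ and defect $\dft(f,C)<s$, as required. Still among the easy cases, suppose $r>0$ but $\dft(f,C)\le s$: by Proposition~\ref{dftbd}(2) every value $\dft_{f,C}(k_1,\ldots,k_r)$ is then strictly below $\dft(f,C)\le s$, so $S=\mathbb{Z}_{\ge0}^r$ and $T=\{(*,\ldots,*)\}$ works, the sole $3$-substitution being $(f,C)$, of defect $\le s$ and of degree $r\le\dft(f,C)\le s$, hence $r\le\lfloor s\rfloor$ by Corollary~\ref{polydftbd}.

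The substantive case is $r>0$ together with $\dft(f,C)>s$, where Corollary~\ref{dirtrunc2} applies and produces a bound $K$ with the property that any tuple in $S$ has $k_i\le K$ for at least one index $i$ of a minimal variable. For each minimal index $i$ and each value $v\in\{0,1,\ldots,K\}$, I would form the direct truncation $(f^{(i,v)},C+3v)$ of $(f,C)$ obtained by substituting $3^v$ for $x_i$; this has degree $r-1$, so the inductive hypothesis supplies a finite set $T^{(i,v)}$ of tuples (indexed by the variables other than $x_i$) satisfying the conclusions of the theorem relative to $(f^{(i,v)},C+3v)$ and $s$. Then I would let $T$ be the union, over all minimal $i$ and all $v\le K$, of the tuples obtained from the elements of $T^{(i,v)}$ by inserting the value $v$ into coordinate $i$. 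This $T$ is finite, and for $p\in T$ built from $q\in T^{(i,v)}$ the $3$-substitution of $p$ into $(f,C)$ is literally the $3$-substitution of $q$ into $(f^{(i,v)},C+3v)$.

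The verification of parts (1) and (2) then rests on earlier results. For (1): by Proposition~\ref{dirtrunc}(4), $\dft_{f^{(i,v)},C+3v}$ agrees with the restriction of $\dft_{f,C}$ to the slice $k_i=v$; so a tuple lying in some $S(p)$, with $p$ coming from $q\in T^{(i,v)}$, has $k_i=v$ and its remaining coordinates in the set cut out for $(f^{(i,v)},C+3v)$, whence $\dft_{f,C}<s$, i.e.\ it lies in $S$; conversely, a tuple in $S$ has, by Corollary~\ref{dirtrunc2}, some minimal $i$ with $k_i\le K$, and taking $v=k_i$ and applying the inductive hypothesis to $(f^{(i,v)},C+3v)$ places it in the corresponding $S(p)$. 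For (2): the non-$*$ coordinates of such a $p$ are $\{x_i\}$ together with those of $q$; the latter are downward closed in the variables of $f^{(i,v)}$, whose nesting order is the restriction to the remaining variables of that of $f$ by Proposition~\ref{dirtrunc}(3), and since $x_i$ is minimal in $f$, adjoining it keeps the set downward closed in the variables of $f$ — so the preceding proposition on $3$-substitution along downward-closed variable sets applies and identifies $(g,D)$ as a truncation; the bounds $\dft(g,D)\le s$, $\deg g\le\lfloor s\rfloor$, and the strict inequality when $\deg g=0$ are then exactly what the inductive hypothesis already delivered for the members of $T^{(i,v)}$.

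The main obstacle — or rather the one point requiring genuine care — is that Corollary~\ref{dirtrunc2} only guarantees that \emph{some} minimal coordinate is bounded, not all of them; this is why $T$ must branch over both the choice of which minimal variable to fix and over the value to fix it to, and why one must check that this branching reproduces $S$ \emph{exactly} rather than merely covering it. That check, as sketched above, reduces to Proposition~\ref{dirtrunc}(4) in the ``$\subseteq S$'' direction and to Corollary~\ref{dirtrunc2} in the ``$\supseteq S$'' direction, while the downward-closedness bookkeeping reduces to a minimal variable staying minimal under the recursion and to truncation merely restricting the nesting order. Termination is immediate, since the degree drops by one at each recursive step.
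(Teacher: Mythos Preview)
Your proposal is correct and follows essentially the same approach as the paper's proof: induction on $r$, with the same case split ($r=0$; $r>0$ with $\dft(f,C)\le s$; $r>0$ with $\dft(f,C)>s$), and in the last case the same use of Corollary~\ref{dirtrunc2} to bound one minimal coordinate, followed by branching over the choice of minimal variable and its value, applying the inductive hypothesis to each resulting direct truncation, and pulling the resulting tuples back by inserting the fixed coordinate. The verifications you sketch via Proposition~\ref{dirtrunc}(3),(4) and the downward-closedness bookkeeping match the paper's as well.
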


\begin{proof}
We prove the statement by induction on $r$.

Suppose $r=0$, that is to say, $f$ is a constant $n$.  If $s>\dft(f,C)$, then
we may take $T=\{()\}$, where here $()$ indicates the unique element of
$(\mathbb{Z}_{\ge0} \cup \{*\})^0$.  For $S()=\{()\}$, and $S=\{()\}$ as well,
for $\dft_{f,C}()=C-3\log_3 n = \dft(f,C)<s$.  So the first condition is
satisfied.  For the second condition, the set of indices used is the empty set,
we have $(g,D)=(f,C)$ (hence $(g,D)$ is trivially a truncation), and so
$\dft(g,D)=\dft(f,C)<s$.

Otherwise, if $s\le \dft(f,C)$, we take $T=\emptyset$, so $\bigcup_{p\in T}
S(p)=\emptyset$.  Since, as was noted above, $\dft_{f,C}()=\dft(f,C)$, we have
$\dft_{f,C}()\ge s$, and hence $S=\emptyset$; thus the first condition is
satisfied.  The second condition is satisfied trivially.

Now suppose that $r>0$.  Once again, we have two cases.  If $s\ge \dft(f,C)$,
then we may take $T=\{(*,\ldots,*)\}$.  By Proposition~\ref{dftbd}, for any
$(k_1,\ldots,k_r)\in \mathbb{Z}_{\ge0}^r$, we have
$\dft_{f,C}(k_1,\ldots,k_r)<\dft(f,C)\le s$, i.e.
$S=\mathbb{Z}_{\ge0}^r=S(*,\ldots,*)$, satisfying the first condition.  For the
second condition, we once again have that the set of indices used is the null
set, so $(g,D)=(f,C)$, and so is trivially a truncation, and
$\dft(g,D)=\dft(f,C)\le s$.

This leaves the case where $r>0$ and $s<\dft(f,C)$.  In this case, we may apply
Corollary~\ref{dirtrunc2}, and choose a $K$ such that whenever
$\dft_{f,C}(k_1,\ldots,k_r)<s$, then $k_i \le K$ for some $i$ which is minimal
in the nesting ordering.  That is to say, if we define
\[ T_0 := \{(*,\ldots,*,k_i,*,\ldots,*) : x_i\ \textrm{minimal in nesting
ordering},\ k_i\le K\}, \]
then $S\subseteq \bigcup_{p\in T_0} S(p)$, and for each $p\in T$, the
$3$-substitution of $p$ into $(f,C)$ is a direct truncation of $(f,C)$.
However, we still do not necessarily have that $\dft(g,D)\le s$, nor do we
necessarily have equality in the first condition.  This is where we apply the
inductive hypothesis.

For each $p\in T_0$, let $(g_p, D_p)$ be the $3$-substitution of $p$ into
$(f,C)$; this is a direct truncation of $(f,C)$.  Apply the inductive hypothesis
to each $(g_p,D_p)$ to obtain $T_p \subseteq
(\mathbb{Z}_{\ge0}\cup\{*\})^{r-1}$.  We can then pull this back to $T'_p
\subseteq (\mathbb{Z}_{\ge0}\cup\{*\})^r$; since $p=(*,\ldots,*,k_i,*,\ldots)$
for some position $i$ and some number $k_i$, we can pull back
$q=(\ell_1,\ldots,\ell_{i-1},\ell_{i+1},\ldots,\ell_r)\in T_p$ (where here we
may have $\ell_j= *$) to
$q':=(\ell_1,\ldots,\ell_{i-1},k_i,\ell_{i+1},\ldots,\ell_r)$.  Finally we can
take $T=\bigcup_{p\in T_0} T'_p$.

It remains to show that $T$ has the desired properties.  Say we have an element
of $T$; it is an element of some $T'_p$, i.e., with the notation above, it has
the form $q'$ for some $q\in T_p$.  Say $p=(*,\ldots,*,k_i,*,\ldots)$.  The
indices used in $q$ correspond to some downward closed subset of the variables
of $(g_p, D_p)$, i.e.~to a downward closed subset of
$\{x_1,\ldots,x_{i-1},x_{i+1},\ldots,x_r\}$.  Since $x_i$ is minimal in
$\{x_1,\ldots,x_r\}$, adding it in again results in a downward closed set.

Now we check that $S\subseteq \bigcup_{p\in T} S(p)$.  Say
$\dft_{f,C}(k_1,\ldots,k_r)<s$; then there is some $i$ with $x_i$ minimal and
$k_i\le K$.  Let $p$ be the corresponding element of $T_0$ and $(g_p,D_p)$ as
above.  Then by Proposition~\ref{dirtrunc},
$\dft_{g_p,D_p}(k_1,\ldots,k_{i-1},k_{i+1},\ldots,k_r) =
\dft_{f,C}(k_1,\ldots,k_r)$, and so $(k_1,\ldots,k_{i-1},k_{i+1},\ldots,k_r)\in
T_p$, and so $(k_1,\ldots,k_r)\in T'_p\subseteq T$, as needed.

Suppose now that we take an element of $T$; write it as $q'\in T'_p$ for some
$p$ and some $q\in T_p$, using the notation above.  Then $(g_{q'},D_{q'})$ can
also be obtained by $3$-susbtituting $q$ into $(g_p,D_p)$; hence by the
inductive hypothesis, $\dft(g_{q'},D_{q'})\le s$, and this is strict if $\deg
g_{q'}=0$.  This then proves as well that $S\supseteq \bigcup_{p\in T}S(p)$; say
$q'\in T$, write $q'=(k_1,\ldots,k_r)$, and let $i_1,\ldots,i_s$ be the indices
for which $k_i= *$.  Then for $(\ell_1,\ldots,\ell_r)\in S(q')$, we may write
$\dft_{f,C}(\ell_1,\ldots,\ell_r) =
\dft_{g_{q'},D_{q'}}(\ell_{i_1},\ldots,\ell_{i_s})$, and this latter is less
than $s$, since it is at most $\dft(g_{q'},D_{q'})$, and strictly less than it
if $\deg g_{q'}>0$.  This proves the theorem.
\end{proof}

And if we can truncate one low-defect polynomial to a given defect, we can
truncate many low-defect polynomials to that same defect.  Here, at last, is the
result of taking the ``building-up'' Theorem~\ref{oldmainthm}, and applying our
new ``filtering-down'' step:

\begin{thm}
\label{mainthm}
For any real $s\ge 0$, there exists a finite set $\sS_s$ of low-defect pairs
satisfying the following conditions:
\begin{enumerate}
\item For any $n\in B_s$, there is some low-defect pair in $\sS_s$ that
efficiently $3$-represents $n$.
\item Each pair $(f,C)\in \sS_s$ satisfies $\dft(f,C)\le s$, and hence $\deg
f\le \lfloor s\rfloor$; and if $f$ has degree $0$, the former inequality is
strict.
\end{enumerate}
\end{thm}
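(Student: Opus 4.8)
The plan is to feed the output of the ``building-up'' Theorem~\ref{oldmainthm} into the ``filtering-down'' Theorem~\ref{thmtrunc}. First I would apply Theorem~\ref{oldmainthm} to fix a finite covering set $\sS$ for $B_s$; the point to keep in mind is that this covering is ``with the correct complexity'', i.e.\ every $n\in B_s$ is \emph{efficiently} $3$-represented by some $(f,C)\in\sS$, not merely $3$-represented --- this is the strengthened form of the covering theorem discussed in the introduction. Next, for each $(f,C)\in\sS$, say of degree $r$, I would apply Theorem~\ref{thmtrunc} with the given cutoff $s$ to obtain a finite set $T_{(f,C)}\subseteq(\mathbb{Z}_{\ge0}\cup\{*\})^r$; for each $p\in T_{(f,C)}$ let $(g_p,D_p)$ be the $3$-substitution of $p$ into $(f,C)$, which by Theorem~\ref{thmtrunc}(2) is a truncation of $(f,C)$, hence a low-defect pair, and which satisfies $\dft(g_p,D_p)\le s$, with the inequality strict when $\deg g_p=0$. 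Then I would set
\[ \sS_s := \bigcup_{(f,C)\in\sS}\{(g_p,D_p):p\in T_{(f,C)}\}, \]
a finite union of finite sets, and claim this is the desired set.

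Condition (2) is then immediate: each member of $\sS_s$ arrives from Theorem~\ref{thmtrunc}(2) already satisfying $\dft(g_p,D_p)\le s$ (strict in the degree-$0$ case), and the bound $\deg g_p\le\lfloor s\rfloor$ follows since $\deg g_p\le\dft(g_p)\le\dft(g_p,D_p)\le s$ by Corollary~\ref{polydftbd}, $\deg g_p$ being an integer.

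For condition (1) I would take $n\in B_s$ and let $(f,C)\in\sS$ efficiently $3$-represent it, say $n=f(3^{n_1},\ldots,3^{n_r})$ with $\cpx{n}=C+3(n_1+\cdots+n_r)$. The crucial observation is that under efficiency the ``supposed defect'' is the true defect:
\[ \dft_{f,C}(n_1,\ldots,n_r)=C+3(n_1+\cdots+n_r)-3\log_3 f(3^{n_1},\ldots,3^{n_r})=\cpx{n}-3\log_3 n=\dft(n)<s, \]
so $(n_1,\ldots,n_r)$ lies in the set $S$ of Theorem~\ref{thmtrunc}. Since $S=\bigcup_{p\in T_{(f,C)}}S(p)$, there is some $p=(k_1,\ldots,k_r)\in T_{(f,C)}$ with $(n_1,\ldots,n_r)\in S(p)$, i.e.\ $k_i=n_i$ whenever $k_i\ne*$. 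Writing $i_1<\cdots<i_t$ for the positions where $k_i=*$, the truncation $(g_p,D_p)\in\sS_s$ then satisfies $g_p(3^{n_{i_1}},\ldots,3^{n_{i_t}})=f(3^{n_1},\ldots,3^{n_r})=n$, and, using $D_p=C+3\sum_{k_i\ne*}k_i=C+3\sum_{k_i\ne*}n_i$,
\[ D_p+3(n_{i_1}+\cdots+n_{i_t})=C+3\sum_{k_i\ne*}n_i+3\sum_{k_i=*}n_i=C+3(n_1+\cdots+n_r)=\cpx{n}, \]
so $(g_p,D_p)$ efficiently $3$-represents $n$, as required.

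The one step needing genuine care, rather than pure bookkeeping, is the preservation of \emph{efficient} (correct-complexity) $3$-representation in this last step: truncation only automatically discards representations of defect $\ge s$, so an $n\in B_s$ is guaranteed to survive only if it was represented ``with the correct complexity'' to begin with --- which is precisely why Theorem~\ref{oldmainthm} must be invoked in its strengthened form rather than as the bare covering statement. Everything else is the arithmetic of base complexities, tracking which exponents have been fixed versus left free; the identity $\dft_{f,C}(n_1,\ldots,n_r)=\dft(n)$ and the additivity of $D_p$ displayed above are what make that arithmetic come out right.
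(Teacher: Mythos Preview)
Your proof is correct and follows essentially the same route as the paper: apply Theorem~\ref{oldmainthm} to get an efficient covering, truncate each pair via Theorem~\ref{thmtrunc}, and take the union, with condition~(2) read off directly from Theorem~\ref{thmtrunc}(2) and condition~(1) verified by the bookkeeping identity $\dft_{f,C}(n_1,\ldots,n_r)=\dft(n)$ under efficiency. Your explicit invocation of Corollary~\ref{polydftbd} for the degree bound is harmless but redundant, since that bound is already part of the conclusion of Theorem~\ref{thmtrunc}(2).
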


\begin{proof}
By Theorem~\ref{oldmainthm}, there exists a finite set $\sT_s$ of low-defect
pairs such that for any $n \in B_s$, there is some low-defect pair in $\sT_s$
that efficiently $3$-represents $s$.  (Indeed, by Theorem~\ref{oldmainthm}, we
may even choose $\sT_s$ to only consist of polynomials of degree at most
$\lfloor s \rfloor$, but this is not needed.)

Now for each $(f,C)\in \sT_s$, take $T_{f,C}$ as provided by
Theorem~\ref{thmtrunc}; define $\sT_{f,C}$ to be the set
\[\{ (g,D) : (g,D)\ \textrm{is a $3$-substitution of $p$ into $(f,C)$},\ p\in
\sT_{f,C}\};\]
this is a set of low-defect pairs by condition (2) of Theorem~\ref{thmtrunc}.
We can then define $\sS_s$ to be the union of the $\sT_{f,C}$.  We see
immediately that $\sS$ satisfies condition (2) of the theorem, as this follows
from condition (2) of Theorem~\ref{thmtrunc}.

To verify condition (1), say $n\in B_s$.  Then there is some $(f,C)\in
\sT_s$ that efficiently $3$-represents $n$; say
$n=f(3^{\ell_1},\ldots,3^{\ell_r})$ with $\cpx{n}=C+3(\ell_1+\ldots+\ell_r)$, so
$\dft_{f,C}(\ell_1,\ldots,\ell_r)=\dft(n)<s$.  Then $(\ell_1,\ldots,\ell_r)\in
S(p)$ for some $p\in T_{f,C}$.  Say $p=(k_1,\ldots,k_r)$, and let $i_1,\ldots,
i_s$ be the indices for which $k_i=*$.  Then if we let $(g,D)$ be the
$3$-substitution of $p$ into $(f,C)$, then
$n=f(3^{\ell_1},\ldots,3^{\ell_r})=g(3^{\ell_{i_1}},\ldots,3^{\ell_{i_s}})$, and
$\cpx{n}=C+3(\ell_1+\ldots+\ell_r)=D+3(\ell_{i_1}+\ldots+\ell_{i_s})$, so $n$ is
efficiently $3$-represented by $(g,D)\in \sT_{f,C}\subseteq \sS_s$.
\end{proof}

Note that although such a covering of $B_r$ cannot produce extraneous numbers in
the sense of $3$-representing numbers whose defects are too high, it can still
$3$-represent numbers that are not leaders.

We then obtain Theorem~\ref{mainthmfront} as a corollary:

\begin{proof}[Proof of Theorem~\ref{mainthmfront}]
Given $s$, we may consider a set $\sS_s$ of low-defect pairs as described in
Theorem~\ref{mainthm}.  We may then define $\sT_s$ to be the set of low-defect
polynomials used in these pairs.  Then if $\dft(n)<s$, $n$ is $3$-represented by
$\xpdd{f}$ for some $f\in \sS_s$.  Conversely, if $n$ is $3$-represented by
$\xpdd{f}$ for some $f\in \sS_s$, then either $\deg f>0$, in which case
$\dft(n)<\dft(f)\le s$, or $\deg f=0$, in which case $\dft(n)\le\dft(f)<s$.
\end{proof}

\subsection*{Acknowledgements}
The author is grateful to J.~Arias de Reyna and E.~H.~Brooks for helpful
discussion. He thanks his advisor J.~C.~Lagarias for help with editing and
further discussion.
Work of the author was supported by NSF grants DMS-0943832 and DMS-1101373.

\appendix
\section{Representing closed intervals}
\label{nonstrict}

It's worth noting that the theorems above about $A_r$ and $B_r$, and how to
build up coverings for them, etc., are formulated in terms of $A_r$ and $B_r$,
which are defined by the strict inequality $\dft(n)<r$.  In many contexts,
however, it is more natural to consider the nonstrict inequality $\dft(n)\le r$.
So let us define:

\begin{defn}
For a real number $r\ge 0$, the set $\overline{A}_r$ is the set $\{ n\in\N:
\dft(n)\le r\}$.  The set $\overline{B}_r$ is the set of all elements of
$\overline{A}_r$ which are leaders.
\end{defn}

We can then also define:
\begin{defn}
A finite set $\sS$ of low-defect pairs will be called a \emph{covering set} for
$\overline{B}_r$ if, for every $n\in \overline{B}_r$, there is some low-defect
pair in $\sS$ that efficiently $3$-represents it.
\end{defn}

One can then write down theorems about $\overline{A}_r$ and $\overline{B}_r$
similar to those above and in \cite{paper1} and \cite{paperwo} about $A_r$ and
$B_r$.  We will state them here without proof, as the proofs are the same except
for the strictnesses of some of the inequalities.

\begin{thm}
For any real $0\le \alpha<1$, $\overline{B}_\alpha$ is a finite set.
\end{thm}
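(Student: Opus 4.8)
The plan is to deduce this from the already-established finiteness of $B_\alpha$ for $0<\alpha<1$ (Theorem~\ref{finite}) by a simple sandwiching argument, rather than redoing any estimates. The key observation is that $\overline{B}_\alpha$ and $B_\beta$ differ only in whether the bound on the defect is strict: for any real $\beta>\alpha$ one has $\overline{B}_\alpha\subseteq B_\beta$, since if $n\in\overline{B}_\alpha$ then $n$ is a leader with $\dft(n)\le\alpha<\beta$, whence $n\in A_\beta$ and therefore $n\in B_\beta$.

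Concretely, given $\alpha$ with $0\le\alpha<1$, I would first choose a real number $\beta$ with $\alpha<\beta<1$ — for instance $\beta=(\alpha+1)/2$, which lies in $[1/2,1)\subseteq(0,1)$, so that Theorem~\ref{finite} applies to it. Next I would invoke Theorem~\ref{finite} to conclude that $B_\beta$ is finite. Finally, combining this with the inclusion $\overline{B}_\alpha\subseteq B_\beta$ observed above, $\overline{B}_\alpha$ is a subset of a finite set and hence is itself finite.

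There is essentially no real obstacle in this approach; the only point requiring any care is that $\beta$ must be taken in the open interval $(0,1)$ in order for Theorem~\ref{finite} to be applicable, and this is possible precisely because we are assuming $\alpha<1$. If one instead preferred a self-contained argument not invoking Theorem~\ref{finite}, one could simply reproduce the proof of that theorem from \cite{paper1}, replacing the strict inequality $\dft(n)<\alpha$ by $\dft(n)\le\alpha$ throughout; the estimates there are insensitive to this change, since it is the bound $\alpha<1$, and not the strictness of the defect inequality, that is actually used.
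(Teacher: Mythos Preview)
Your argument is correct. The paper itself does not give a proof of this statement; it simply remarks that the proofs in this appendix ``are the same except for the strictnesses of some of the inequalities,'' i.e., one is meant to rerun the proof of Theorem~\ref{finite} from \cite{paper1} with $\le$ in place of $<$. Your sandwiching argument is a genuinely different and more economical route: rather than reopening any estimates, you deduce the result as an immediate corollary of Theorem~\ref{finite} via the inclusion $\overline{B}_\alpha\subseteq B_\beta$ for any $\beta\in(\alpha,1)$. This buys you a two-line proof and avoids any dependence on the internals of the argument in \cite{paper1}; the paper's implicit approach, by contrast, has the minor advantage of being self-contained (and you correctly note this alternative as well). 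Either way the key constraint is the same: one needs $\alpha<1$ so that there is room to choose such a $\beta$ (or so that the original estimates apply), and you handle this point correctly.
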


\begin{thm}
Suppose that $0< \alpha <1$ and that $k\ge1$.
Then any $n\in \overline{B}_{(k+1)\alpha}$ can be most-efficiently
represented in (at least) one of the following forms:
\begin{enumerate}
\item
For $k=1$,
there is either a good factorization $n=u\cdot v$ where
$u,v\in \overline{B}_\alpha$, or a good factorization $n=u\cdot v\cdot w$ with
$u,v,w\in \overline{B}_\alpha$; \\
For $k \ge 2$, there is a good factorization $n=u \cdot v$ where $u\in
\overline{B}_{i\alpha}$,
$v\in \overline{B}_{j\alpha}$ with $i+j=k+2$ and $2\le i, j\le k$.
\item $n=a+b$ with $\cpx{n}=\cpx{a}+\cpx{b}$, $a\in \overline{A}_{k\alpha}$,
$b\le a$ a solid number and
\[\dft(a)+\cpx{b}\le(k+1)\alpha+3\log_3 2.\]
\item There is a good factorization $n=(a+b)v$ with $v\in \overline{B}_\alpha$,
$a+b$ being a most-efficient representation, and $a$ and $b$ satisfying the
conditions in the case (2) above.
\item $n\in T_\alpha$, where $T_\alpha$ is as defined in \cite{paper1} (and thus
in particular either $n=1$ or $\cpx{n}=\cpx{n-1}+1$.)
\item There is a good factorization $n = u\cdot v$ with $u\in T_\alpha$ and
$v\in \overline{B}_\alpha$.
\end{enumerate}
\end{thm}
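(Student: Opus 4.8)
The plan is to transcribe the proof of the corresponding theorem for $B_{(k+1)\alpha}$ from \cite{paper1}, replacing the hypothesis $\dft(n)<(k+1)\alpha$ by $\dft(n)\le(k+1)\alpha$ and relaxing the strictnesses of the intermediate defect bounds to match. First I would recall the structural input used there: for a leader $n>1$, a most-efficient representation of $n$ is either a sum $n=a+b$ with $\cpx{n}=\cpx{a}+\cpx{b}$, or a product admitting a good factorization; and a good factorization $n=uv$ satisfies $\dft(n)=\dft(u)+\dft(v)$, with the analogous additive identity for three factors. Since $\dft\ge 0$ always, these identities are exactly what let one apportion the defect budget $(k+1)\alpha$ among the factors, and since every inequality involved is monotone in that budget, the same apportionment goes through with $\le$ in place of $<$, landing the pieces in $\overline{B}_{i\alpha}$ and $\overline{A}_{k\alpha}$ rather than in $B_{i\alpha}$ and $A_{k\alpha}$.

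Next I would run through the case analysis of \cite{paper1} one case at a time. In the product case (1), the combinatorial step that regroups a two-factor good factorization into a three-factor one, when neither original factor has small enough defect, uses only additivity of the defect over good factorizations together with $\alpha<1$; each bound $\dft(u)<j\alpha$ there becomes $\dft(u)\le j\alpha$, and the split into two- versus three-factor subcases for $k=1$ transcribes directly. In the sum case (2), the estimate controlling $\dft(a)+\cpx{b}$ by $(k+1)\alpha+3\log_3 2$ --- forcing $b$ to be a solid number with $a\in\overline{A}_{k\alpha}$ --- is again monotone in the input hypothesis, so a non-strict hypothesis yields the stated non-strict conclusion. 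The mixed case (3) combines (1) and (2), while cases (4) and (5) involving $T_\alpha$ transcribe verbatim, now invoking the non-strict finiteness statement ``$\overline{B}_\alpha$ is finite for $0\le\alpha<1$'' stated just above; the degenerate value $n=1$ is handled as in \cite{paper1}.

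The one point to verify carefully --- bookkeeping rather than a new idea --- is that strictness was genuinely essential in \cite{paper1} at only one place: it is what forces the constraint $\alpha<1$, via finiteness of the base-case sets. Since finiteness of $\overline{B}_\alpha$ for $0\le\alpha<1$ remains available, the recursion still bottoms out, and the only modification propagating through every case is the weakening of each relevant ``$<$'' to ``$\le$''. So I do not expect any genuinely new argument to be needed; the main obstacle is simply the discipline of checking, case by case, exactly which inequalities must become non-strict and confirming that nothing downstream relied on their being strict.
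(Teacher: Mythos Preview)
Your proposal is correct and matches the paper's own treatment exactly: the paper states this theorem without proof, noting only that the proofs in the appendix ``are the same except for the strictnesses of some of the inequalities,'' which is precisely the transcription-and-relax-strictness strategy you outline. If anything, your case-by-case discussion is more thorough than what the paper provides.
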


(Note here that we can use the same $T_\alpha$ from \cite{paper1} with no
alterations.)

\begin{thm}
For any real $r\ge 0$, there exists a finite covering set $\sS_r$ for
$\overline{B}_r$.  Furthermore, we can choose $\sS_r$ such that each $(f,C)\in
\sS_r$ has degree at most $\lfloor r \rfloor$.
\end{thm}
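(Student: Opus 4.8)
The plan is to deduce this quickly from Theorem~\ref{oldmainthm} rather than redo the inductive build-up from \cite{paper1} and \cite{paperwo}. The key observation is the trivial inclusion $\overline{B}_r\subseteq B_{r'}$ whenever $r'>r$: if $n$ is a leader with $\dft(n)\le r<r'$, then $n$ is a leader with $\dft(n)<r'$, so $n\in B_{r'}$. Consequently any covering set for $B_{r'}$ is automatically a covering set for $\overline{B}_r$, since being efficiently $3$-represented by a pair in $\sS$ is a property of $n$ alone and does not refer to which set $n$ belongs to.

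First I would choose a real number $r'$ with $r<r'<\floor{r}+1$; this is always possible (when $r$ is an integer, any $r'\in(r,r+1)$ works), and it guarantees $\floor{r'}=\floor{r}$. Then I would apply Theorem~\ref{oldmainthm} to $r'$ to obtain a finite covering set $\sS_{r'}$ for $B_{r'}$, chosen so that every $(f,C)\in\sS_{r'}$ has degree at most $\floor{r'}=\floor{r}$. Since $\overline{B}_r\subseteq B_{r'}$, every $n\in\overline{B}_r$ is efficiently $3$-represented by some pair in $\sS_{r'}$, so $\sS_r:=\sS_{r'}$ is the desired finite covering set for $\overline{B}_r$, and it already satisfies the stated degree bound.

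The only point requiring any care is arranging the degree bound $\floor{r}$ rather than $\floor{r}+1$, and that is handled entirely by the choice of $r'$; so I expect there to be essentially no real obstacle here. For completeness one could instead give a direct proof that parallels the proof of Theorem~\ref{oldmainthm}: take $\alpha\in(0,1)$ arbitrarily close to $1$, use the finiteness of $\overline{B}_\alpha$ as the base case and the structure theorem for $\overline{B}_{(k+1)\alpha}$ (the two results stated just above) as the inductive step, and build up coverings for $\overline{B}_\alpha,\overline{B}_{2\alpha},\ldots$ exactly as in \cite{paperwo}, replacing strict inequalities by non-strict ones where appropriate. In that approach the main bookkeeping burden would be tracking the strictness conditions and the degree bounds consistently through the recursion; the reduction above sidesteps this entirely, which is why I would present it as the proof.
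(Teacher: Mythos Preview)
Your proposal is correct, and it takes a genuinely different route from the paper's. The paper does not actually write out a proof here; it simply declares that the proofs of the appendix results ``are the same except for the strictnesses of some of the inequalities,'' i.e., one is meant to rerun the inductive build-up behind Theorem~\ref{oldmainthm} with $\le$ in place of $<$ throughout. Your argument instead reduces directly to the already-proved strict case via the inclusion $\overline{B}_r\subseteq B_{r'}$, together with the observation that one can always pick $r'$ in the open interval $(r,\lfloor r\rfloor+1)$ so that $\lfloor r'\rfloor=\lfloor r\rfloor$, which preserves the degree bound. This is shorter and avoids any new bookkeeping. The paper's approach, by contrast, yields the whole suite of non-strict analogues (the finiteness of $\overline{B}_\alpha$, the structure theorem for $\overline{B}_{(k+1)\alpha}$, etc.) in one uniform pass; your reduction handles this particular statement cleanly but does not by itself deliver those companion results.
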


\begin{thm}
Let $(f,C)$ be a low-defect pair, say of degree $r$, let $s\ge 0$ be a real
number, and let $S=\{ (k_1,\ldots,k_r) : \dft_{f,C}(k_1,\ldots,k_r) \le s\}$.
Then there exists a finite set $T\subseteq (\mathbb{Z}_{\ge0} \cup \{*\})^r$
such that:
\begin{enumerate}
\item We have $S = \bigcup_{p\in T} S(p)$.
\item For each $p$ in $T$, the set of $i$ for which $k_i\ne *$ corresponds to a
subset of the variables of $f$ which is downward closed (under the nesting
ordering); hence if $(g,D)$ denotes the $3$-substitution of $p$ into $(f,C)$,
then $(g,D)$ is a truncation of $(f,C)$.  Furthermore, we have $\dft(g,D)\le s$,
and hence $\deg g\le \lfloor s\rfloor$.
\end{enumerate}
\end{thm}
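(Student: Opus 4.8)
The plan is to repeat the proof of Theorem~\ref{thmtrunc} essentially verbatim, replacing the strict inequality $\dft_{f,C}(k_1,\ldots,k_r)<s$ by $\dft_{f,C}(k_1,\ldots,k_r)\le s$ throughout and simply omitting the degree-zero strictness clause from the conclusion; that clause is in fact the only place where the change of inequality interacts with the argument. One inducts on $r$ again. For $r=0$, $f$ is a constant $n$ with $\dft_{f,C}()=\dft(f,C)$, and one takes $T=\{()\}$ when $s\ge\dft(f,C)$ and $T=\emptyset$ when $s<\dft(f,C)$. The sole difference from Theorem~\ref{thmtrunc} is that the borderline case $s=\dft(f,C)$ now falls into the first branch rather than the second, so here one obtains only $\dft(g,D)\le s$ rather than a strict inequality — which is exactly why the degree-zero strictness clause is correctly dropped from the nonstrict statement.

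For the inductive step with $r>0$, the case $s\ge\dft(f,C)$ is unchanged: by Proposition~\ref{dftbd}(2) every value $\dft_{f,C}(k_1,\ldots,k_r)$ is strictly below $\dft(f,C)\le s$, so $S=\mathbb{Z}_{\ge0}^r$ and $T=\{(*,\ldots,*)\}$ works, with $\dft(g,D)=\dft(f,C)\le s$. The substantive case is $r>0$ and $s<\dft(f,C)$, and here the one thing requiring thought is a nonstrict analogue of Corollary~\ref{dirtrunc2}: a bound $K$ such that $\dft_{f,C}(k_1,\ldots,k_r)\le s$ forces $k_i\le K$ for some minimal variable $x_i$. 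This follows from Proposition~\ref{minlimit} exactly as the strict version does, precisely because $s$ lies \emph{strictly} below $\dft(f,C)=\lim_{\text{minimal}\to\infty}\dft_{f,C}$: there is a gap, so $K$ can be chosen so that $\dft_{f,C}$ at the corner point (all minimal variables set to $K+1$, the rest to $0$) \emph{strictly} exceeds $s$, and then strict monotonicity of $\dft_{f,C}$ in each variable (Proposition~\ref{supdfts}) forces some minimal coordinate of any tuple with $\dft_{f,C}\le s$ to be at most $K$.

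Granting this, the rest is copied from Theorem~\ref{thmtrunc}: form $T_0$ from the finitely many direct truncations fixing a single minimal variable to a value $\le K$; apply the inductive hypothesis (nonstrict form, same $s$) to each resulting degree-$(r-1)$ pair $(g_p,D_p)$ to get $T_p$; pull back to $T'_p$; and set $T=\bigcup_{p\in T_0}T'_p$. Downward-closedness of the index sets is checked as before (adjoining a minimal variable to a downward-closed set keeps it so); the inclusion $S\subseteq\bigcup_{p\in T}S(p)$ transfers the hypothesis one level down via Proposition~\ref{dirtrunc}(4); and for $S\supseteq\bigcup_{p\in T}S(p)$ together with $\dft(g,D)\le s$, one again combines Proposition~\ref{dirtrunc}(4) with Proposition~\ref{dftbd}(2), observing that $\dft_{g,D}(\cdots)\le\dft(g,D)\le s$ holds whether or not $\deg g=0$. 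The bound $\deg g\le\lfloor s\rfloor$ then falls out of Corollary~\ref{polydftbd}, since $\deg g\le\dft(g,D)\le s$ and $\deg g\in\mathbb{Z}$. I do not expect a genuine obstacle: the only non-mechanical point is the nonstrict form of Corollary~\ref{dirtrunc2}, and it goes through exactly because the defining condition for $S$ is $\le s$ with $s<\dft(f,C)$ strict.
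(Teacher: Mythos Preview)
Your proposal is correct and is exactly the approach the paper takes: the appendix states this theorem without proof, remarking only that ``the proofs are the same except for the strictnesses of some of the inequalities.'' You have correctly identified the one place requiring genuine care---the nonstrict analogue of Corollary~\ref{dirtrunc2}---and your justification via the strict gap $s<\dft(f,C)$ together with Proposition~\ref{minlimit} and the monotonicity from Proposition~\ref{supdfts} is sound.
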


\begin{thm}
For any real $s\ge 0$, there exists a finite set $\sS_s$ of low-defect pairs
satisfying the following conditions:
\begin{enumerate}
\item For any $n\in \overline{B}_s$, there is some low-defect pair in $\sS_s$
that efficiently $3$-represents $n$.
\item Each pair $(f,C)\in \sS_s$ satisfies $\dft(f,C)\le s$, and hence $\deg
f\le \lfloor s\rfloor$.
\end{enumerate}
\end{thm}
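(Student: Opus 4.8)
The plan is to imitate the proof of Theorem~\ref{mainthm} line by line, replacing the strict inequality $\dft(n)<s$ by $\dft(n)\le s$ everywhere and substituting the nonstrict analogues of the two ingredients stated above — the covering theorem for $\overline{B}_s$ in place of Theorem~\ref{oldmainthm}, and the nonstrict truncation theorem in place of Theorem~\ref{thmtrunc}. First I would invoke the nonstrict covering theorem to obtain a finite set $\sT_s$ of low-defect pairs such that every $n\in\overline{B}_s$ is efficiently $3$-represented by some pair in $\sT_s$. Then, for each $(f,C)\in\sT_s$, say of degree $r$, I would apply the nonstrict truncation theorem with cutoff $s$ to the set $S=\{(k_1,\ldots,k_r):\dft_{f,C}(k_1,\ldots,k_r)\le s\}$, obtaining a finite $T_{f,C}\subseteq(\mathbb{Z}_{\ge0}\cup\{*\})^r$ with $S=\bigcup_{p\in T_{f,C}}S(p)$ and, for each $p\in T_{f,C}$, the $3$-substitution $(g,D)$ of $p$ into $(f,C)$ a truncation of $(f,C)$ satisfying $\dft(g,D)\le s$. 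Setting $\sT_{f,C}$ to be the finite set of these $3$-substitutions and $\sS_s:=\bigcup_{(f,C)\in\sT_s}\sT_{f,C}$ produces a finite set of low-defect pairs, which I claim works.

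Verifying the two conditions is then routine. Condition (2) is immediate from condition (2) of the nonstrict truncation theorem: every element of $\sS_s$ is one of the pairs $(g,D)$ above, with $\dft(g,D)\le s$, and then $\deg g\le\lfloor s\rfloor$ by Corollary~\ref{polydftbd}. For condition (1), let $n\in\overline{B}_s$; it is efficiently $3$-represented by some $(f,C)\in\sT_s$, say $n=f(3^{\ell_1},\ldots,3^{\ell_r})$ with $\cpx{n}=C+3(\ell_1+\ldots+\ell_r)$, so that $\dft_{f,C}(\ell_1,\ldots,\ell_r)=\dft(n)\le s$ and hence $(\ell_1,\ldots,\ell_r)\in S(p)$ for some $p\in T_{f,C}$. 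Writing $p=(k_1,\ldots,k_r)$ with $k_i=*$ exactly for $i$ in some index set $\{i_1,\ldots,i_t\}$, the associated $3$-substitution $(g,D)$ then satisfies $n=g(3^{\ell_{i_1}},\ldots,3^{\ell_{i_t}})$ and $\cpx{n}=D+3(\ell_{i_1}+\ldots+\ell_{i_t})$, so $(g,D)\in\sT_{f,C}\subseteq\sS_s$ efficiently $3$-represents $n$.

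All the content is thus pushed into the two cited ingredients, whose proofs — as the appendix remarks — differ from those of Theorem~\ref{oldmainthm} and Theorem~\ref{thmtrunc} only in the strictness of certain inequalities; I do not expect a genuine obstacle, only bookkeeping. The one spot worth attention is the induction inside the nonstrict truncation theorem. In the base case $r=0$ one now only gets $\dft(g,D)=\dft(f,C)\le s$ rather than the strict inequality of Theorem~\ref{thmtrunc}, which is exactly what the weakened condition (2) allows, so the induction still closes. And in the inductive step one needs the nonstrict form of Corollary~\ref{dirtrunc2}: when $s<\dft(f,C)$ there is a $K$ such that $\dft_{f,C}(k_1,\ldots,k_r)\le s$ forces $k_i\le K$ for some variable $x_i$ minimal in the nesting ordering. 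This follows from Proposition~\ref{minlimit} exactly as before, choosing $K$ so that the value of $\dft_{f,C}$ with every minimal $k_i$ equal to $K+1$ and the remaining $k_i$ equal to $0$ is strictly greater than $s$ — possible because $\dft(f,C)$ is, by Propositions~\ref{dftbd} and \ref{minlimit}, a strict upper bound for $\dft_{f,C}$ that is approached as the minimal variables tend to infinity — and then using that $\dft_{f,C}$ is strictly increasing in each variable.
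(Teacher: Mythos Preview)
Your proposal is correct and is precisely the approach the paper indicates: the appendix states this theorem without proof, remarking only that the argument is the same as for Theorem~\ref{mainthm} with the strictness of certain inequalities adjusted, and you have carried out exactly that adaptation, including correctly handling the two places (the $r=0$ base case of truncation and the nonstrict form of Corollary~\ref{dirtrunc2}) where the bookkeeping changes.
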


\end{document}